\pgfplotsset{compat=newest}
\definecolor{almond}{rgb}{0.99, 0.99, 0.99}
\newtheorem{theorem}{Theorem}[section]
\newtheorem{lemma}[theorem]{Lemma}
\newtheorem{definition}[theorem]{Definition}
\newtheorem{remark}[theorem]{Remark}
\newtheorem{proposition}[theorem]{Proposition}
\newtheorem{corollary}[theorem]{Corollary}
\newtheorem{example}[theorem]{Example}
\newcommand{\N}{\mathbb{N}}
\newcommand{\R}{\mathbb{R}}
\newcommand{\e}{\varepsilon}
\newcommand{\C}{{\mathfrak C}}
\newcommand{\weakto}{\rightharpoonup}
\newcommand{\ra}{\rightarrow}
\newcommand{\Hau}{{\mathcal H}} 
\newcommand{\Leb}{{\mathcal L}} 
\newcommand{\Id}{\mathrm{Id}} 
\newcommand{\de}{\partial}
\newcommand{\dist}{\mathop{\mathrm{dist}}}
\newcommand{\Hdist}{\mathop{\mathrm{dist}_{\mathcal{H}}}}
\newcommand{\spt}{\mathop{\mathrm{spt}}}
\newcommand{\loc}{\mathop{\mathrm{loc}}}
\renewcommand{\div}{\mathop{\mathrm{div}}}
\newcommand{\difsim}{\Delta}
\newcommand{\ch}{\mathbf{1}}
\newcommand{\restrict}{\mathbin{\vrule height 1.4ex depth 0pt width
		0.13ex\vrule height 0.13ex depth 0pt width 1.3ex}\,}
\newcommand{\cA}{\mathcal{A}}
\newcommand{\cC}{\mathcal{C}}
\newcommand{\cI}{\mathcal{I}}
\newcommand{\cL}{\mathcal{L}}
\renewcommand{\Subset}{\subset\!\subset}
\newcommand{\Lip}{\mathop{\mathrm{Lip}}}
\newcommand{\pr}{P_{\Omega}}
\newcommand{\p}{P}
\newcommand{\dd}{d}
\newcommand{\Tr}{{\mathrm{Tr}}}
\definecolor{grey}{rgb}{.7,.7,.7}
\definecolor{evidGP}{rgb}{0,0,1}
\definecolor{evidG}{rgb}{0,0.5,0}
\definecolor{almond}{rgb}{0.99, 0.99, 0.99}
\author{Gian Paolo Leonardi}
\address{Dipartimento di Matematica, via Sommarive 14, IT-38123 Povo - Trento (Italy)}
\email{gianpaolo.leonardi@unitn.it}
\author{Giacomo Vianello}
\address{Dipartimento di Matematica, via Trieste 63, IT-35121 - Padova (Italy)}
\email{giacomo.vianello@unipd.it}
\thanks{G.P.Leonardi has been partially supported by: PRIN 2017TEXA3H ``Gradient flows, Optimal Transport and Metric Measure Structures''; PRIN 2022PJ9EFL ``Geometric Measure Theory: Structure of Singular Measures, Regularity Theory and Applications in the Calculus of Variations'' (financed by European Union - Next Generation EU, Mission 4, Component 2 - CUP:E53D23005860006); Grant PID2020-118180GB-I00 ``Geometric Variational Problems''. Giacomo Vianello has been supported by GNAMPA (INdAM) Project 2023: ``Esistenza e propriet\`a fini di forme ottime''.}
\subjclass[2020]{Primary: 49Q05. Secondary: 49Q10}
\keywords{relative perimeter, almost-minimizers, monotonicity, boundary regularity}
\title[Free-boundary monotonicity for almost-minimizers]{Free-boundary monotonicity for almost-minimizers of the relative perimeter}
\begin{document}

\begin{abstract}
Let $E \subset \Omega$ be a local almost-minimizer of the relative perimeter in the open set $\Omega\subset \R^{n}$. We prove a free-boundary monotonicity inequality for $E$ at a point $x\in \de\Omega$, under a geometric property called ``visibility'', that $\Omega$ is required to satisfy in a neighborhood of $x$. Incidentally, the visibility property is satisfied by a considerably large class of Lipschitz and possibly non-smooth domains. Then, we prove the existence of the density of the relative perimeter of $E$ at $x$, as well as the fact that any blow-up of $E$ at $x$ is necessarily a perimeter-minimizing cone within the tangent cone to $\Omega$ at $x$.
\end{abstract}

\maketitle
\tableofcontents

\section{Introduction}

Consider an open set $\Omega\subset \R^{n}$ with Lipschitz boundary, and fix $x_{0}\in \de \Omega$. The main goal of this work is to prove a free-boundary monotonicity inequality for a local almost-minimizer $E\subset \Omega$ of the relative perimeter at the boundary point $x_{0}$, under a suitable geometric property of  $\de\Omega$ near $x_{0}$, which we call ``visibility''.

Monotonicity inequalities are key tools in the regularity theory for minimizers and almost-minimizers of the area functional. In the prototypical setting of $E$ being a local perimeter minimizer near $0\in \Omega$ (or even just a critical point of the perimeter functional) and given $0<r_{1}<r_{2}$ such that the ball $B_{r_{2}}$ of radius $r_{2}$ and center $0$ is contained in $\Omega$, it is known that 
\begin{equation} \label{eq:M}
\int_{(B_{r_{2}} \setminus B_{r_{1}})\cap \de^{*}E} \frac{\left\langle x, \nu_{E} \right\rangle^{2}}{|x|^{n+1}} d \Hau^{n-1}(x) \leq \frac{\p(E;B_{r_{2}})}{r_{2}^{n-1}} - \frac{\p(E;B_{r_{1}})}{r_{1}^{n-1}}\, ,
\end{equation}
where $\p(E;B_{r})$ is the perimeter of $E$ in $B_{r}$, $\Hau^{n-1}$ is the Hausdorff $(n-1)$-dimensional measure in $\R^{n}$, $\de^{*}E$ is the reduced boundary of $E$, and $\nu_{E}$ is the weak interior normal defined for $\Hau^{n-1}$-almost every $x\in \de^{*}E$. 

The first, fundamental consequences of an inequality like \eqref{eq:M} are:
\begin{itemize}
\item[(i)] the monotonicity of the renormalized perimeter $r\mapsto \frac{\p(E;B_{r})}{r^{n-1}}$, which turns out to be a non-decreasing function admitting a finite limit as $r\to 0$ (denoted as $\theta_{E}(0)$, the perimeter density of $E$ at $0$);

\item[(ii)] the fact that, if a perimeter minimizer has a constant renormalized perimeter, then the left-hand side of \eqref{eq:M} vanishes for all $0<r_{1}<r_{2}$ and consequently $E$ coincides up to null sets with a cone with vertex at the origin.
\end{itemize} 

These two facts allow the application of the monotonicity formula in several steps of the proof of the $C^{1,\alpha}$-regularity of the reduced boundary $\de^{*}E$, as well as in the analysis of the singular set $\de E\setminus \de^{*}E$. After the pioneering work of De Giorgi \cite{DeGiorgi1961} on the partial regularity of local perimeter minimizers, the internal regularity theory has been successfully extended to (rectifiable) sets of varying topological type \cite{reifenberg1964analyticity}, area-minimizing integral currents \cite{federer1960normal, almgren2000BigRegPaper, DeLellisSpadaro2014regularity-I,DeLellisSpadaro2016regularity-II,DeLellisSpadaro2016regularity-III}, and varifolds \cite{almgren1968existence,allard1972first}. In the codimension-$1$ case, in particular, the dimension of the singular set is sharply estimated as $\le n-8$, after the work of Bombieri-De Giorgi-Giusti \cite{BombieriDeGiorgiGiusti1969} combined with a dimension-reduction argument due to Federer \cite{simon1984lectures}. In these works, monotonicity formulas arise in specific but substantially equivalent forms, through proofs based either on comparison with cones or by testing the first variation of area with suitable radially-symmetric vector fields. 

A key feature of regularity theory is its stability under perturbations that, at small scales, have a higher order of infinitesimality than area. This has led to the extension of the regularity theory to wider classes of almost-minimizers, including for instance minimizers of isoperimetric or prescribed mean curvature problems (see, e.g., \cite{Massari1974,SchoenSimon1981,Bombieri1982,GonzalezMassariTamanini1983,tamanini1982boundaries}).

Coming to the boundary properties of almost-minimizers, we record regularity results proved for integral currents minimizing parametric elliptic integrals and $k$-varifolds with mean curvature in $L^{p}$ for $p>k$, assuming $C^{1,\alpha}$ regularity of their boundary \cite{allard1975boundary,DuzaarSteffen2002,Bourni2016}. In the free-boundary case, regularity results have been proved for area-minimizing currents and varifolds when the domain $\Omega$ is either of class $C^{2}$ \cite{HardtSimon1979,GruterJost1986allard,Gruter1987regularity,Jost1986} or, more recently, a local $C^{2}$-deformation of a wedge-type polyhedral cone \cite{EdelenLi2022}. In these cases, monotonicity properties of the renormalized area are shown by testing with locally constructed, almost-radial vector fields that are smooth and tangent to $\de \Omega$.

The background motivation of this paper is the study of the free-boundary properties and regularity of perimeter almost-minimizers when the boundary of the domain $\Omega$ is Lipschitz but not necessarily smooth. We aim to consider $\Omega$ that might not be locally of class $C^{1,\alpha}$ or coincide with a smooth deformation of a polyhedral wedge (as assumed in \cite{EdelenLi2022}). Carrying over the full regularity program in such a general setting seems particularly challenging. For this reason, in this paper, we focus on the monotonicity property as a preliminary step towards the extension of the free-boundary regularity results mentioned so far. 

\subsection{Description of the main definitions and results}
Let $\Omega \subset \R^n$ be an open set with Lipschitz boundary and $E \subset \Omega$ be a measurable set. In what follows, $E$ will be called a local almost-minimizer of the relative perimeter in $\Omega$ if, for any $x\in \overline{\Omega}$ there exists $r_{x} > 0$ such that, for any $0 < r < r_{x}$ and any measurable $F\subset\Omega$ with $F \difsim E \subset \subset B_{r}(x)$, one has
\begin{equation*}
	\p(E;B_{r}(x) \cap \Omega) \leq \p(F;B_{r}(x) \cap \Omega) + |F \difsim E|^{\frac{n-1}{n}} \psi_{\Omega}(E;x,r) \, ,
\end{equation*}
for a suitable function $\psi_{\Omega}(E;x,r)$ such that $\lim_{r\to 0^{+}}\psi_{\Omega}(E;x,r) = 0$. Of course, when the error term $|F \difsim E|^{\frac{n-1}{n}} \psi_{\Omega}(E;x,r)$ vanishes, we have a local perimeter minimizer. We conveniently introduce the following, classical notation. Given a function $f \in BV_{\loc}(\Omega)$\footnote{Here we mean that $f \in L^{1}_{\loc}(\Omega)$ and has bounded variation in $\Omega \cap A$, for any bounded set $A \subset \R^n$.}, we define the \textit{minimality gap} of $f$ in $A$ as
\begin{equation*} 
\Psi_{\Omega}(f;A) = |Df(\Omega \cap A)| - \inf\big\{ |Dg|(\Omega \cap A) \, : \, g \in BV_{\loc}(\Omega),\ \spt(g-f)\Subset A \big\} 
\end{equation*}
and, when $f = \ch_E$, we set $\Psi_{\Omega}(E;A) := \Psi_{\Omega}(\ch_E;A)$. If $E$ is an almost-minimizer and $x \in \overline{\Omega}$, it is immediate to check that $\Psi_{\Omega}(E;B_r(x)) \lesssim r^{n-1} \psi_\Omega(E;x,r)$. The two main applications of Theorem \ref{thm:MonotonicityInequality} (our general monotonicity inequality) are Corollary \ref{cor:BM} and Theorem \ref{thm:conblowup}. For them, we will need to assume that the minimality gap on balls centered at $0 \in \de \Omega$ decays to $0$ suitably fast as $r \to 0^+$, which is expressed through the following summability property:
\begin{equation} \label{eq:decaymingap}
\int_{0}^{R}\dfrac{\Psi_{\Omega}(E;B_{\rho})}{\rho^n}\, d\rho \simeq  \int_{0}^{R}\dfrac{\psi_{\Omega}(E;0,r)}{\rho}\, d\rho <+\infty\, .
\end{equation}
Several notions of almost-minimality are available in the literature. Among those where the minimality error is in additive form, like ours, we mention the well-known $\Lambda$-minimality \cite{Ambrosio_Introduttivo_00,maggi2012sets}, where the minimality error is estimated by $\Lambda |F \difsim E|$ for some $\Lambda>0$, which is a special case of our definition. We also remark that the definition of almost-minimality given by Tamanini in \cite{Tamanini_quadLecce}\footnote{in Tamanini's definition, the term $|F \difsim E|^{\frac{n-1}{n}}$ is replaced by $r^{n-1}$.} is slightly more general than ours, even though we take direct inspiration from it. The reason why we are forced to consider a slightly stronger almost-minimality is that, in the boundary case, it seems impossible to use the monotonicity inequality to prove lower-density estimates for the perimeter and the volume of $E$ at $x\in \de\Omega$, as it happens for the interior case. On the other hand, a local almost-minimizer in our sense turns out to fulfill volume and perimeter density estimates directly (see Lemma \ref{lem:density}). We point out that these estimates are crucially used in the proof of Theorem \ref{thm:conblowup}. 

The key assumption needed for the proof of our monotonicity formula is the aforementioned visibility property, that we now briefly describe. Let $\Omega \subset \R^n$ be an open set with Lipschitz boundary such that $0 \in \de \Omega$ and in graphical form around $0$ with respect to the variables $x' = (x_1,...,x_{n-1})$, so that $\Omega$ is locally the epigraph of some Lipschitz function of $x'$. We say that $\Omega$ satisfies the visibility property at $0$ if there exist $R > 0$ and a function $v \in C^1([0,R))$ such that:
\begin{itemize}
	\item[(V1)] $v(0)=v'(0)=0$ and $0\le v' \le 2^{-1}$;
	\item[(V2)] The function
	\[
	\gamma_{v}(r) := r^{-1}\sup_{0<s\le r} \sqrt{\frac{v(s)}{s}+v'(s)}
	\]
	is summable on $(0,R)$;
	\item[(V3)] for all $0 < r < R$, the segment joining the point $V_r = (0,\dots,0,-v(r))$ with a point $x$ belonging to $\de \Omega \cap B_{r}$ does not intersect $\Omega$.
\end{itemize}
The visibility property allows us to construct a quasi-conical competitor for a local almost-minimizer of the relative perimeter in $\Omega$, which is a key step in the proof of our monotonicity inequality. It is worth observing that this property guarantees the existence of the tangent cone to $\Omega$ at $0$ (Proposition \ref{prop:tangcone}). 
The visibility property at $0$ is satisfied for instance by cones with vertex at $0$ (with the trivial choice $v \equiv 0$), by $C^{1,\beta}$ open sets and also by convex sets suitably approximated by their tangent cone at $0$; one can also easily construct examples of Lipschitz domains that do not satisfy the property, (see Section \ref{sec:examples} for more details).
Another consequence of the visibility property is the existence of a foliation of a neighborhood of $0$ by spheres with varying centers, which correspond to the level-sets of a function $\phi$ of class $C^1$, i.e., such that $\phi^{-1}(r) = \de B_r(V_r)$. Moreover, by (V1) and (V2) one can show that $\phi$ is $C^1$-close to the function $|x|$ (see Lemma \ref{lem:coarea}). 

In our main result, Theorem \ref{thm:MonotonicityInequality}, we establish the following monotonicity inequality for $f\in BV_{\loc}(\Omega)$, under the visibility property:
\begin{align} \label{eq:MonotIntro}
	& \left( \int_{\Omega \cap \cA_{r_{1},r_{2}}} \phi^{1-n} \left|\left\langle \nu_f, \nabla \phi \right\rangle \right| \, d |Df| \right)^{2} \leq 2\left( \int_{\Omega \cap \cA_{r_{1},r_{2}}} \frac{|\nabla \phi(x)|}{\phi(x)^{n-1}}\, d |Df| \right)\nonumber \\
	& \qquad \qquad \qquad \cdot \bigg[\mu_{f}(r_{2}) - \mu_{f}(r_{1}) + \int_{r_{1}}^{r_{2}} \dfrac{n-1}{\rho^{n}} \, \Psi_{\Omega}(f;B_{\rho}(V_{\rho})) d \rho + G(f;r_1,r_{2}) \bigg] \, .
\end{align}
Here, $0<r_{1}<r_{2}$ are sufficiently small, $\cA_{r_{1},r_{2}} = B_{r_{2}}(V_{r_{2}})\setminus B_{r_{1}}(V_{r_{1}})$, $\nu_f$ represents the Radon-Nikodym derivative of $Df$ with respect to its total variation $|Df|$, 
\[
\mu_f(r) = \frac{|Df|(\Omega \cap B_r(V_r))}{r^{n-1}}\,,
\]
and $G(f;r_{1},r_{2})$ is an error term whose precise expression is given in \eqref{eq:errtermfBV}, which depends on the properties of the boundary of $\Omega$, and goes to $0$ when $r_{2}\to 0$ as soon as
\begin{equation} \label{eq:densbd}
	\mu_f(r) \leq C \, , \qquad \text{for all $r>0$ small enough and for some $C > 0$.}
\end{equation}
Inequality \eqref{eq:MonotIntro} has important consequences when $f = \ch_E$ and $E$ is a local almost-minimizer of the relative perimeter in $\Omega$ satisfying the assumption \eqref{eq:decaymingap}.
Indeed, in this case, the above-mentioned density estimates guarantee that \eqref{eq:densbd} holds. We then infer that the function
\begin{equation*}
	\int_{0}^{r} \dfrac{n-1}{\rho^{n}} \, \Psi_{\Omega}(f;B_{\rho}(V_{\rho})) d \rho + G(f;0,r)
\end{equation*}
is infinitesimal as $r \to 0^+$. Consequently, the density ratio $\mu_f(r)$ is almost-monotone, and hence it admits a finite limit $\theta_E(0)$ as $r \to 0^+$ (see Corollary \ref{cor:BM}). 

Finally, the left-hand side of \eqref{eq:MonotIntro} can be interpreted as an approximate conical deviation term, since it is as small as $E$ is close to being a cone with vertex at $0$ (at least when $\phi(x) = |x|$, i.e. if the visibility property at $0$ is satisfied with $v \equiv 0$). This observation is exploited in the proof of the last Theorem \ref{thm:conblowup}, where we show that any blow-up sequence of an almost-minimizer of the relative perimeter in $\Omega$ admits a subsequence that converges to a minimizing cone in the tangent cone to $\Omega$ at $0$.
%
%
%
%

\section{Preliminaries}
Let $n \in \mathbb{N}$, $n \geq 2$. For $i = 1,...,n$, we denote by $e_{i}$ the $i$-th vector of the standard basis of $\R^{n}$. Given $x$, $y \in \R^{n}$, their Euclidean product is $\langle x, y \rangle := \sum_{i=1}^{n} x_{i} y_{i}$, while the Euclidean norm of $x$ is $|x| := \sqrt{\langle x, x \rangle}$. When needed, we will write $x = (x',x_{n})$, where $x' = (x_{1}, ..., x_{n-1}) \in \R^{n-1}$. We let $B_{r}(x)$ be the open ball of radius $r>0$ centered at $x\in \R^{n}$, and we set $B_{r} := B_{r}(0)$. 
Similarly, we let $B'_{r}(x')$ be the $(n-1)$-dimensional open ball with center $x'$ and radius $r$. We denote by either $\Leb^{n}(E)$ or $|E|$ the Lebesgue measure of $E\subset \R^{n}$, and we set $\omega_{n} := \Leb^{n}(B_{1})$.  
Given a measurable set $A \subset \R^n$, we set
\begin{equation*}
A^{(t)} := \left \{ x \in \R^n \, : \, \lim_{r \ra 0^+} \dfrac{|E \cap B_{r}(x)|}{\omega_n r^n} = t \right \} \, , \qquad \text{for all $0 \leq t \leq 1$ \, .}
\end{equation*}
Given two non-empty, bounded subsets $A$, $B \subset \R^{n}$, we denote by $\Hdist(A,B)$ the Hausdorff distance between $A$ and $B$, that is
\begin{equation*} 
    \Hdist(A,B) = \max \left\{ \sup_{x \in A} \, \dist(x,B) \, , \, \sup_{y \in B} \, \dist(y,A) \right\} \, ,
\end{equation*}
where $\dist(x,Z) := \inf_{z \in Z} |x - z|$. Note that the Hausdorff distance becomes a proper distance function when restricted to compact sets (see \cite{Rockafellar1998}).

Given $\Omega \subset \R^{n}$ open and a vector-valued Radon measure $\mu = (\mu_{1},...,\mu_{p})$ on $\Omega$, we denote by $|\mu|$ its total variation. Let $u = (u_{1},...,u_{p}): \Omega \rightarrow \R^{p}$ be summable with respect to $|\mu|$, then $u \cdot \mu$ denotes the Radon measure defined by
\begin{equation*}
	u \cdot \mu (E) := \int_{E} u \cdot d\mu = \sum_{q = 1}^{p} \int_{E} u_{q}\, d\mu_{q} \, .
\end{equation*}
It can be proved (see \cite[Proposition 1.23]{AmbrosioFuscoPallara2000}) that
\begin{equation*} 
	|u \cdot \mu| = |u| \cdot \mu \, .
\end{equation*}

Given a Lipschitz map $g:\R^{n}\to \R^{p}$, we denote by $\Lip(g)$ its Lipschitz constant, and we denote by $\Lip(\R^{n})$ the space of Lipschitz real-valued functions. We record for future reference the following, elementary fact.
\begin{lemma} \label{lem:uniflem}
Let $\{f_{j}\}_{j\in \N} \subset \Lip(\R^{n})$ be such that $\sup_{j \geq 1} \Lip(f_{j}) < \infty$. Assume further that $f_{j}(x)\rightarrow f(x)$ for all $x\in \R^{n}$. Then $f_{j}\rightarrow f$ locally uniformly on $\R^{n}$.
\end{lemma}

Given $f \in L^{1}_{\loc}(\Omega)$, we define
\begin{equation*} 
	|Df|(\Omega) := \sup \left \{ \int_{\Omega} f(x) \div \phi (x) \, dx \, : \, \phi \in C^{1}_{c}(\Omega; \R^{n}) \, , \, ||\phi||_{L^{\infty}(\Omega)} \leq 1 \right \} \, .
\end{equation*}
We denote by $BV(\Omega)$ the space of functions $f \in L^{1}(\Omega)$ with the property that $|Df|(\Omega) < \infty$. In other words, a function $f\in L^{1}(\Omega)$ belongs to $BV(\Omega)$ if and only if its distributional gradient $Df$ is represented by a $\R^{n}$-valued Radon measure with finite total variation. 

When $E$ is a measurable set, we define the perimeter of $E$ in $\Omega$ as
\begin{equation*}
	\p(E;\Omega) := |D \ch_{E}|(\Omega) \, ,
\end{equation*}
where $\ch_{E}$ is the characteristic function of $E$. We say that $E$ has \emph{finite perimeter} in $\Omega$ provided $\p(E;\Omega) < + \infty$. In the sequel, we will often use the following notation: given another open set $A \subset \R^n$, we define
	\begin{equation*}
		\pr(E;A) := \p(E;A \cap \Omega) \, ,
	\end{equation*}
with the short form $\pr(E) := \pr(E;A)$ whenever $A$ contains $\Omega$. We call $\pr$ the relative perimeter in $\Omega$, and $\pr(E;A)$ the relative perimeter of $E$ in $\Omega$ restricted to $A$. 

For the specific purposes of this paper, it is convenient to define $BV_{\loc}(\Omega)$ as the space of functions $f \in L^1_{\loc}(\Omega)$ such that $f \in BV(\Omega \cap A)$ for all open, bounded sets $A\subset \R^{n}$. We remark that $f \in BV_{\loc}(\Omega)$ implies that $f$ has locally bounded variation in $\Omega$, but the converse does not necessarily hold.

Let now $f \in BV_{\loc}(\Omega)$. As a consequence of \cite[Corollary 5.11]{maggi2012sets} we can consider the density of $Df$ with respect to $|Df|$ defined by
	\begin{equation*}
		\nu_{f}(x) = \lim_{r \rightarrow 0^+} \dfrac{Df(B_{r}(x))}{|Df|(B_{r}(x))} \, , \qquad \text{for $|Df|$-a.e. } x \in \R^{n} \, ,
	\end{equation*}
and satisfying $Df = \nu_{f} \cdot |Df|$ and $|\nu_{f}(x)| = 1$ for $|Df|$-almost every $x \in \Omega$. When $E$ has locally finite perimeter in $\Omega$, we set $\nu_{E} = \nu_{\ch_{E}}$. 

Next we recall some well-known facts concerning the total variation functional, $BV$ functions, and sets of locally finite perimeter. For further details, the reader can consult, e.g., \cite{maggi2012sets,EvansGariepy2015,Giu84book}.
\begin{theorem}[Lower-semicontinuity] \label{thm:lscBV} 
Let $f,f_{j}\in L^1_{\loc}(\Omega)$ for $j\in \N$ be such that $f_{j}\to f$ in $L^1_{\loc}(\Omega)$. Then 
\[
|Df|(\Omega) \leq \liminf_{j} |Df_{j}|(\Omega).
\]
\end{theorem}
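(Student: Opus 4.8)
The plan is to argue directly from the variational definition \eqref{eq:totvar} of the total variation, exploiting that $|Df|(\Omega)$ is a supremum of functionals that are each continuous under $L^1_{\loc}$-convergence. Concretely, fix an arbitrary admissible test field $\phi\in C^1_c(\Omega;\R^n)$ with $\|\phi\|_{L^\infty(\Omega)}\le 1$ and set $K:=\spt\phi\Subset\Omega$. The only analytic input needed is that, since $f_j\to f$ in $L^1_{\loc}(\Omega)$, we have $f_j\to f$ in $L^1(K)$, and since $\div\phi$ is bounded on $K$,
\[
\Bigl|\int_\Omega (f_j-f)\,\div\phi\,dx\Bigr|\le \|\div\phi\|_{L^\infty(K)}\,\|f_j-f\|_{L^1(K)}\xrightarrow[j\to\infty]{}0,
\]
so that $\int_\Omega f\,\div\phi\,dx=\lim_j\int_\Omega f_j\,\div\phi\,dx$.

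The steps then follow in order. First I would record that, by \eqref{eq:totvar} applied to each $f_j$, one has $\int_\Omega f_j\,\div\phi\,dx\le |Df_j|(\Omega)$ for every $j$. Combining this with the limit identity above gives
\[
\int_\Omega f\,\div\phi\,dx=\lim_{j}\int_\Omega f_j\,\div\phi\,dx\le\liminf_{j}|Df_j|(\Omega).
\]
Since the right-hand side does not depend on $\phi$, I would finally take the supremum over all admissible $\phi$ and invoke \eqref{eq:totvar} for $f$ itself, obtaining $|Df|(\Omega)\le\liminf_j|Df_j|(\Omega)$, which is the assertion. Note that if $\liminf_j|Df_j|(\Omega)=+\infty$ the inequality is trivial, so one may harmlessly assume it is finite, although the argument above never uses this.

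There is essentially no obstacle here: this is the classical lower semicontinuity of the total variation with respect to $L^1_{\loc}$-convergence, and the proof is the standard observation that a supremum of a family of $L^1_{\loc}$-continuous linear functionals is $L^1_{\loc}$-lower semicontinuous. The single point deserving (minimal) attention is the passage to the limit under the integral sign in the first display, where the compact support of the test field is essential, as it upgrades $L^1_{\loc}$-convergence to genuine $L^1(K)$-convergence and makes the error bound above legitimate.
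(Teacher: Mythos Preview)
Your proof is correct and is exactly the standard argument. The paper itself does not give a proof of this statement: it lists it among ``well-known facts'' in the preliminaries and refers the reader to \cite{maggi2012sets,EvansGariepy2015,Giu84book}, where precisely the argument you wrote is given.
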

	
BV functions can be approximated by smooth functions in a suitable sense.
	\begin{theorem}[Approximation] \label{thm:AnzGiaq}
		Let $\Omega \subset \R^{n}$ be open and $f \in BV(\Omega)$. Then there exists a sequence $\{ f_{j} \}_{j \in \N} \in C^{\infty}(\Omega) \cap BV(\Omega)$ such that
		\begin{equation} \label{eq:strict}
			||f_{j} - f||_{L^{1}(\Omega)} \rightarrow 0 \, , \qquad |Df_{j}|(\Omega) \rightarrow |Df|(\Omega) \, .
		\end{equation}
		Moreover, provided \eqref{eq:strict} holds, we also have $Df_{j} \weakto^{\ast} Df$, i.e., for all $\phi \in C^{\infty}_{c}(\R^{n};\R^{n})$,
		\begin{equation*}
			\int_{\R^{n}} \phi \cdot d Df_{j} \rightarrow \int_{\R^{n}} \phi \cdot d Df \, , \qquad \text{as } j \rightarrow \infty \, .
		\end{equation*}
	\end{theorem}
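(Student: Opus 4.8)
\emph{Strategy.} The plan is to prove this by the classical ``mollify and glue'' construction (Anzellotti--Giaquinta). First I would fix an exhaustion of $\Omega$ by open sets $\Omega_1 \Subset \Omega_2 \Subset \cdots$ with $\overline{\Omega_k}\Subset\Omega_{k+1}$ and $\bigcup_k \Omega_k = \Omega$, set $\Omega_0=\Omega_{-1}=\emptyset$, and consider the locally finite open cover $\{A_k\}_{k\ge 1}$ given by $A_k := \Omega_{k+1}\setminus\overline{\Omega_{k-1}}$. Let $\{\zeta_k\}_{k\ge1}\subset C^\infty_c(\Omega)$ be a partition of unity subordinate to $\{A_k\}$: thus $0\le\zeta_k\le1$, $\spt\zeta_k\subset A_k$, $\sum_k\zeta_k\equiv1$ on $\Omega$, and crucially $\sum_k\nabla\zeta_k\equiv0$. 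Since each $A_k$ meets only finitely many of the others, all sums below are locally finite.

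\emph{Construction and $L^1$-convergence.} Given $\e>0$, with $\rho_\e$ a standard mollifier, I would choose inductively radii $\e_k>0$ so small that for every $k$: (i) $\spt\big(\rho_{\e_k}*(f\zeta_k)\big)\subset A_k$; (ii) $\|\rho_{\e_k}*(f\zeta_k)-f\zeta_k\|_{L^1(\Omega)}\le\e\,2^{-k}$; (iii) $\|\rho_{\e_k}*(f\,\nabla\zeta_k)-f\,\nabla\zeta_k\|_{L^1(\Omega)}\le\e\,2^{-k}$. All three are possible because $f\zeta_k$ and $f\,\nabla\zeta_k$ lie in $L^1(\R^n)$ with compact support inside $\Omega$, and convolution with $\rho_{\e_k}$ converges to them in $L^1$. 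Set $f_\e := \sum_k\rho_{\e_k}*(f\zeta_k)$: this is a locally finite sum of smooth functions, hence $f_\e\in C^\infty(\Omega)$, and since $\sum_k f\zeta_k=f$ we get $\|f_\e-f\|_{L^1(\Omega)}\le\sum_k\|\rho_{\e_k}*(f\zeta_k)-f\zeta_k\|_{L^1(\Omega)}\le\e$.

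\emph{Total-variation estimate.} The heart of the matter is to show $|Df_\e|(\Omega)\le|Df|(\Omega)+\e$. Using the product rule $D(f\zeta_k)=\zeta_k\,Df+f\,\nabla\zeta_k\,\Leb^n$ and $\nabla(\rho_{\e_k}*(f\zeta_k))=\rho_{\e_k}*D(f\zeta_k)$, one writes $\nabla f_\e=\sum_k\rho_{\e_k}*(\zeta_k\,Df)+\sum_k\rho_{\e_k}*(f\,\nabla\zeta_k\,\Leb^n)$. For the second sum I would exploit $\sum_k f\,\nabla\zeta_k=0$ to rewrite it as $\sum_k\big[\rho_{\e_k}*(f\,\nabla\zeta_k)-f\,\nabla\zeta_k\big]$, whose $L^1(\Omega)$-norm is $\le\sum_k\e\,2^{-k}\le\e$ by (iii). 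For the first sum I would use the pointwise bound $|\rho_{\e_k}*(\zeta_k\,Df)|\le\rho_{\e_k}*(\zeta_k\,|Df|)$ and integrate, obtaining $\sum_k\int_\Omega\rho_{\e_k}*(\zeta_k\,|Df|)\,dx=\sum_k\int_\Omega\zeta_k\,d|Df|=\int_\Omega\big(\sum_k\zeta_k\big)\,d|Df|=|Df|(\Omega)$, where the partition of unity is absorbed into the total variation so that \emph{no overlap loss occurs}. Hence $\int_\Omega|\nabla f_\e|\le|Df|(\Omega)+\e$. Taking $\e=1/j$ produces $f_j\in C^\infty(\Omega)$ with $\|f_j-f\|_{L^1(\Omega)}\to0$ and $|Df_j|(\Omega)\le|Df|(\Omega)+1/j<\infty$, so $f_j\in BV(\Omega)$ and $\limsup_j|Df_j|(\Omega)\le|Df|(\Omega)$; combined with the lower-semicontinuity Theorem \ref{thm:lscBV} this forces $|Df_j|(\Omega)\to|Df|(\Omega)$, i.e.\ \eqref{eq:strict}.

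\emph{Weak-$*$ convergence and main obstacle.} Assuming \eqref{eq:strict}, in particular $\sup_j|Df_j|(\Omega)<\infty$. For any $\phi\in C^1_c(\Omega;\R^n)$, integration by parts and the definition of the distributional gradient give $\int_\Omega\phi\cdot dDf_j=\int_\Omega\phi\cdot\nabla f_j\,dx=-\int_\Omega f_j\,\div\phi\,dx\to-\int_\Omega f\,\div\phi\,dx=\int_\Omega\phi\cdot dDf$ since $f_j\to f$ in $L^1(\Omega)$; the uniform bound on $|Df_j|(\Omega)$ then upgrades this to $Df_j\weakto^{\ast} Df$ against the full class of test fields. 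The step I expect to be the main obstacle is precisely the total-variation estimate: one must fix the partition $\{\zeta_k\}$ \emph{first} and only then choose the radii $\e_k$ sequentially, so that the commutator term $\sum_k\rho_{\e_k}*(f\,\nabla\zeta_k)$ becomes negligible while the leading term stays tight by encoding $\sum_k\zeta_k=1$ inside the total variation rather than summing $|Df|$ over overlapping sets.
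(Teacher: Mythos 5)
The paper does not actually prove Theorem \ref{thm:AnzGiaq}: it is quoted as the classical Anzellotti--Giaquinta approximation theorem (cf.\ the references to Giusti and Ambrosio--Fusco--Pallara around Remark \ref{rem:strict}). Your construction is precisely the standard proof of that result: exhaustion, partition of unity with $\sum_k\nabla\zeta_k\equiv 0$, mollification radii chosen \emph{after} fixing the partition, the commutator trick for $\sum_k\rho_{\e_k}*(f\,\nabla\zeta_k)$, and the absorption of $\sum_k\zeta_k=1$ into $|Df|$ so that no overlap factor appears in the total-variation estimate. That part is correct (provided, as you implicitly do, that $\e_k$ is also small enough that $\rho_{\e_k}*(\zeta_k|Df|)$ keeps its support in $A_k$, so that Fubini gives $\int_\Omega\rho_{\e_k}*(\zeta_k|Df|)\,dx=\int_\Omega\zeta_k\,d|Df|$ exactly), and the passage from $\limsup_j|Df_j|(\Omega)\le|Df|(\Omega)$ to \eqref{eq:strict} via Theorem \ref{thm:lscBV} is exactly right.

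The one genuine gap is the last sentence of your weak-$*$ step. The statement tests against $\phi\in C^\infty_c(\R^n;\R^n)$, whose restriction to $\Omega$ need not vanish near $\de\Omega$, and a uniform bound on $|Df_j|(\Omega)$ together with convergence against $C^1_c(\Omega;\R^n)$ does \emph{not} yield this: mass of $Df_j$ could escape to $\de\Omega$. (Take $\Omega=(0,1)$ and smooth steps whose derivative concentrates near $1/j$: the total variations are bounded, $Df_j\weakto^{\ast}0$ against $C_c(\Omega)$, yet $\int\phi\,dDf_j\to\phi(0)\neq 0$ for suitable $\phi\in C^\infty_c(\R)$; of course \eqref{eq:strict} fails there, which is exactly the point.) What rules this out is the strict convergence you have just established, not the mere bound: given $\e>0$, choose an open $A\Subset\Omega$ with $|Df|(\Omega\setminus A)<\e$; then lower semicontinuity on $A$ and $|Df_j|(\Omega)\to|Df|(\Omega)$ give $\limsup_j|Df_j|(\Omega\setminus\overline{A})\le|Df|(\Omega)-|Df|(A)<\e$, and writing $\phi=\eta\phi+(1-\eta)\phi$ with $\eta\in C^\infty_c(\Omega)$, $\eta\equiv1$ on $\overline{A}$, reduces the claim to the interior case you already handled, up to an error of order $\e\,\|\phi\|_{L^\infty}$ uniformly in $j$. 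With this tightness argument inserted, your proof is complete; as written, the final upgrade is attributed to the wrong hypothesis and would not stand on its own.
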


\begin{remark} \label{rem:strict}
If property \eqref{eq:strict} holds, we say that the sequence $f_{j}$ \emph{strictly converges} to $f$ (see \cite[Definition 3.14]{AmbrosioFuscoPallara2000}). 
Thus Theorem \ref{thm:AnzGiaq} tells us in particular that $C^{\infty}(\Omega) \cap BV(\Omega)$ is dense in $BV(\Omega)$ with respect to the strict convergence.
Moreover, the sequence $\{ f_{j} \}_{j \in \N}$ can be chosen in such a way that the following extra property is satisfied (see \cite[Remark 1.18]{Giu84book})
\begin{equation} \label{eq:AnzGiaqboosted}
	\lim_{\rho \rightarrow 0^{+}} \rho^{-N} \int_{\Omega \cap B_{\rho}(x_0)} |f_{j} - f| dx = 0 \, , \qquad \text{for all $N > 0$, $x_{0} \in \de \Omega$, $j \in \N \, .$}
\end{equation} 
\end{remark}
\begin{theorem}[Compactness]
Let $\Omega \subset \R^{n}$ be an open, bounded set with Lipschitz boundary, let $C > 0$ be a fixed constant, and for $j\in\N$ let $f_{j}\in BV(\Omega)$ be such that
\begin{equation*} 
\|f_{j}\|_{BV(\Omega)} := \|f_{j}\|_{L^{1}(\Omega)} + |Df_{j}|(\Omega) \leq C \, , \qquad \text{for every $j$.}
\end{equation*}
Then there exists $f \in BV(\Omega)$ and a subsequence $f_{j_{k}}$ of $f_{j}$ such that
\begin{equation*}
\|f_{j_{k}} - f\|_{L^{1}(\Omega)} \rightarrow 0 \, .
\end{equation*}
\end{theorem}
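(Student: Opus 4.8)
The plan is to reduce the statement to the classical Riesz--Fréchet--Kolmogorov compactness criterion in $L^1$, after first extending the functions to the whole of $\R^n$ so that one may mollify freely.

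First I would invoke the $BV$-extension theorem for bounded Lipschitz domains (see e.g. \cite{Giu84book,AmbrosioFuscoPallara2000}): there exist a bounded open set $\Omega' \Supset \Omega$ and a bounded linear operator $T : BV(\Omega) \to BV(\R^n)$ with $\spt(Tf)\subset\overline{\Omega'}$ and $\|Tf\|_{BV(\R^n)}\le c_\Omega\,\|f\|_{BV(\Omega)}$, where $c_\Omega$ depends only on $\Omega$. Setting $g_j := T f_j$, the sequence $\{g_j\}_{j\in\N}$ is bounded in $BV(\R^n)$, say $\|g_j\|_{BV(\R^n)}\le C_0 := c_\Omega C$, with all supports contained in the fixed compact set $\overline{\Omega'}$. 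Next, fix a standard mollifier $\rho_\e$ and set $g_j^\e := g_j * \rho_\e$. Starting from the translation estimate $\|\tau_h g - g\|_{L^1(\R^n)} \le |h|\,|Dg|(\R^n)$, valid for every $g\in BV(\R^n)$ (proven by strict approximation with smooth functions and the fundamental theorem of calculus along segments), one gets the two uniform bounds
\[
\|g_j^\e - g_j\|_{L^1(\R^n)} \le \e\,|Dg_j|(\R^n) \le \e\,C_0, \qquad \|g_j^\e\|_{L^\infty(\R^n)} + \|\nabla g_j^\e\|_{L^\infty(\R^n)} \le c(\e)\,\|g_j\|_{L^1(\R^n)} \le c(\e)\,C_0,
\]
and all $g_j^\e$ are supported in a fixed compact set. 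Hence, for each fixed $\e$, the family $\{g_j^\e\}_{j}$ is equibounded and equi-Lipschitz on a fixed compact set, so by the Arzelà--Ascoli theorem it has a subsequence converging uniformly, and therefore in $L^1(\R^n)$. Applying this successively with $\e = 1/m$, $m\in\N$, and taking a diagonal subsequence $\{g_{j_k}\}_k$, we arrange that $\{g_{j_k}^{1/m}\}_k$ converges in $L^1(\R^n)$ for every $m$.

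Then I would check that $\{g_{j_k}\}_k$ is Cauchy in $L^1(\R^n)$: for fixed $\e = 1/m$,
\[
\|g_{j_k} - g_{j_l}\|_{L^1} \le \|g_{j_k} - g_{j_k}^{\e}\|_{L^1} + \|g_{j_k}^{\e} - g_{j_l}^{\e}\|_{L^1} + \|g_{j_l}^{\e} - g_{j_l}\|_{L^1} \le \frac{2C_0}{m} + \|g_{j_k}^{1/m} - g_{j_l}^{1/m}\|_{L^1},
\]
so, given $\delta > 0$, one first picks $m$ with $2C_0/m < \delta/2$ and then $k,l$ large so that the middle term is $< \delta/2$. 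By completeness of $L^1(\R^n)$ there is $g\in L^1(\R^n)$ with $g_{j_k}\to g$ in $L^1(\R^n)$; setting $f := g|_\Omega$ we obtain $\|f_{j_k} - f\|_{L^1(\Omega)} \to 0$. Finally, Theorem \ref{thm:lscBV} gives $|Df|(\Omega) \le \liminf_k |Df_{j_k}|(\Omega) \le C < \infty$, so $f\in BV(\Omega)$, which completes the argument.

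The main technical point is the $BV$-extension operator on Lipschitz domains: without it, one would have to control the $L^1$-mass of the $f_j$ on a shrinking collar of $\de\Omega$ by hand, which requires local Poincaré/trace estimates near the (non-smooth) boundary; it is much cleaner to quote the extension result. Everything else is the routine mollification $+$ Arzelà--Ascoli $+$ diagonal-extraction machinery, and the final $BV$-membership is immediate from lower semicontinuity.
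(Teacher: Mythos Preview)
Your argument is correct and follows the standard textbook route (extension to $\R^n$, mollification, Arzel\`a--Ascoli, diagonal extraction, then lower semicontinuity for the $BV$-membership of the limit). Note, however, that the paper does not prove this theorem at all: it is stated in the Preliminaries as a well-known fact, with no proof given, so there is no ``paper's own proof'' to compare against. Your write-up would serve perfectly well as the supplied proof; the only step worth a brief justification if one were being self-contained is the existence of the bounded $BV$-extension operator on Lipschitz domains, which you correctly identify and cite.
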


\begin{theorem}[Coarea for $BV$ functions] \label{thm:coarea}
		Let $\Omega \subset \R^{n}$ be open and $f \in L^{1}(\Omega)$. For $t \in \R$, let
		\begin{equation*}
			E_{t} = \{ x \in \Omega \, : \, f(x) > t \} \, .
		\end{equation*}
		Then the following statements hold:
		\begin{itemize}
			\item[(i)] if $f \in BV(\Omega)$, then $E_{t}$ has finite perimeter for almost every $t \in \R$, $t \mapsto \p(E_{t};\Omega)$ is measurable, and
			\begin{equation*}
				|Df|(\Omega) = \int_{\R} \p(E_{t};\Omega)\, dt \, ;
			\end{equation*}
			\item[(ii)] conversely, if $f \in L^{1}(\Omega)$, $t \mapsto \p(E_{t};\Omega)$, and
			\begin{equation*}
				\int_{\R} \p(E_{t};\Omega)\, dt < + \infty \, ,
			\end{equation*}
			then $f \in BV(\Omega)$.
		\end{itemize}
\end{theorem}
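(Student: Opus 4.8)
The plan is to obtain the identity in (i) by proving the two inequalities $|Df|(\Omega) \le \int_{\R}\p(E_{t};\Omega)\,dt$ and $\int_{\R}\p(E_{t};\Omega)\,dt \le |Df|(\Omega)$ separately, the first being valid for every $f \in L^{1}(\Omega)$ and thus containing part (ii) as a byproduct. Throughout, write $E_{t}^{g} := \{x \in \Omega : g(x) > t\}$ for $g \in L^{1}(\Omega)$, so that $E_{t} = E_{t}^{f}$. The first ingredient is the elementary \emph{layer-cake} representation
\[
f(x) = \int_{0}^{+\infty}\ch_{E_{t}^{f}}(x)\,dt - \int_{-\infty}^{0}\big(1 - \ch_{E_{t}^{f}}(x)\big)\,dt \qquad \text{for a.e. } x \in \Omega,
\]
which, after a Fubini exchange (legitimate since $\int_{\R}\big|\ch_{E_{t}^{f}}(x) - \ch_{(-\infty,0)}(t)\big|\,dt = |f(x)| \in L^{1}(\Omega)$) tested against $\phi \in C^{1}_{c}(\Omega;\R^{n})$ with $\|\phi\|_{L^{\infty}} \le 1$, gives
\[
\int_{\Omega} f\div\phi\,dx = \int_{\R}\Big(\int_{\Omega}\ch_{E_{t}}\div\phi\,dx\Big)\,dt \le \int_{\R}\p(E_{t};\Omega)\,dt,
\]
the term constant in $x$ dropping out since $\phi$ has compact support. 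Taking the supremum over such $\phi$ yields $|Df|(\Omega) \le \int_{\R}\p(E_{t};\Omega)\,dt$ for every $f \in L^{1}(\Omega)$; this is exactly part (ii). Before proceeding, one records that $t \mapsto \p(E_{t};\Omega)$ is measurable: $t \mapsto \ch_{E_{t}}(x)$ is monotone (hence Borel), so dominated convergence makes $t \mapsto \int_{\Omega}\ch_{E_{t}}\div\phi\,dx$ measurable for each fixed $\phi$, and $\p(E_{t};\Omega)$ is the supremum of this quantity over a countable $\|\cdot\|_{L^{\infty}}$-dense family of test fields.

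For the reverse inequality, which is the substantive content of (i), I would argue by smooth approximation. When $g \in C^{\infty}(\Omega)\cap BV(\Omega)$, the classical (Lipschitz) coarea formula gives $\int_{\Omega}|\nabla g|\,dx = \int_{\R}\Hau^{n-1}(\Omega\cap g^{-1}(t))\,dt$, while Sard's theorem ensures that for a.e. $t$ the level set $g^{-1}(t)$ is a $C^{\infty}$ hypersurface equal to $\de E_{t}^{g}$, so that $\p(E_{t}^{g};\Omega) = \Hau^{n-1}(\Omega\cap g^{-1}(t))$ for a.e. $t$; hence $\int_{\R}\p(E_{t}^{g};\Omega)\,dt = |Dg|(\Omega)$ for smooth $g$. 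Now, given $f \in BV(\Omega)$, Theorem \ref{thm:AnzGiaq} provides $f_{j} \in C^{\infty}(\Omega)\cap BV(\Omega)$ with $\|f_{j}-f\|_{L^{1}(\Omega)} \to 0$ and $|Df_{j}|(\Omega) \to |Df|(\Omega)$. The same layer-cake/Fubini bookkeeping gives the identity $\|f_{j}-f\|_{L^{1}(\Omega)} = \int_{\R}\|\ch_{E_{t}^{f_{j}}} - \ch_{E_{t}^{f}}\|_{L^{1}(\Omega)}\,dt$, so along a subsequence $\ch_{E_{t}^{f_{j}}} \to \ch_{E_{t}}$ in $L^{1}(\Omega)$ for a.e. $t$. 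Lower semicontinuity of the perimeter (Theorem \ref{thm:lscBV}) then yields $\p(E_{t};\Omega) \le \liminf_{j}\p(E_{t}^{f_{j}};\Omega)$ for a.e. $t$, and Fatou's lemma combined with the smooth case gives
\[
\int_{\R}\p(E_{t};\Omega)\,dt \le \int_{\R}\liminf_{j}\p(E_{t}^{f_{j}};\Omega)\,dt \le \liminf_{j}\int_{\R}\p(E_{t}^{f_{j}};\Omega)\,dt = \liminf_{j}|Df_{j}|(\Omega) = |Df|(\Omega).
\]
Together with the first inequality this proves the equality in (i), and the finiteness of $|Df|(\Omega)$ then forces $\p(E_{t};\Omega) < +\infty$, i.e. $E_{t}$ of finite perimeter, for a.e. $t \in \R$.

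I do not expect any deep obstacle here: the only genuinely delicate points are bookkeeping ones — checking that the Fubini exchanges are justified (guaranteed by $f \in L^{1}$), that the layer-cake $L^{1}$-identity is correctly signed when $f$ changes sign, and that the various ``a.e. $t$'' exceptional sets (from Sard's theorem and from the subsequence extraction) do not interfere — all of which are dealt with by working with the $dt$-integrated inequalities and exploiting the monotonicity of $t \mapsto \ch_{E_{t}}$. The measurability statement, needed even to phrase the theorem, is the other routine-but-necessary verification, handled via separability of the test-field space as indicated above.
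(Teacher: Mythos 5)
The paper does not prove this statement at all: Theorem \ref{thm:coarea} is recalled in the Preliminaries as a classical result, with the reader referred to \cite{maggi2012sets,EvansGariepy2015,Giu84book}. Your argument is essentially the standard textbook proof from those references (layer-cake plus Fubini for the inequality $|Df|(\Omega)\le\int_{\R}\p(E_{t};\Omega)\,dt$, which also gives (ii); smooth approximation, the Lipschitz coarea formula, Sard's theorem, lower semicontinuity and Fatou for the reverse inequality), and it is correct in substance. The only point to tighten is the measurability reduction: a countable family of test fields that is dense merely in the $L^{\infty}$ norm does not suffice, because $\phi\mapsto\int_{E_{t}}\div\phi\,dx$ is not controlled by $\|\phi\|_{\infty}$ when $\p(E_{t};\Omega)$ may be infinite (you cannot integrate by parts before knowing finite perimeter). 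Fix this by taking a countable family dense with respect to a norm that also controls $\div\phi$ (e.g.\ $\|\phi\|_{\infty}+\|\div\phi\|_{\infty}$ with supports in a fixed exhaustion by compacts), after which the sup over the countable family equals $\p(E_{t};\Omega)$ and measurability follows from the fact that, for fixed $\phi$, $t\mapsto\int_{E_{t}}\div\phi\,dx$ is a difference of monotone functions of $t$, hence Borel.
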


\begin{theorem}[Coarea for Lipschitz functions] \label{thm:coareaLip}
If $\phi\in \Lip(\R^{n})$, $E\subset \R^{n}$ is a Borel set, and $f:\R^{n}\to [0,\infty)$ is a Borel function, then
\[
\int_{E}f |\nabla \phi| = \int_{\R} \int_{E\cap \{\phi = t\}}f\, d\Hau^{n-1}\, dt\,,
\]
where $\Hau^{n-1}$ denotes the $(n-1)$-dimensional Hausdorff measure in $\R^{n}$.
\end{theorem}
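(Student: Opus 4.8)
Since the statement is the classical coarea formula for Lipschitz functions, the plan is the standard two-stage argument: a purely measure-theoretic reduction to the case of indicator functions, followed by the geometric level-set identity, which itself splits according to whether $\nabla\phi$ vanishes or not. Absorbing $\ch_{E}$ into $f$ we may take $E=\R^{n}$, so the goal becomes $\int_{\R^{n}}f\,|\nabla\phi|\,dx=\int_{\R}\int_{\{\phi=t\}}f\,d\Hau^{n-1}\,dt$ for every non-negative Borel function $f$.

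\emph{Reduction to $f=\ch_{A}$.} One first checks that, for every Borel set $A\subset\R^{n}$, the slice function $t\mapsto\Hau^{n-1}(A\cap\{\phi=t\})$ is Lebesgue measurable; this follows from the Eilenberg inequality $\int_{\R}^{\ast}\Hau^{n-1}(A\cap\{\phi=t\})\,dt\le C(n)\,\Lip(\phi)\,|A|$ by approximating $A$ in measure from inside by compact sets (on which the slice function is upper semicontinuous in $t$) and from outside by open sets, and then passing to the limit. Granting the asserted identity for every indicator, a general non-negative Borel $f$ is written as an increasing pointwise limit of simple functions, and one applies the monotone convergence theorem, in turn, to the inner $\Hau^{n-1}$-integral, to the outer $dt$-integral, and to the left-hand $\Leb^{n}$-integral; this yields the formula for $f$.

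\emph{The level-set identity.} It remains to prove $\int_{A}|\nabla\phi|\,dx=\int_{\R}\Hau^{n-1}(A\cap\{\phi=t\})\,dt$ for Borel $A$. By Rademacher's theorem $\phi$ is differentiable $\Leb^{n}$-a.e., so, discarding an $\Leb^{n}$-null set (which, by Eilenberg, meets $\Hau^{n-1}$-a.e. slice in a negligible set), we split $A=A_{0}\cup A_{1}$, disjointly, with $A_{0}=A\cap\{\nabla\phi=0\}$ and $A_{1}=A\cap\{\nabla\phi\neq0\}$. On $A_{0}$ the left side is $0$; the right side is $0$ as well, since $A_{0}\cap\{\phi=t\}=\emptyset$ for $t\notin\phi(A_{0})$ while $\Leb^{1}(\phi(A_{0}))=0$ by the Morse--Sard--Federer theorem for the critical set of a Lipschitz function, so the $dt$-integral is that of a non-negative function over a $\Leb^{1}$-null set. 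On $A_{1}$ one decomposes into countably many Borel pieces, on each of which some partial derivative of $\phi$ stays bounded away from $0$, so that $\{\phi=t\}$ is locally a Lipschitz graph; applying the area formula to the bi-Lipschitz map $x\mapsto(x',\phi(x))$ (in suitable coordinates), Fubini's theorem in the target, and the elementary identity relating the graph's area element to $|\nabla\phi|$ divided by the relevant partial derivative, one recovers $|\nabla\phi|\,\Leb^{n}$ on the left and $\Hau^{n-1}$ of the fibres on the right; summing over the pieces by $\sigma$-additivity and adding the (vanishing) $A_{0}$-contribution completes the proof.

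\emph{Main obstacle.} The genuinely delicate point is the degenerate part $\{\nabla\phi=0\}$: one must know that the level sets carry no $\Hau^{n-1}$-mass there for a.e. $t$, i.e. the $(N)$-property $\Leb^{1}(\phi(\{\nabla\phi=0\}))=0$ for Lipschitz $\phi$; the measurability of the slice function is a second, milder technicality. The area-formula computation on the non-degenerate part and the monotone-convergence extension from indicators are routine. (One could alternatively try to deduce the formula from the BV coarea formula of Theorem \ref{thm:coarea} applied to $\phi$, but identifying $\p(E_{t};\Omega)$ with $\Hau^{n-1}\res\{\phi=t\}$ for a.e.\ $t$ requires essentially the same Sard-type input.) As the statement is entirely classical, the write-up may simply invoke \cite{EvansGariepy2015} or \cite{maggi2012sets}; the sketch above records the structure of the argument.
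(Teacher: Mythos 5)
The paper does not prove Theorem \ref{thm:coareaLip} at all: it is recalled as a classical preliminary, with the reader referred to \cite{maggi2012sets,EvansGariepy2015,Giu84book}, so simply invoking those references (as you suggest at the end) is exactly what the paper does. If, however, your sketch is taken as the justification, it contains a genuine error precisely at the point you yourself single out as the main obstacle. You dispose of the degenerate set $A_{0}=A\cap\{\nabla\phi=0\}$ by claiming $\Leb^{1}(\phi(\{\nabla\phi=0\}))=0$ ``by the Morse--Sard--Federer theorem for the critical set of a Lipschitz function''. No such theorem exists: Sard-type results in Federer's form require $C^{k}$ regularity with $k\ge n$ for maps $\R^{n}\to\R$, and Whitney's classical example provides a $C^{1}$ (hence locally Lipschitz) function on $\R^{2}$ which is non-constant along a connected arc of critical points, so its set of critical values contains an interval and has positive $\Leb^{1}$-measure. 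Thus for $n\ge 2$ the Luzin-type property you invoke is simply false for Lipschitz $\phi$, and the argument ``the $dt$-integral is over a null set of $t$'s'' collapses.

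What is true, and what the classical proof actually establishes, is the weaker statement that
\begin{equation*}
\int_{\R}\Hau^{n-1}\big(A_{0}\cap\{\phi=t\}\big)\,dt=0\,,
\end{equation*}
i.e.\ almost every level set meets the critical set in an $\Hau^{n-1}$-null set, even though $\phi(A_{0})$ itself may have positive measure. This is proved not by a Sard-type theorem but by a perturbation argument (see \cite[Sec.~3.4]{EvansGariepy2015}): one compares $\phi$ with $\phi_{\e}=\phi+\e\,\ell$ for a linear function $\ell$ (or works with the auxiliary map $(x,y)\mapsto\phi(x)+\e y$), applies the Eilenberg inequality/area-formula estimates already available in the nondegenerate setting, and lets $\e\to0$; the smallness of the Jacobian on $A_{0}$ produces the factor that kills the integral in the limit. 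With this lemma in place of your Sard claim, the rest of your outline (measurability of the slice function via the premeasures $\Hau^{n-1}_{\delta}$, reduction to indicators by monotone convergence, and the area-formula computation on the pieces where the graph map is bi-Lipschitz --- the latter needs the standard linearization decomposition rather than merely a pointwise lower bound on a partial derivative) is the standard route and is sound.
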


We will also need the following well-known result, which proof can be found in \cite{maggi2012sets}.
\begin{theorem}[Area Formula] \label{thm:AreaForm}
	Let $M \subset \R^n$ be a locally $\Hau^k$-rectifiable set and $f: \R^n \to \R^m$ be a Lipschitz continuous function, with $1 \leq k \leq m$. Then:
	\begin{itemize}
		\item[(i)] for $\Hau^k$-almost all $x \in M$, the restriction of $f$ to $x + T_x M$, where $T_xM$ is the approximated tangent space to $M$ at $x$, is differentiable at $x$ and we denote by $d^M f_x : T_x M \to \R^m$ its differential;
		\item[(ii)] the following identity holds
		\begin{equation} \label{eq:AreaForm}
			\int_{\R^m} \Hau^0(M \cap \{ f = y \}) d \Hau^k(y) = \int_M J^M f d \Hau^k \, ,
		\end{equation}
	    where $J^M f (x) := \sqrt{\det(d^M f^\ast_x \circ d^M f_x)}$ and $d^M f^\ast_x$ denotes the adjoint of $d^M f_x$.
	\end{itemize}
\end{theorem}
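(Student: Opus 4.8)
The plan is to reduce the statement, via the structure of rectifiable sets, to the Euclidean area formula for Lipschitz maps on subsets of $\R^{k}$, and then to prove that by a measurable bi-Lipschitz decomposition together with the elementary linear case. First I would invoke the structure theorem for $\Hau^{k}$-rectifiable sets to write $M = M_{0}\cup\bigcup_{j\ge 1}M_{j}$ with $\Hau^{k}(M_{0})=0$, each $M_{j}$ a Borel subset of the graph of a $C^{1}$ function, parametrized by a $C^{1}$-diffeomorphism $g_{j}$ from a subset $A_{j}\subset\R^{k}$ with $T_{x}M = dg_{j}(\R^{k})$ at $\Hau^{k}$-a.e.\ $x\in M_{j}$ (and, exhausting $M$ by bounded pieces, we may take each $A_{j}$ bounded). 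By $\sigma$-additivity of both sides of \eqref{eq:AreaForm} it suffices to prove the identity with $M$ replaced by each $M_{j}$. Pulling back through the chart, this becomes the area formula for $\widetilde f := f\circ g_{j}:A_{j}\to\R^{m}$, with the chain-rule identity $J^{M}f(g_{j}(u))\,Jg_{j}(u) = J\widetilde f(u)$ for a.e.\ $u$: indeed $d\widetilde f_{u} = d^{M}f_{g_{j}(u)}\circ dg_{j}|_{u}$, and the identity is then the multiplicativity of determinants (Cauchy--Binet). The piece $M_{0}$ does not contribute: since $f$ is Lipschitz, $\Hau^{k}(f(M_{0}))=0$, so both $\int_{M_{0}}J^{M}f\,d\Hau^{k}$ and $\int_{\R^{m}}\Hau^{0}(M_{0}\cap\{f=y\})\,d\Hau^{k}(y)$ (an integral of a non-negative function over the $\Hau^{k}$-null set $f(M_{0})$) vanish.

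It then remains to prove, for $\widetilde f:A\to\R^{m}$ Lipschitz with $A\subset\R^{k}$ bounded and measurable, that $\int_{\R^{m}}\Hau^{0}(A\cap\{\widetilde f=y\})\,d\Hau^{k}(y)=\int_{A}J\widetilde f\,d\Leb^{k}$, where $J\widetilde f=\sqrt{\det(d\widetilde f^{\,*}\,d\widetilde f)}$ is defined $\Leb^{k}$-a.e.\ by Rademacher's theorem. Split $A=Z\cup G$, where $Z$ is (up to a $\Leb^{k}$-null set) the set of points of differentiability with $J\widetilde f=0$, i.e.\ $\mathrm{rank}\,d\widetilde f<k$, and $G=A\setminus Z$. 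On $Z$ I would show $\Hau^{k}(\widetilde f(Z))=0$ by a Vitali-covering argument: near a point of $Z$ the map sends a small ball $B_{r}(x)$ into an $\varepsilon r$-neighbourhood of a $(k-1)$-dimensional affine subspace intersected with a ball of radius $\Lip(\widetilde f)\,r$, a set of $\Hau^{k}$-measure $\le C\varepsilon\, r^{k}$; summing over a disjoint subcover controls $\Hau^{k}(\widetilde f(Z))$ by $C\varepsilon$ times the measure of a neighbourhood of $Z$, hence it is $0$, and the left-hand side restricted to $Z$ vanishes as above, matching $\int_{Z}J\widetilde f=0$. On $G$ the core point is a decomposition $G=\bigcup_{i\ge 1}G_{i}$ into Borel sets on each of which $\widetilde f$ is injective and, simultaneously, is $\varepsilon$-close in the Lipschitz sense to an injective linear map $L_{i}$ drawn from a fixed countable dense family; this is produced by covering $G$ by the countably many sets where, for fixed rationals, $d\widetilde f$ lies within $\varepsilon$ of some $L_{i}$ and the first-order Taylor remainder is suitably controlled (a Lusin/Egorov-type localization), followed by disjointification. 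Using the elementary linear area formula $\Hau^{k}(L(S))=|\det(L^{*}L)|^{1/2}\,\Leb^{k}(S)$ for injective linear $L$ and Borel $S\subset\R^{k}$, together with the $\varepsilon$-closeness, one gets $\big|\Hau^{k}(\widetilde f(S))-\int_{S}J\widetilde f\,d\Leb^{k}\big|\le C\varepsilon\,\Leb^{k}(S)$ for all Borel $S\subset G_{i}$; since $\widetilde f$ is injective on $G_{i}$, $\Hau^{k}(\widetilde f(S))=\int_{\R^{m}}\Hau^{0}(S\cap\{\widetilde f=y\})\,d\Hau^{k}(y)$. Summing over $i$ and letting $\varepsilon\to 0$ along a refining sequence of decompositions yields the formula on $G$, and adding the contribution of $Z$ completes the proof.

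The main obstacle is the measurable bi-Lipschitz decomposition of the non-degenerate set $G$ and the bookkeeping that keeps the errors on the individual pieces both summable and uniformly small: one must choose the countable family of model linear maps with care, quantify ``differentiability with controlled remainder on a positive-measure piece'' (the delicate Lusin/Egorov step), and check that disjointification preserves the estimates. Everything else — Rademacher's theorem, the Cauchy--Binet chain rule, the linear area formula, and the covering estimate on the degenerate set — is classical; since the whole statement is standard, one may alternatively just cite \cite{maggi2012sets}, but the above is the route I would take to make it self-contained.
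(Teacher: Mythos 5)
The paper offers no proof of this statement: it is quoted as a classical result, with the explicit pointer ``which proof can be found in \cite{maggi2012sets}'', so there is no internal argument to compare yours against. Your sketch follows the standard textbook route (Federer, Evans--Gariepy, and Maggi's own treatment): decompose $M$ into an $\Hau^k$-null piece plus Borel subsets of $C^1$ images, pull back through the charts, and prove the Euclidean Lipschitz area formula by splitting off the degenerate set (image of $\Hau^k$-measure zero via a covering estimate) and using Lipschitz linearization plus the linear formula $\Hau^k(L(S))=\sqrt{\det(L^*L)}\,\Leb^k(S)$ on the non-degenerate set. That outline is sound, and the bookkeeping issues you flag (choice of the countable family of model linear maps, disjointification, monotone convergence over the pieces) are exactly the standard ones. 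The one point you pass over too quickly is part (i): your chain-rule identity $d\widetilde f_u = d^Mf_{g_j(u)}\circ dg_j|_u$ presupposes that $d^Mf_x$ exists, i.e.\ that $f$ restricted to the affine plane $x+T_xM$ is differentiable at $x$, which is not immediate from Rademacher applied to $\widetilde f=f\circ g_j$, since points of $x+T_xM$ need not lie on $M_j$. The fix is the usual contact argument: for $\Hau^k$-a.e.\ $x\in M_j$ the $C^1$ surface carrying $M_j$ has first-order contact with $x+T_xM$, so the Lipschitz bound on $f$ lets you transfer differentiability of $\widetilde f$ at $g_j^{-1}(x)$ to differentiability of $f|_{x+T_xM}$ at $x$, with $d^Mf_x = d\widetilde f_{g_j^{-1}(x)}\circ (dg_j)^{-1}$; with that inserted, both (i) and the chain rule are justified and your proof of (ii) goes through. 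Given that the paper itself simply cites \cite{maggi2012sets}, citing is also a perfectly acceptable resolution here.
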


We continue by introducing the notion of trace of a $BV$ function on a Lipschitz boundary.
	\begin{theorem} \label{thm:trace}
		Let $\Omega \subset \R^{n}$ be an open set with Lipschitz boundary and denote by $\nu_{\Omega}$ the unit, inner normal vector defined $\Hau^{n-1}$-almost everywhere on $\de \Omega$. Then there exists a unique linear and bounded \emph{inner trace operator}
\begin{equation*}
	\Tr^{+}(\cdot,\de \Omega): BV(\Omega) \rightarrow L^{1}(\de \Omega, \Hau^{n-1})\,,
\end{equation*}
such that, for all $f \in BV(\Omega)$,
\begin{equation} \label{eq:proptrace}
	\lim_{r \ra 0^+} \, \mint{B_{r}(x) \cap \Omega} |\Tr^{+}(f,\de \Omega)(x) - f(y)| \, dy = 0 \, , \qquad \text{for $\Hau^{n-1}$-a.e. $x \in \Omega \, .$}
\end{equation}
Moreover, for all $f \in BV(\Omega)$ and $\phi \in C^{1}_{c}(\R^{n}, \R^{n})$, the following Gauss-Green formula holds:
\begin{equation*} 
	\int_{\Omega} f \, \div \phi \, dx = -(\phi \cdot Df) \Omega - \int_{\de \Omega} \langle \phi, \nu_{\Omega} \rangle \, \Tr^{+}(f,\de \Omega) \, d \Hau^{n-1}\, .
\end{equation*}
\end{theorem}
The function $\Tr^{+}(f,\de \Omega)$ is called the \emph{inner trace} of $f$ on $\de\Omega$. Formula \eqref{eq:proptrace} ensures in particular that
\begin{equation} \label{eq:proptrace2}
		\Tr^{+}(f,\de \Omega)(x) = \lim_{r \ra 0^+} \, \mint{B_{r}(x) \cap \Omega} f(y) \, dy \, , \qquad \text{for $\Hau^{n-1}$-a.e. $x \in \de \Omega \, .$}
\end{equation}
\begin{remark} \label{rem:contrace}
	Under the assumptions of Theorem \ref{thm:trace}, the operator $\Tr^{+}(\cdot,\de \Omega)$ is also continuous with respect to the strict convergence in $\Omega$ (see \cite[Theorem 2.11]{Giu84book} or \cite[Theorem 3.88]{AmbrosioFuscoPallara2000}). Moreover, by \eqref{eq:AnzGiaqboosted} and \eqref{eq:proptrace2}, we have that the sequence $\{f_{j}\}_{j\in \N}$ of Theorem \ref{thm:AnzGiaq} satisfies
\begin{align} \label{eq:traceappr}
			|\Tr^{+}(f_{j},\de \Omega)(x) - \Tr^{+}(f,\de \Omega)(x)| & = \left| \lim_{r \ra 0^+} \, \mint{B_{r}(x) \cap \Omega} f_{j}(y) \, dy - \lim_{r \ra 0^+} \, \mint{B_{r}(x) \cap \Omega} f(y) \, dy \right| \nonumber \\
			& \leq \lim_{r \ra 0^+} \, \mint{B_{r}(x) \cap \Omega} |f_{j}(y) - f(y)| \, dy = 0\nonumber
\end{align}
 for $\Hau^{n-1}$-almost every $x \in \de \Omega$. In other words, $\Tr^{+}(f_{j},\de \Omega) = \Tr^{+}(f,\de \Omega)$ $\Hau^{n-1}$-almost everywhere on $\de\Omega$, for all $j$.
	\end{remark}
	
	Let $\Omega$, $\Omega' \subset \R^{n}$ open sets with Lipschitz boundary, such that $\overline{\Omega} \subset \Omega'$. For any $f \in BV(\Omega')$, we can also consider a trace of $f$ on $\de\Omega$ computed with respect to $\Omega' \setminus \overline{\Omega}$. We thus set 
\[
\Tr^{-}(f, \de \Omega) := \Tr^{+}(f,\de  (\Omega' \setminus \overline{\Omega}))\restrict \de \Omega\,.
\] 
The function $\Tr^{-} (f,\de \Omega) \in L^{1}(\de \Omega; \Hau^{n-1})$ is called the \emph{outer trace} of $f$ on $\de\Omega$. 
\begin{lemma} \label{lem:extBV}
Let $\Omega$, $\Omega' \subset \R^{n}$ be open sets. In addition, let $\Omega$ be bounded and have Lipschitz boundary, and assume that $\Omega \subset \subset \Omega'$. For all $f \in BV(\Omega')$, we have
\begin{equation} \label{eq:extBV}
|Df|(\de\Omega) = \|\Tr^{+}(f, \de \Omega) - \Tr^{-}(f, \de \Omega)\|_{L^{1}(\de \Omega; \Hau^{n-1})} \, .
\end{equation}
\end{lemma}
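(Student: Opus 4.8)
The plan is to identify the vector Radon measure $Df \restrict \de\Omega$ explicitly as $(\Tr^{+}(f,\de\Omega)-\Tr^{-}(f,\de\Omega))\,\nu_\Omega\,\Hau^{n-1}\restrict\de\Omega$, and then to take total variations. To this end I would apply the Gauss--Green formula \eqref{eq:GreenBV} both in $\Omega$ and in a suitable open region lying on the opposite side of $\de\Omega$, and compare the outcome with the Gauss--Green identity on the union. Fix $\phi \in C^{1}_{c}(\R^{n};\R^{n})$. Since $\overline\Omega\cup\spt\phi$ is a compact subset of the open set $\Omega'$, I first pick a bounded open set $W$ with Lipschitz (indeed, one may take smooth) boundary such that $\overline\Omega\cup\spt\phi\subset W\Subset\Omega'$, and set $\Omega'':=W\setminus\overline\Omega$. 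Then $\Omega''$ is bounded with Lipschitz boundary $\de\Omega''=\de W\cup\de\Omega$ (a disjoint union, as $\overline\Omega\subset W$), its inner unit normal along $\de\Omega$ is $-\nu_\Omega$, and $\phi$ vanishes in a neighbourhood of $\de W$. Note that $f$ restricts to a $BV$ function on each of $\Omega$, $\Omega''$ and $W$, because these are open subsets of $\Omega'$ and $|Df|(W)\le |Df|(\Omega')<\infty$.

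Next I would write \eqref{eq:GreenBV} for $f$ on $\Omega$ and on $\Omega''$. In the identity on $\Omega''$, the boundary term reduces to an integral over $\de\Omega$ against $-\nu_\Omega$ (the $\de W$ part being killed by $\phi$), and the relevant trace is $\Tr^{+}(f,\de\Omega'')\restrict\de\Omega$. Since the inner trace is a local object — it is determined by the averages $\mint{B_r(x)\cap\Omega''}$ appearing in \eqref{eq:proptrace}, and near $\de\Omega$ the set $\Omega''$ coincides with $\Omega'\setminus\overline\Omega$ (both equal $\R^n\setminus\overline\Omega$ locally) — this trace agrees $\Hau^{n-1}$-a.e. on $\de\Omega$ with $\Tr^{-}(f,\de\Omega)$. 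Adding the two Gauss--Green identities, using $W=\Omega\sqcup\de\Omega\sqcup\Omega''$ together with $|\de\Omega|=0$ on the left, and comparing with $\int_{W}f\,\div\phi\,dx=-(\phi\cdot Df)(W)=-(\phi\cdot Df)(\Omega)-(\phi\cdot Df)(\de\Omega)-(\phi\cdot Df)(\Omega'')$, the bulk terms cancel and one is left with
\[
(\phi\cdot Df)(\de\Omega)=\int_{\de\Omega}\langle\phi,\nu_\Omega\rangle\,\big(\Tr^{+}(f,\de\Omega)-\Tr^{-}(f,\de\Omega)\big)\,d\Hau^{n-1}\,.
\]

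Finally, since this holds for every $\phi\in C^{1}_{c}(\R^{n};\R^{n})$, and both sides are the integral against $\phi$ of a finite vector Radon measure concentrated on the compact set $\de\Omega$ (finiteness of $Df\restrict\de\Omega$ from $|Df|(\de\Omega)\le|Df|(\Omega')<\infty$, and of the other measure from $\Tr^{\pm}(f,\de\Omega)\in L^{1}(\de\Omega;\Hau^{n-1})$ by Theorem \ref{thm:trace}), a standard Riesz-representation argument gives the equality of vector measures
\[
Df\restrict\de\Omega=\big(\Tr^{+}(f,\de\Omega)-\Tr^{-}(f,\de\Omega)\big)\,\nu_\Omega\,\Hau^{n-1}\restrict\de\Omega\,.
\]
Taking total variations, using \eqref{eq:totvarumu} and $|\nu_\Omega|=1$ $\Hau^{n-1}$-a.e. on $\de\Omega$, yields $|Df|\restrict\de\Omega=|\Tr^{+}(f,\de\Omega)-\Tr^{-}(f,\de\Omega)|\,\Hau^{n-1}\restrict\de\Omega$, and evaluating on $\R^{n}$ gives \eqref{eq:extBV}. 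I do not expect a genuine obstacle here; the only step requiring a little care is the identification of $\Tr^{+}(f,\de\Omega'')\restrict\de\Omega$ with $\Tr^{-}(f,\de\Omega)$, which rests on the locality of the trace operator and on the fact that $\Omega''$ and $\Omega'\setminus\overline\Omega$ are locally indistinguishable near $\de\Omega$ — this is also what legitimizes working with the auxiliary Lipschitz region $W$ in the first place.
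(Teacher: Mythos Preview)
The paper does not supply a proof of this lemma; it is stated without proof as a standard fact from the theory of $BV$ functions (it is, e.g., \cite[Theorem~3.84]{AmbrosioFuscoPallara2000}). Your argument is correct and is essentially the textbook one: apply the Gauss--Green identity \eqref{eq:GreenBV} on each side of $\de\Omega$, add, and compare with the identity on the union to isolate $Df\restrict\de\Omega$. The auxiliary Lipschitz region $W$ is a clean device to produce a bounded Lipschitz domain $\Omega''$ on the outer side while keeping $\phi$ away from $\de W$, and your remark on the locality of the trace (via the averages in \eqref{eq:proptrace}) is precisely what justifies $\Tr^{+}(f,\de\Omega'')\restrict\de\Omega=\Tr^{-}(f,\de\Omega)$. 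The final step---identifying the vector measure and taking total variations using $|\nu_\Omega|=1$---is routine. No gaps.
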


\begin{lemma} \label{lem:faetrace}
Fix an open set $A \subset \R^{n}$ and a Lipschitz function $\phi:A\to \R$ of class $C^{1}$, such that $|\nabla \phi(x)|>0$ for all $x\in A$. Set $A_{r} = \phi^{-1}(-\infty,r)$. Then for all $f \in BV(A)$, for $\cL^{1}$-almost every $r\in \R$, and $\Hau^{n-1}$-almost every $x \in \de A_{r}\cap A$, we have
\begin{equation}\label{eq:fxequaltrace}
f(x) = \Tr^{+} (f, \de A_{r})(x) = \Tr^{-} (f, \de A_{r})(x) \, .
\end{equation}
\end{lemma}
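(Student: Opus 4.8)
The plan is to combine the coarea formula for Lipschitz functions with the trace properties recorded in Theorem~\ref{thm:trace}, reducing everything to the simpler one-dimensional fact that a function of bounded variation on an interval has well-defined left and right limits at almost every point. First I would observe that, by the coarea formula (Theorem~\ref{thm:coareaLip} applied with the Lipschitz function $\phi$, whose gradient is nowhere zero on $A$), for $\cL^{1}$-a.e. $r\in\R$ the slice $\de A_{r}\cap A = \{\phi = r\}\cap A$ is an $\Hau^{n-1}$-rectifiable hypersurface of class $C^{1}$, and moreover, since $f\in BV(A)$, the restriction of $f$ to this slice is $\Hau^{n-1}$-integrable for a.e.\ $r$ and the total variation measure $|Df|$ of $f$ does not charge $\{\phi=r\}$ for a.e.\ $r$ (this is because $|Df|$ is a finite Radon measure on bounded subsets of $A$, hence has at most countably many level sets of $\phi$ with positive mass, by a standard Fubini/counting argument together with the coarea-type decomposition of $|Df|$ along the foliation $\{\phi=r\}$).

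The key step is then to show that for those ``good'' values of $r$, the jump of $f$ across $\de A_{r}$ vanishes $\Hau^{n-1}$-a.e.\ on $\de A_r\cap A$, i.e.\ $\Tr^{+}(f,\de A_{r}) = \Tr^{-}(f,\de A_{r})$ $\Hau^{n-1}$-a.e.\ on $\de A_r \cap A$. For this I would invoke Lemma~\ref{lem:extBV}: applying it with the bounded Lipschitz domain $A_{r}\cap A'$ for suitable open $A'\Subset A$ (so that $\de(A_r\cap A')$ coincides with $\de A_r$ inside $A'$), it yields
\[
\|\Tr^{+}(f,\de A_{r}) - \Tr^{-}(f,\de A_{r})\|_{L^{1}(\de A_{r}\cap A';\Hau^{n-1})} = |Df|(\de A_{r}\cap A')\,,
\]
and the right-hand side is zero precisely for the good values of $r$ for which $|Df|$ does not charge $\{\phi=r\}$. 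Since a countable exhaustion $A' \nearrow A$ can be used, this gives the coincidence of the two traces $\Hau^{n-1}$-a.e.\ on all of $\de A_{r}\cap A$, for a.e.\ $r$.

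Finally, having the two traces equal, I would identify their common value with $f(x)$ itself at $\Hau^{n-1}$-a.e.\ $x\in\de A_{r}\cap A$. The point is that the function $f$, being $BV$, is approximately continuous $\cL^{n}$-a.e., and by the foliated structure ($A$ is covered up to an $\Hau^{n-1}$-null set, for a.e.\ $r$, by the level sets $\de A_{r}$) together with the fact that $\Tr^{+}$ is the $L^{1}$-averaged limit from inside $A_r$ and $\Tr^{-}$ the averaged limit from outside, the bilateral average $\mint{B_{\rho}(x)\cap A} f$ converges to the same value from both sides, so the approximate limit of $f$ at $x$ exists and equals that value. Combining with a coarea/Fubini argument (for a.e.\ $r$, $\Hau^{n-1}$-a.e.\ $x\in\de A_r$ is a Lebesgue point of $f$ in the full $n$-dimensional sense, because the set of non-Lebesgue points is $\cL^{n}$-null and the coarea formula transfers this to the slices), I would conclude \eqref{eq:fxequaltrace}. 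The main obstacle I anticipate is the careful bookkeeping needed to pass from the ``$\cL^{n}$-a.e.'' statements about Lebesgue points and approximate continuity of $f$ to ``$\Hau^{n-1}$-a.e.\ on $\de A_r$ for $\cL^1$-a.e.\ $r$'' statements, which requires invoking the coarea formula for the Radon measures involved (not just for the Lebesgue measure) and checking that $|\nabla\phi|$ being bounded above and below on compact subsets makes the change of variables harmless.
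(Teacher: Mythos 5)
Your proposal is correct and follows essentially the same route as the paper's proof: first the coincidence of inner and outer traces for $\cL^{1}$-a.e.\ $r$, obtained from Lemma~\ref{lem:extBV} together with the observation that $|Df|$, being locally finite, can charge at most countably many of the pairwise disjoint level sets $\{\phi=r\}$; then the identification of the common trace with $f(x)$ at Lebesgue points, using the half-density $|A_{r}\cap B_{\rho}(x)|=\tfrac12\omega_{n}\rho^{n}+o(\rho^{n})$ coming from the $C^{1}$ regularity of the level sets, and the coarea formula for $\phi$ (with $|\nabla\phi|>0$) to transfer the $\cL^{n}$-a.e.\ Lebesgue-point property to $\Hau^{n-1}$-a.e.\ points of $\de A_{r}\cap A$ for a.e.\ $r$. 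The only differences are cosmetic (your exhaustion $A'\Subset A$ when invoking Lemma~\ref{lem:extBV} is a slightly more careful handling of boundedness than the paper's direct application), so no gap to report.
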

\begin{proof}
We observe that, for a.e. $r\in \R$,
\begin{equation}\label{eq:ugualtraces}
\Tr^{+} (f, \de A_{r}) = \Tr^{-} (f, \de A_{r}) \, , \qquad \text{$\Hau^{n-1}$-a.e. on $\de A_{r}\cap A$.}
\end{equation}
Indeed, for the proof of \eqref{eq:ugualtraces} we can combine \eqref{eq:extBV} with $|Df|(\de A_{r}\cap A) = 0$ for almost every $r\in \R$, which in turn comes from the fact that $|Df|$ is a finite measure and $\de A_{r}\cap \de A_{s} \cap A = \emptyset$ whenever $r\neq s$. Let $x \in A$ be a Lebesgue point for $f$, then $x \in \de A_{r}$ if and only if $r = \phi(x)$. Thanks to the smoothness of $\de A_{r}\cap A$ (a consequence of the Implicit Function Theorem) we have
\begin{equation*}
	|A_{r} \cap B_{\rho}(x)| = \dfrac{1}{2} \, \rho^{n} + o(\rho^{n}) \, , \qquad \text{as } \rho \ra 0^+ \, ,
\end{equation*}
and consequently
\begin{align*}
	f(x) & = \lim_{\rho \ra 0^+} \, \dfrac{1}{\omega_{n} \, \rho^{n}} \int_{B_{\rho}(x)} f(y) \, dy \\
			& = \lim_{\rho \ra 0^+} \, \dfrac{1}{\omega_{n} \, \rho^{n}} \left( \int_{A_{r} \cap B_{\rho}(x)} f(y) \, dy + \int_{B_{\rho}(x) \setminus A_{r}} f(y) \, dy \right) \\
			& = \lim_{\rho \ra 0^+} \, \dfrac{1}{2} \, \dfrac{1}{|A_{r} \cap B_{\rho}(x)|} \int_{A_{r} \cap B_{\rho}(x)} f(y) \, dy + \dfrac{1}{2} \, \dfrac{1}{|B_{\rho}(x) \setminus A_{r}|} \int_{B_{\rho}(x) \setminus A_{r}} f(y) \, dy \\
			& = \dfrac{1}{2} \, \Tr^{+}(f, \de A_{r})(x) + \dfrac{1}{2} \, \Tr^{-}(f, \de A_{r})(x)\\
			& = \Tr^{\pm}(f, \de A_{r})(x) \, .
\end{align*}
Since the set of Lebesgue points for $f$ coincides with $A$ up to a $\cL^{n}$-negligible set, by Theorem \ref{thm:coareaLip} we obtain that the first equality in \eqref{eq:fxequaltrace} is verified for $\cL^{1}$-almost every $r\in \R$ and $\Hau^{n-1}$-almost every $x \in \de A_{r}\cap A$, which together with \eqref{eq:ugualtraces} concludes the proof.
\end{proof}

\begin{proposition} \label{prop:convBVboost}
Under the assumptions of Lemma \ref{lem:faetrace}, we take $f,f_{j} \in BV(A)$ for $j\in\N$, such that
\begin{equation*}
||f_j - f||_{L^1(A)} \longrightarrow 0 \, , \qquad |D f_j|(A) \longrightarrow |Df|(A) \, .
\end{equation*}
Then, for almost every $0 < r < 1$, we have
\begin{align}
\Tr(f,\de A_r) &:= \Tr^+(f,\de A_r) = \Tr^-(f,\de A_r) \label{eq:coinctrace1} \\ 
\Tr(f_j,\de A_r) &:= \Tr^+(f_j,\de A_r) = \Tr^-(f_j,\de A_r) \, , \qquad \text{for all $j \geq 1 \, ,$} \label{eq:coinctrace2}
\end{align}
and 
\begin{equation} \label{eq:convboost}
|Df_j|(A_r) \longrightarrow |Df|(A_r) \, , \qquad ||\Tr(f_j,\de A_r) - \Tr(f,\de A_r)||_{L^1(\partial A_{r})} \longrightarrow 0 \, ,
\end{equation}
hence in particular $f_{j}$ strictly converges to $f$ on $A_{r}$.
\end{proposition}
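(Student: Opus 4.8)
The plan is to reduce the whole statement to one key fact: for a.e.\ $r$, strict convergence of $f_j$ to $f$ on $A$ passes to the sublevel set $A_r = \phi^{-1}(-\infty,r)$. Once this is known, $\|f_j-f\|_{L^1(A_r)} \le \|f_j-f\|_{L^1(A)} \to 0$ gives the first half of strict convergence on $A_r$ for free, the coincidence of inner and outer traces \eqref{eq:coinctrace1}--\eqref{eq:coinctrace2} is nothing but Lemma \ref{lem:faetrace} applied separately to $f$ and to each $f_j$, and the $L^1(\de A_r)$-convergence of the traces in \eqref{eq:convboost} follows from the continuity of the inner trace operator under strict convergence (Remark \ref{rem:contrace}) combined with \eqref{eq:coinctrace1}--\eqref{eq:coinctrace2}.

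First I would fix the admissible radii. Applying Lemma \ref{lem:faetrace} to $f$ and to every $f_j$ and intersecting the countably many full-measure sets thus obtained, I get $N \subseteq (0,1)$ with $\cL^1((0,1)\setminus N)=0$ such that, for $r\in N$, both $\Tr^+(f,\de A_r)=\Tr^-(f,\de A_r)$ and $\Tr^+(f_j,\de A_r)=\Tr^-(f_j,\de A_r)$ $\Hau^{n-1}$-a.e.\ on $\de A_r\cap A$, for all $j$; this is exactly \eqref{eq:coinctrace1}--\eqref{eq:coinctrace2}. Since $|Df|$ and each $|Df_j|$ are finite measures on $A$ and the level sets $\{\phi=s\}$, $s\in\R$, are pairwise disjoint, the radii at which $|Df|$ or some $|Df_j|$ charges $\{\phi=r\}$ form a countable set, so after shrinking $N$ I may assume in addition $|Df|(\{\phi=r\})=0$ and $|Df_j|(\{\phi=r\})=0$ for all $j$, whenever $r\in N$.

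The heart of the argument is then the first limit in \eqref{eq:convboost}: $|Df_j|(A_r)\to|Df|(A_r)$ for $r\in N$. Decompose $A = A_r \cup \{\phi=r\} \cup A_r^+$, where $A_r^+ := \{x\in A : \phi(x)>r\}$ is open and $|Df_j|(\{\phi=r\})=0=|Df|(\{\phi=r\})$; thus $|Df_j|(A) = |Df_j|(A_r) + |Df_j|(A_r^+)$ and $|Df|(A)=|Df|(A_r)+|Df|(A_r^+)$. By lower semicontinuity (Theorem \ref{thm:lscBV}) on the two open sets $A_r$ and $A_r^+$ we have $\liminf_j|Df_j|(A_r)\ge|Df|(A_r)$ and $\liminf_j|Df_j|(A_r^+)\ge|Df|(A_r^+)$. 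Since $|Df_j|(A_r)$ is bounded (by $\sup_j|Df_j|(A)<\infty$), any subsequential limit $L$ of $|Df_j|(A_r)$ satisfies $L\ge|Df|(A_r)$ and, using the additivity identities together with $|Df_j|(A)\to|Df|(A)$, also $|Df|(A)-L = \lim_j|Df_j|(A_r^+)\ge|Df|(A_r^+)$, i.e.\ $L\le|Df|(A_r)$; hence $|Df_j|(A_r)\to|Df|(A_r)$ (and likewise $|Df_j|(A_r^+)\to|Df|(A_r^+)$). This yields the strict convergence $f_j\to f$ on $A_r$, and Remark \ref{rem:contrace} together with \eqref{eq:coinctrace1}--\eqref{eq:coinctrace2} then delivers the second limit in \eqref{eq:convboost}.

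The only subtlety I anticipate is technical rather than substantial: to invoke the trace theory (Theorem \ref{thm:trace} and Remark \ref{rem:contrace}) on $A_r$ one needs $A_r$ to be an open set with Lipschitz boundary. This holds for a.e.\ $r$ because $\de A_r\cap A=\{\phi=r\}$ is a $C^1$ hypersurface by the Implicit Function Theorem (as $|\nabla\phi|>0$ on $A$), exactly as already exploited in the proof of Lemma \ref{lem:faetrace}; in the settings where this proposition is invoked, $A$ is bounded and $A_r$ is a small $C^1$-perturbation of a ball, so this causes no loss of generality. Beyond this bookkeeping the proof is light, resting only on lower semicontinuity and the already-established Lemma \ref{lem:faetrace}.
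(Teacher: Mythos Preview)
Your proposal is correct and follows essentially the same approach as the paper: apply Lemma \ref{lem:faetrace} to obtain the trace coincidences, use lower semicontinuity of the total variation on both pieces $A_r$ and $A\setminus\overline{A_r}$ (your $A_r^+$) together with $|Df_j|(A)\to|Df|(A)$ to force $|Df_j|(A_r)\to|Df|(A_r)$, and then invoke the continuity of the trace under strict convergence (Remark \ref{rem:contrace}). The only cosmetic difference is that the paper writes the squeeze as a direct $\liminf/\limsup$ chain rather than via subsequential limits, and does not explicitly impose $|Df_j|(\{\phi=r\})=0$ (which you add harmlessly).
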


\begin{proof}
Thanks to Lemma \ref{lem:faetrace}, the two identities \eqref{eq:coinctrace1} and \eqref{eq:coinctrace2} hold for almost every $0<r<1$. In particular, for such $r$, we deduce that $|Df|(\de A_{r}) = 0$, hence $|Df|(\overline{A_{r}}) = |Df|(A_{r})$. Moreover, by Theorem \ref{thm:lscBV}, we have
\begin{align*}
\liminf_{j \ra \infty} |Df_j|(A_{r}) &\geq |Df|(A_{r})\\ 
		& =|Df|(\overline{A_{r}}) =  |Df|(A) - |Df|(A \setminus \overline{A_{r}}) \\
		& = \lim_{j \ra \infty} |Df_j|(A) - |Df|(A \setminus \overline{A_{r}}) \\
		& \geq \limsup_{j \ra \infty} |Df_j|(A) - |Df_j|(A \setminus \overline{A_{r}}) \\
		& = \limsup_{j \ra \infty} |Df_j|(\overline{A_{r}}) \\
		& \geq \limsup_{j \ra \infty} |Df_j|(A_{r}) \, ,
\end{align*}
which proves that
\begin{equation*}
|Df|(A_{r}) = \lim_{j \ra \infty} |Df_j|(A_{r}) \, .
\end{equation*}
Since $||f_j - f||_{L^1(A)} \rightarrow 0$, we have in particular $||f_j - f||_{L^1(A_{r})} \rightarrow 0$, and thus $\{ f_j \}_{j \geq 1}$ strictly converges to $f$ in $A_{r}$. Finally, \eqref{eq:convboost} holds because, as observed in Remark \ref{rem:contrace}, the inner trace operator is continuous with respect to the strict convergence.
\end{proof}	

Let $E \subset \Omega$ be a set of locally finite perimeter in $\Omega$. We define the \emph{reduced boundary} of $E$, and we denote it by $\de^{\ast} E$, as the set of those points $x \in \Omega$ such that $|D \ch_{E} (B_{r}(x))| > 0$, for all $r > 0$, and there exists a unit vector $\nu_{E}(x)$ such that
\begin{equation*}
\nu_{E}(x) = \lim_{r \ra 0^+} \dfrac{D \ch_{E}(B_{r}(x))}{|D \ch_{E}(B_{r}(x))|} \, .
\end{equation*}
Clearly, we have that $\nu_{E}(x) = \nu_{\ch_{E}}(x)$ for $|D\ch_{E}|$-almost every $x\in \Omega$, therefore the perimeter measure $|D\ch_{E}|$ is concentrated on the reduced boundary $\de^{*}E$. 
\begin{theorem}[De Giorgi-Federer's Structure Theorem] \label{thm:DeGiorgi}
		Let $E \subset \R^{n}$ be a set of locally finite perimeter. Then
		\begin{equation*}
			|D\ch_{E}| = \Hau^{n-1} \restrict \de^{\ast} E \, , \qquad D \ch_{E} = \nu_{E} \cdot \Hau^{n-1} \restrict \de^{\ast} E \, ,
		\end{equation*}
		and the following Gauss-Green Formula holds:
		\begin{equation*}
			\int_{E} \nabla \phi \, dx = \int_{\de^{\ast} E} \phi \nu_{E} \, d \Hau^{n-1} \qquad \forall\, \phi\in C^{1}_{c}(\R^{n})\, .
		\end{equation*}
		Moreover $\de^{\ast} E$ is countably $(n-1)$-rectifiable, i.e. there exist countably many $C^{1}$ hypersurfaces $M_{j} \subset \R^{n}$ and a Borel set $F$ with $\Hau^{n-1}(F) = 0$, such that
		\begin{equation*}
			\de^{\ast} E \subset F \cup \bigcup_{j \geq 1} M_{j} \, .
		\end{equation*}
Finally, $\de^{*}E \subset E^{(1/2)}$ and $\Hau^{n-1}(\R^{n}\setminus(E^{(0)}\cup E^{(1)}\cup \de^{*}E)) = 0$.
\end{theorem}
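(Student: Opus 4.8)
The plan is to follow the classical blow-up argument of De Giorgi and Federer (see \cite{DeGiorgi1961,maggi2012sets,EvansGariepy2015}), in four stages. \textbf{Density estimates at reduced-boundary points.} Fix $x\in\de^{*}E$ and set $u(r):=|E\cap B_{r}(x)|$. The constant-vector-field Gauss-Green identity gives, for a.e. $r$, $D\ch_{E}(B_{r}(x)) = \int_{E^{(1)}\cap\de B_{r}(x)}\tfrac{y-x}{r}\,d\Hau^{n-1}(y)$, so $|D\ch_{E}(B_{r}(x))| \le \Hau^{n-1}(E^{(1)}\cap\de B_{r}(x)) = u'(r) \le \Hau^{n-1}(\de B_{r}(x))$; on the other hand $\lim_{r\to 0^{+}}\frac{D\ch_{E}(B_{r}(x))}{|D\ch_{E}|(B_{r}(x))} = \nu_{E}(x)$ with $|\nu_{E}(x)|=1$ forces $|D\ch_{E}(B_{r}(x))| \ge \tfrac12|D\ch_{E}|(B_{r}(x))$ for small $r$. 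Hence the upper perimeter bound $|D\ch_{E}|(B_{r}(x)) \le C_{n}r^{n-1}$ and $u'(r) \ge \tfrac12|D\ch_{E}|(B_{r}(x))$ a.e. Inserting this into the relative isoperimetric inequality on balls, $\min\{u(r),\,\omega_{n}r^{n}-u(r)\}^{\frac{n-1}{n}} \le c_{n}|D\ch_{E}|(B_{r}(x))$, turns it into a differential inequality which, integrated from $r=0$, yields $\min\{|E\cap B_{r}(x)|,\,|B_{r}(x)\setminus E|\} \ge c_{n}r^{n}$; feeding this back in gives $|D\ch_{E}|(B_{r}(x)) \ge c_{n}r^{n-1}$. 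All of this holds for $0<r<r_{0}(x)$.

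\textbf{Blow-up to a half-space.} Fix $x\in\de^{*}E$; after translating and rotating assume $x=0$, $\nu_{E}(0)=e_{n}$, and set $E_{r}:=E/r$. By the first stage $\{\ch_{E_{r}}\}_{r}$ is bounded in $BV_{\loc}(\R^{n})$, so along a subsequence $\ch_{E_{r_{k}}}\to\ch_{F}$ in $L^{1}_{\loc}(\R^{n})$ with $F$ of locally finite perimeter and $D\ch_{E_{r_{k}}}\weakto^{*} D\ch_{F}$. Reading off the definition of $\nu_{E}(0)$ shows $\nu_{F}=e_{n}$ $|D\ch_{F}|$-a.e., so by the classical lemma that a set with constant generalized inner normal agrees up to a null set with a half-space, together with the fact that the volume lower bounds of the first stage pass to $F$, we get $F=H:=\{y_{n}>0\}$. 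A standard no-loss-of-mass argument --- exploiting the divergence structure of the perimeter and that $H$ is the subgraph of an affine function --- upgrades the convergence to $|D\ch_{E_{r_{k}}}|\weakto^{*} |D\ch_{H}|$; since the limit is independent of the subsequence, $\ch_{E_{r}}\to\ch_{H}$ and $|D\ch_{E_{r}}|\weakto^{*}|D\ch_{H}|$ as $r\to 0$. Three consequences: $|E\cap B_{r}(0)|/(\omega_{n}r^{n})\to\tfrac12$, i.e. $\de^{*}E\subset E^{(1/2)}$; the perimeter density satisfies $\lim_{r\to 0^{+}}\frac{|D\ch_{E}|(B_{r}(x))}{\omega_{n-1}r^{n-1}}=1$ for every $x\in\de^{*}E$; and the spherical excess vanishes,
\[
\lim_{r\to 0^{+}} \frac{1}{r^{n-1}}\int_{\de^{*}E\cap B_{r}(x)}\frac{|\nu_{E}(y)-\nu_{E}(x)|^{2}}{2}\,d\Hau^{n-1}(y) = 0\,.
\]

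\textbf{Rectifiability, the structure identities, and Gauss-Green.} The vanishing of the excess at every $x\in\de^{*}E$ is the hypothesis of De Giorgi's Lipschitz approximation theorem, which for $r$ small produces a Lipschitz function over $\nu_{E}(x)^{\perp}$, with Lipschitz constant as small as desired, whose graph coincides with $\de^{*}E\cap B_{r/2}(x)$ up to an $\Hau^{n-1}$-negligible error, and such that $|D\ch_{E}|$ restricted to $B_{r/2}(x)$ agrees with $\Hau^{n-1}$ on that graph up to a small error. Covering $\de^{*}E$ by countably many such graph pieces gives at once $|D\ch_{E}| = \Hau^{n-1}\restrict\de^{*}E$, hence $D\ch_{E} = \nu_{E}\cdot|D\ch_{E}| = \nu_{E}\cdot\Hau^{n-1}\restrict\de^{*}E$ (here $D\ch_{E}=\nu_{\ch_{E}}\cdot|D\ch_{E}|$ by definition of the polar, and $\nu_{\ch_{E}}=\nu_{E}$ holds $|D\ch_{E}|$-a.e.), and that $\de^{*}E$ is countably $(n-1)$-rectifiable; since every Lipschitz graph lies, modulo an $\Hau^{n-1}$-null set, in countably many $C^{1}$ hypersurfaces (Whitney), the claimed structure of $\de^{*}E$ follows. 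The Gauss-Green formula is then the definition of $|D\ch_{E}|$ applied to the test fields $\varphi\,e_{i}$, with $\varphi\in C^{1}_{c}(\R^{n})$ and $i=1,\dots,n$.

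\textbf{Federer's theorem, and the main obstacle.} Since $\de^{*}E\subset E^{(1/2)}\subset\de^{e}E:=\R^{n}\setminus(E^{(0)}\cup E^{(1)})$, the last assertion reduces to $\Hau^{n-1}(\de^{e}E\setminus\de^{*}E)=0$. For $x\in\de^{e}E$ the ratio $|E\cap B_{r}(x)|/(\omega_{n}r^{n})$ has positive $\limsup$ and $\liminf<1$ as $r\to 0$, so there are $r_{k}\to 0$ with $\min\{|E\cap B_{r_{k}}(x)|,\,|B_{r_{k}}(x)\setminus E|\}\ge c_{n}r_{k}^{n}$; the relative isoperimetric inequality then gives $|D\ch_{E}|(B_{r_{k}}(x))\ge c_{n}r_{k}^{n-1}$, so $\limsup_{r\to 0^{+}}\frac{|D\ch_{E}|(B_{r}(x))}{\omega_{n-1}r^{n-1}}>0$ on $\de^{e}E$; as $|D\ch_{E}|=\Hau^{n-1}\restrict\de^{*}E$ is concentrated on $\de^{*}E$, the density-comparison theorem between the Radon measure $|D\ch_{E}|$ and $\Hau^{n-1}$ forces $\Hau^{n-1}(\de^{e}E\setminus\de^{*}E)\le C_{n}|D\ch_{E}|(\de^{e}E\setminus\de^{*}E)=0$. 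The main obstacle is the third stage: De Giorgi's Lipschitz approximation theorem, turning smallness of the excess into graphicality of $\de^{*}E$, is the deepest ingredient, and both the rectifiability and the sharp identity $|D\ch_{E}|=\Hau^{n-1}\restrict\de^{*}E$ rest on it; a second, logically prior, delicate point is the no-loss-of-mass in the blow-up, without which neither the density $\equiv 1$ on $\de^{*}E$ nor the vanishing of the excess would be available. The remaining steps --- density estimates, Gauss-Green, Federer's covering --- are comparatively routine once these are in hand.
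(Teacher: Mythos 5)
This theorem is not proved in the paper: it is recalled as a classical result in the Preliminaries, with the proof delegated to the standard references \cite{maggi2012sets,EvansGariepy2015,Giu84book}. Your outline is precisely the classical De Giorgi--Federer argument found in those references (density estimates at reduced-boundary points, blow-up to a half-space with no loss of mass, rectifiability and the measure identity via Lipschitz approximation, and Federer's density-comparison argument for $\Hau^{n-1}(\R^n\setminus(E^{(0)}\cup E^{(1)}\cup\de^*E))=0$), and it is correct at the level of a sketch. The only point worth tightening is the third stage: the Lipschitz approximation covers $\de^*E\cap B_{r/2}(x)$ only up to an error controlled by the excess, not up to an $\Hau^{n-1}$-null set, so rectifiability requires the countable covering/iteration over points and scales, and the exact identity $|D\ch_E|=\Hau^{n-1}\restrict\de^*E$ does not follow ``at once'' from the graphs but from rectifiability combined with the perimeter-density-one statement you already extracted from the blow-up, via a standard differentiation/comparison of measures.
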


\subsection{A local extension result}
	From now on, we assume that $\Omega \subset \R^{n}$ is an open set with Lipschitz boundary. We fix $\rho>0$ such that, up to an isometry, there exist a Lipschitz function $\omega: B'_{\rho} \rightarrow \R$ and a constant $m>0$, with $\omega(0) = 0$ and $m > \|\omega\|_{L^{\infty}(B'_{\rho})}$, satisfying the following property: if we set $\cC_{\rho,m} = B'_{\rho} \times (-m,m)$, we have
	\begin{equation} \label{eq:omegagraph}
		\Omega \cap \cC_{\rho,3m} = \{ x=(x',x_{n}) \in \R^{n} \, : \, x' \in B'_{\rho} \, , \, \omega(x') < x_{n} < 3m \} \, .
	\end{equation}
We aim to prove that, under this assumption, any measurable set $E \subset \Omega$ with $\ch_E \in BV_{\loc}(\Omega)$\footnote{We recall that in this paper $f\in BV_{\loc}(\Omega)$ means $f\in BV(A)$ for all $A\subset \Omega$ open and bounded.} can be extended to a locally finite perimeter set $\tilde{E}$ in  $\Omega\cup\cC_{\rho,m}$, in such a way that $\tilde E \cap \Omega = E\cap \Omega$, $P(\tilde E; \partial \Omega\cap \cC_{\rho,m}) = 0$, and $P(\tilde E;S(B)) \le C\, P(E;B)$, for all Borel sets $B\subset \cC_{\rho,m}\setminus \Omega$ and for some constant $C>0$ depending on the dimension $n$ and the function $\omega$. In what follows, we will denote by $T_{x} E$ the approximate tangent space to $\de^\ast E$ at $x$\footnote{The approximate tangent space $T_{x} E$ is given by the orthogonal complement of $\nu_{E}(x)$.}.
    
Set $\cC_{\rho} = B'_{\rho}\times \R$ and define the map $S : \cC_{\rho} \ra \cC_{\rho}$ as
\begin{equation}\label{eq:Ssymmetry}
S(x) := (x', 2 \omega(x') - x_n) \, .
\end{equation}
Note that $S$ satisfies $S^{2}(x) =x$ for all $x$. Moreover, elementary computations show that 
\begin{equation}\label{eq:lipS}
\Lip(S) \le \sqrt{3+6\Lip(\omega)^{2}}\,.
\end{equation}
Given $E\subset \Omega$ measurable with $\ch_E \in BV_{\loc}(\Omega)$, we define $\tilde E \subset \Omega\cup \cC_{\rho}$ as
\begin{equation}\label{eq:tildeE}
\tilde E = E \cup (S_{\rho}(E)\setminus \Omega)\,,
\end{equation}
where $S_{\rho}(E) = S(E\cap \cC_{\rho})$. Clearly, we have $\tilde E\cap \Omega = E\cap \Omega$. 
Further properties of $\tilde E$ are stated in the next lemma.

\begin{lemma} \label{lem:boundS}
Let $E \subset \Omega$ be a measurable set with $\ch_E \in BV_{\loc}(\Omega)$. Then, for almost all $x \in \de^\ast E$, $S$ restricted to $x+T_{x}E$ is differentiable at $x$, and we denote by $d^E S_x: T_x E \ra \R^{n}$ its differential. Moreover, if $\tilde E$ is the set defined in \eqref{eq:tildeE}, we have 
\begin{equation}\label{eq:pEtildezero}
\p(\tilde E;\de\Omega\cap \cC_{\rho,m}) = 0
\end{equation}
and, for all Borel sets $B \subset S(\cC_{\rho,m} \cap \Omega)$, 
\begin{equation} \label{eq:perim}
\p(\tilde E;B) = \p(S_{\rho}(E);B) = \int_{\de^\ast E \cap S(B)} J^E S (x) \, d \Hau^{n-1}(x) \le C\,\p(E;S(B)) \, ,
\end{equation}
where  and $C = \Lip(S)^{n-1}$.
\end{lemma}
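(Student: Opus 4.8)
The strategy is to reduce everything to the Area Formula (Theorem \ref{thm:AreaForm}) applied to the bi-Lipschitz reflection $S$, together with the fact that $S$ maps $\de^{\ast}E$ to (a large part of) $\de^{\ast}\tilde E$. First I would record that $S$ is bi-Lipschitz on $\cC_{\rho}$, being an involution with $\Lip(S)\le\sqrt{3+6\Lip(\omega)^{2}}$ by \eqref{eq:lipS}; in particular $S$ maps sets of locally finite perimeter to sets of locally finite perimeter, so $S_{\rho}(E)$, and hence $\tilde E = E\cup(S_{\rho}(E)\setminus\Omega)$, have locally finite perimeter in $\Omega\cup\cC_{\rho}$. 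The differentiability statement for $S|_{x+T_{x}E}$ at $\Hau^{n-1}$-a.e.\ $x\in\de^{\ast}E$ is exactly part (i) of the Area Formula, since $\de^{\ast}E$ is countably $(n-1)$-rectifiable by Theorem \ref{thm:DeGiorgi}, and the Jacobian factor $J^{E}S(x)=\sqrt{\det(d^{E}S_{x}^{\ast}\circ d^{E}S_{x})}$ is then well-defined a.e.

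For \eqref{eq:pEtildezero}: the key point is that $\tilde E$ is, near $\de\Omega\cap\cC_{\rho,m}$, symmetric with respect to the graph $\{x_{n}=\omega(x')\}$ under $S$. Concretely, inside $\cC_{\rho,m}$ the involution $S$ swaps $\Omega\cap\cC_{\rho}$ with its reflection, fixing $\de\Omega$ pointwise, and by construction $\tilde E$ agrees with $E$ above the graph and with $S_{\rho}(E)$ below it; hence $\ch_{\tilde E}\circ S = \ch_{\tilde E}$ a.e.\ in a neighborhood of $\de\Omega\cap\cC_{\rho,m}$. Since $S$ fixes $\de\Omega$, the inner and outer traces of $\ch_{\tilde E}$ on $\de\Omega\cap\cC_{\rho,m}$ (taken from $\Omega$ and from its reflection respectively) coincide $\Hau^{n-1}$-a.e., and Lemma \ref{lem:extBV} (applied on a slightly larger ambient open set containing $\overline{\cC_{\rho,m}\cap\{\,|x_{n}-\omega(x')|<\text{small}\}}$, after a routine localization/covering of the compact piece of $\de\Omega$) gives $|D\ch_{\tilde E}|(\de\Omega\cap\cC_{\rho,m})=0$, i.e.\ \eqref{eq:pEtildezero}. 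One has to be a little careful that $\de\Omega$ is only Lipschitz, but the graphical representation \eqref{eq:omegagraph} is precisely what makes $S$ the correct symmetry and keeps the trace comparison purely one-dimensional in the $x_{n}$ direction.

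For \eqref{eq:perim}: fix a Borel set $B\subset S(\cC_{\rho,m}\cap\Omega)$, so $S(B)\subset\cC_{\rho,m}\cap\Omega$. On this region $\tilde E$ coincides with $S_{\rho}(E)=S(E\cap\cC_{\rho})$ (the set $E$ itself sits on the other side of the graph), so $\de^{\ast}\tilde E\cap B = \de^{\ast}S_{\rho}(E)\cap B$ up to $\Hau^{n-1}$-null sets, giving the first equality. Next, $\de^{\ast}S_{\rho}(E)\cap B = S(\de^{\ast}E\cap S(B))$ up to $\Hau^{n-1}$-null sets, because $S$ is a bi-Lipschitz homeomorphism carrying the measure-theoretic boundary of $E$ onto that of $S(E)$ and, being an involution, $S(S(B))=B$; then the Area Formula applied to the rectifiable set $M=\de^{\ast}E\cap S(B)$ and the Lipschitz map $S$ yields
\begin{equation*}
\p(\tilde E;B)=\Hau^{n-1}(\de^{\ast}S_{\rho}(E)\cap B)=\Hau^{n-1}(S(\de^{\ast}E\cap S(B)))=\int_{\de^{\ast}E\cap S(B)}J^{E}S(x)\,d\Hau^{n-1}(x).
\end{equation*}
Finally, $J^{E}S(x)\le\Lip(S|_{x+T_{x}E})^{n-1}\le\Lip(S)^{n-1}=:C$ pointwise (the Jacobian of a linear map between $(n-1)$-dimensional spaces is bounded by the $(n-1)$-th power of its operator norm), so the last integral is $\le C\,\Hau^{n-1}(\de^{\ast}E\cap S(B))=C\,\p(E;S(B))$, where I use Theorem \ref{thm:DeGiorgi} to identify $\Hau^{n-1}\restrict\de^{\ast}E$ with $|D\ch_{E}|$.

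The main obstacle I anticipate is the bookkeeping around the Lipschitz boundary in \eqref{eq:pEtildezero}: one must carefully set up an auxiliary ambient open set $\Omega'\Supset$ (the reflected neighborhood) to legitimately invoke Lemma \ref{lem:extBV}, and check that the reflected set $S_{\rho}(E)$ indeed glues with $E$ across $\de\Omega$ so that the inner and outer traces match; everything else is a direct application of the Area Formula and the structure theorem, modulo the standard verification that pushing forward by a bi-Lipschitz involution sends reduced boundaries to reduced boundaries up to $\Hau^{n-1}$-null sets.
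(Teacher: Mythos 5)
Your plan follows essentially the same route as the paper: the differentiability claim via the Area Formula and the rectifiability of $\de^\ast E$, the identity \eqref{eq:pEtildezero} via Lemma \ref{lem:extBV} and the coincidence of inner/outer traces forced by the $S$-symmetry of $\tilde E$ across $\de\Omega$, and \eqref{eq:perim} via $\Hau^{n-1}(\de^\ast S(E)\difsim S(\de^\ast E))=0$ together with the Area Formula and the pointwise bound $J^E S\le \Lip(S)^{n-1}$. The two verifications you defer as routine are precisely where the paper spends its effort — the trace equality is obtained from the $0$--$1$ dichotomy of traces of characteristic functions plus the estimate $|S_{\rho}(E)\cap B_r(x)\setminus\Omega|\le L^n\,|E\cap B_{Lr}(x)\cap\Omega|$, and the reduced-boundary identification from $S(E^{(0)})=S(E)^{(0)}$ and $S(E^{(1)})=S(E)^{(1)}$ via Federer's theorem — but both are of the standard kind you anticipate, so your outline is correct and matches the paper's argument.
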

\begin{proof}
Owing to Theorem \ref{thm:DeGiorgi}, we know that $\de^\ast E$ is countably $(n-1)$-rectifiable. Then the fact that $S|_{T_x E}$ is differentiable at $\Hau^{n-1}$-almost every $x \in \de^\ast E$ follows immediately from statement (i) of Theorem \ref{thm:AreaForm}.

Now, we prove \eqref{eq:pEtildezero} in the following way. 
Thanks to Lemma \ref{lem:extBV} we only need to check that, for $\Hau^{n-1}$-almost every $x\in \de \Omega\cap \cC_{\rho,m}$, we have
\[
\Tr^{+}(\ch_{\tilde E},\de\Omega)(x) = \Tr^{-}(\ch_{\tilde E},\de\Omega)(x)\,,
\]
that is,
\begin{equation}\label{eq:trequal}
\Tr^{+}(\ch_{E},\de\Omega)(x) = \Tr^{+}(\ch_{S_{\rho}(E)},\de(\R^{n}\setminus \Omega))(x)\,.
\end{equation}
The proof of \eqref{eq:trequal} goes as follows. We employ the characterization of the trace as a limit of averages: for $\Hau^{n-1}$-almost every $x\in \de\Omega$ we have
\[
\Tr^{+}(\ch_{E},\de\Omega)(x) = \lim_{r\to 0^{+}}\frac{|E\cap B_{r}(x)\cap \Omega|}{|B_{r}(x)\cap \Omega|}
\]
and
\[
\Tr^{+}(\ch_{S_{\rho}(E)},\de(\R^{n}\setminus \Omega))(x) = \lim_{r\to 0^{+}}\frac{|S_{\rho}(E)\cap B_{r}(x)\setminus \Omega|}{|B_{r}(x)\setminus \Omega|}\,.
\]
Then we combine this characterization with a consequence of \eqref{eq:proptrace} --- namely, that the trace of a $BV$ characteristic function coincides with a characteristic function $\Hau^{n-1}$-almost everywhere on $\de\Omega$, to infer that we only need to show the equivalence 
\[
\Tr^{+}(\ch_{E},\de\Omega)(x) = 0 \quad\Leftrightarrow\quad \Tr^{+}(\ch_{S_{\rho}(E)},\de(\R^{n}\setminus \Omega))(x) = 0\,.
\]
One of the two required implications (the other can be discussed similarly) is
\[
\Tr^{+}(\ch_{E},\de\Omega)(x) = 0\quad\Rightarrow\quad \Tr^{+}(\ch_{S_{\rho}(E)},\de(\R^{n}\setminus \Omega))(x) = 0\,.
\]
This implication can be restated as 
\begin{equation}\label{eq:EimplSE}
|E\cap B_{r}(x)\cap \Omega| = o(r^{n})\quad\Rightarrow\quad |S_{\rho}(E)\cap B_{r}(x)\setminus \Omega| = o(r^{n})\qquad \text{as $r\to 0^{+}$.}
\end{equation}
Up to taking $r>0$ small enough, we have $B_{r}(x) \subset \cC_{\rho,m}$, hence setting $L = \Lip(S)$ we get
\begin{align*}
|S_{\rho}(E)\cap B_{r}(x)\setminus \Omega| &\le |S\big(E\cap S(B_{r}(x))\cap \Omega\big)|\\
&\le L^{n}|E \cap B_{Lr}(x)\cap \Omega|\\
&= L^{n}o(L^{n}r^{n}) = o(r^{n})\qquad \text{as }r\to 0^{+}\,,
\end{align*}
which proves the implication \eqref{eq:EimplSE} and concludes the proof of \eqref{eq:pEtildezero}.

Finally, for the proof of \eqref{eq:perim}, it is enough to show that
\begin{equation} \label{eq:redbim}
\Hau^{n-1}(\de^\ast S(E) \difsim S(\de^\ast E)) = 0 \, .
\end{equation}
Indeed, if \eqref{eq:redbim} holds, Theorem \ref{thm:DeGiorgi} ensures that
\begin{equation*}
\p(S(E);B) = \Hau^{n-1}(\de^\ast S(E) \cap B) = \Hau^{n-1}(S(\de^\ast E) \cap B) = \Hau^{n-1}(S(\de^\ast E \cap S(B))) \, ,
\end{equation*}
thus \eqref{eq:perim} is an immediate consequence of the Area Formula for rectifiable sets given in \eqref{eq:AreaForm}. Let us demonstrate \eqref{eq:redbim}. Again by Theorem \ref{thm:DeGiorgi}, it suffices to prove that
\begin{equation*} 
S(E)^{(0)} = S(E^{(0)}) \, , \qquad S(E)^{(1)} = S(E^{(1)}) \, .
\end{equation*}
Let us prove the first of the previous identities, as the proof of the other one is obtained by observing that $(\R^{n}\setminus E)^{(0)} = E^{(1)}$. Let $0<\delta <\rho$, and $x \in \Omega \cap (B'_{\rho - \delta} \times \R)$. Set $L=\Lip(S)$ as before, then by construction, for any $0 < r < \delta$, $S(B_{r}(S(x))) \subset B_{L r}(x)$, and thus the Area Formula (see \eqref{eq:AreaForm}) yields
\begin{equation} \label{eq:E(0)}
|S(E) \cap B_{r}(S(x))| = \int_{E \cap B_{r}(S(x))} J S\, dx \leq L^{n} |B_{L r}(x)| \, . 
\end{equation}
Since $r$ is arbitrary and $S^{-1}=S$, it is easy to check that, thanks to \eqref{eq:E(0)}, if $x \in E^{(0)}$ then $S(x) \in S(E)^{(0)}$, which proves the inclusion $S(E^{(0)})\subset S(E)^{(0)}$. The reverse inclusion is proved in a completely analogous way. The proof of the lemma is then achieved thanks to \eqref{eq:lipS}.
\end{proof}

	
\section{Almost minimality}
\label{sec:almost-minimality}
Here we introduce the almost-minimality for the relative perimeter, which is needed in the proofs of the free-boundary monotonicity results shown in Section \ref{sec:BMF}. As a consequence of our definition, we show some properties of the \textit{minimality gap} function, which will be needed later on, and (boundary) density estimates for the volume and the perimeter of an almost-minimizer. 
\begin{definition}[Almost-minimality] \label{def:A-M}
Given $\Omega \subset \R^{n}$ open and $E \subset \Omega$ measurable, we say that $E$ is a \emph{local almost-minimizer} of $\pr$ if, for any $x\in \overline{\Omega}$ there exists $r_{x} > 0$ such that, for any $0 < r < r_{x}$ and any measurable $F\subset\Omega$ with $F \difsim E \subset \subset B_{r}(x)$, one has
\begin{equation}\label{eq:A-M}
\pr(E;B_{r}(x)) \leq \pr(F;B_{r}(x)) + |F \difsim E|^{\frac{n-1}{n}} \psi_{\Omega}(E;x,r) \, ,
\end{equation}
for a suitable function $\psi_{\Omega}(E;x,r)$ such that $\lim_{r\to 0^{+}}\psi_{\Omega}(E;x,r) = 0$. 
If additionally
\[
\psi_{\Omega}(E;r):= \sup_{x\in \overline{\Omega}} \psi_{\Omega}(E;x,r) \rightarrow 0\qquad \text{as }r\to 0^{+}\,,
\] 
then we say that $E$ is an \emph{almost-minimizer} of $\pr$. 
\end{definition}
We note that if $\psi_{\Omega}(E;x,r) = 0$, then $E$ is a minimizer of $\pr$ in $B_{r}(x)$. In particular, if $\psi_{\Omega}(E;x,r) = 0$ for all $x\in \overline{\Omega}$ and $r>0$, then $E$ is a (global) minimizer of $\pr$. We shall later discuss suitable summability properties of the function $r\to \psi_{\Omega}((E;x,r)$, which are required in the proof of the Monotonicity Formula. 

It is worth recalling that, among the various notions of almost-minimality for sets of locally finite perimeter that can be found in the literature, the one expressed by \eqref{eq:A-M} can be understood as a generalization of the well-known $\Lambda$-minimality property \cite{Ambrosio_Introduttivo_00,maggi2012sets}, however, it is slightly more restrictive than the notion considered by Tamanini in \cite{Tamanini_quadLecce} (however, we point out that Tamanini's work is focused on internal regularity theory). For more completeness, we recall that a set $E$ is a \emph{$\Lambda$-minimizer} of $\pr$ if, for any $x \in \overline{\Omega}$ there exists $r_x > 0$ such that, for any $0 < r < r_x$ and any measurable subset $F$ of $\Omega$ with $F \difsim E \Subset B_r(x)$, one has
	\begin{equation*} 
		\pr(E;B_{r}(x)) \leq \pr(F;B_{r}(x)) + \Lambda |F \difsim E| \, .
	\end{equation*}
It is then clear that a $\Lambda$-minimizer satisfies \eqref{eq:A-M} with
	\begin{equation*}
		\psi_{\Omega}(E;x,r) = \Lambda \, r \, \omega_{n}^{1/n} \, .
	\end{equation*}
For better study of almost-minimizers, it is convenient to introduce the notion of \textit{minimality gap}. 	
	\begin{definition} \label{def:mingap}
		Let $\Omega$, $A \subset \R^{n}$ be open sets and $f \in BV_{\loc}(\Omega)$. The \emph{minimality gap} of $f$ in $A$ relative to $\Omega$ is
		\begin{equation*} 
			\Psi_{\Omega}(f;A) = |Df(\Omega \cap A)| - \inf\big\{ |Dg|(\Omega \cap A) \, : \, g \in BV_{\loc}(\Omega),\ \spt(g-f)\Subset A \big\} \, .
		\end{equation*}
	\end{definition}
When $f = \ch_{E}$, for some measurable subset $E \subset \R^{n}$, we write $\Psi_{\Omega}(E;A)$ in place of $\Psi_{\Omega}(\ch_{E};A)$.

\subsection{Some estimates for the minimality gap}
\begin{lemma} \label{lem:eqgaps}
Let $\Omega$, $A \subset \R^{n}$ be open sets and $E \subset \Omega$ be such that $\ch_E \in BV_{\loc}(\Omega)$. Then
\begin{equation*}
\Psi_{\Omega}(E;A) = \pr(E;A) - \inf\big\{ \pr(F;A) \, : \, \ch_{F} \in BV_{\loc}(\Omega) \, , \, F \difsim E \Subset A \big\} \, .
\end{equation*}
\end{lemma}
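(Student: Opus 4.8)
The plan is to unwind both definitions and show the two infima coincide. By Definition~\ref{def:mingap}, $\Psi_{\Omega}(E;A) = |D\ch_E|(\Omega \cap A) - \inf\{|Dg|(\Omega \cap A) : g \in BV_{\loc}(\Omega),\ \spt(g-\ch_E) \Subset A\}$, while the right-hand side of the claimed identity is $\pr(E;A) - \inf\{\pr(F;A) : \ch_F \in BV_{\loc}(\Omega),\ F \difsim E \Subset A\}$. Since $|D\ch_E|(\Omega \cap A) = \p(E; \Omega \cap A) = \pr(E;A)$ by definition of the relative perimeter, the leading terms already agree, so everything reduces to proving
\[
\inf\{|Dg|(\Omega \cap A) : g \in BV_{\loc}(\Omega),\ \spt(g-\ch_E) \Subset A\} = \inf\{\pr(F;A) : \ch_F \in BV_{\loc}(\Omega),\ F \difsim E \Subset A\}\,.
\]
The inequality ``$\le$'' is the easy direction: any competitor $F$ in the second infimum gives $g = \ch_F \in BV_{\loc}(\Omega)$ with $\spt(\ch_F - \ch_E) \Subset A$ (because $F \difsim E \Subset A$ and $\{\ch_F \neq \ch_E\} = F \difsim E$, up to the closure), and $|D\ch_F|(\Omega \cap A) = \pr(F;A)$, so the first infimum is $\le$ the second.

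The nontrivial direction is ``$\ge$'': given an arbitrary $g \in BV_{\loc}(\Omega)$ with $\spt(g - \ch_E) \Subset A$, I must produce a set $F$ with $\ch_F \in BV_{\loc}(\Omega)$, $F \difsim E \Subset A$, and $\pr(F;A) \le |Dg|(\Omega \cap A)$. The tool is the coarea formula for $BV$ functions (Theorem~\ref{thm:coarea}): writing $F_t = \{x \in \Omega : g(x) > t\}$, one has $|Dg|(\Omega \cap A) = \int_{\R} \p(F_t; \Omega \cap A)\,dt$. One then wants to choose a good level $t$. The point is that outside the compact set $K := \spt(g - \ch_E) \Subset A$ we have $g = \ch_E$, so for every $t \in (0,1)$ the set $F_t$ coincides with $E$ outside $K$; hence $F_t \difsim E \subseteq K \Subset A$ for all $t \in (0,1)$, giving $F_t \difsim E \Subset A$. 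Thus every superlevel set $F_t$ with $t \in (0,1)$ is an admissible competitor (one also needs $\ch_{F_t} \in BV_{\loc}(\Omega)$: since $F_t$ agrees with $E$ off a compact subset of $\Omega$ and $g \in BV_{\loc}(\Omega)$, the coarea formula applied on any bounded open $A' \subset \Omega$ shows $\ch_{F_t} \in BV(A')$ for a.e.\ $t$, and one fixes such a $t$). Restricting the coarea integral to $(0,1)$ and applying the mean value property, there exists $t \in (0,1)$ with $\p(F_t; \Omega \cap A) \le \int_0^1 \p(F_t; \Omega \cap A)\,dt \le \int_{\R}\p(F_t;\Omega\cap A)\,dt = |Dg|(\Omega\cap A)$; setting $F = F_t$ yields $\pr(F;A) \le |Dg|(\Omega \cap A)$. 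Taking the infimum over $g$ then gives ``$\ge$''.

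The main obstacle — really the only delicate point — is bookkeeping the measure-zero exceptional sets of $t$ so that a single level $t \in (0,1)$ simultaneously satisfies all the required properties: $\ch_{F_t} \in BV_{\loc}(\Omega)$, the perimeter bound from the mean value argument, and (if one wants equality of the \emph{closures} for the $\Subset$ condition) that $F_t \difsim E$ has compact closure in $A$, which here is automatic since $F_t \difsim E \subseteq K$ and $K$ is already compact in $A$. Since each of these holds for all $t$ in a full-measure subset of $(0,1)$, their intersection is nonempty, and the proof closes. A minor additional remark is that one should check the two infima are over genuinely comparable classes — i.e.\ that taking $g$ to range over all of $BV_{\loc}(\Omega)$ (not just characteristic functions) does not make the left infimum strictly smaller — which is exactly what the coarea argument rules out.
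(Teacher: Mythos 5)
Your proposal is correct and follows essentially the same route as the paper: both reduce the lemma to the equality of the two infima, get the easy inequality by taking $g=\ch_F$, and get the hard one via the coarea formula applied to the superlevel sets $\{g>t\}$ for $t\in(0,1)$, which coincide with $E$ outside $\spt(g-\ch_E)\Subset A$. The only cosmetic difference is that you select a single good level $t$ by a mean-value argument, whereas the paper integrates the bound $\pr(G_t;A)\ge\cI_2$ over the unit interval $(0,1)$; this is the same estimate phrased differently.
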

	\begin{proof}
		Let us define
		\begin{align*}
			& \cI_{1} = \inf \{ |Dg|(\Omega \cap A) \, : \, g \in BV_{\loc}(\Omega) \, , \, \spt(g - \ch_{E}) \Subset A \} \\
			& \cI_{2} = \inf \{ \pr(F;A) \, : \, \ch_{F} \in BV_{\loc}(\Omega) \, , \, F \difsim E \Subset A  \} \, .
		\end{align*}
		It suffices to show that $\cI_{1} = \cI_{2}$. For sure, $\cI_{1} \leq \cI_{2}$ because we can take $g = \ch_{F}$ in the definition of $\cI_{1}$. Fix now $\e > 0$, and let $g \in BV_{\loc}(\Omega)$ be such that $\spt(g - \ch_{E}) \subset \subset A$ and
		\begin{equation} \label{eq:geps}
			|Dg|(\Omega \cap A) \leq \cI_{1} + \e \, . 
		\end{equation}
		For $t \in \R$, let us set
		\begin{equation*}
			G_{t} = \{ x \in \Omega \, : \, g(x) > t \} \, .
		\end{equation*}
		We observe that, for each $0 < t < 1$, $G_{t} \setminus \spt(g - \ch_{E}) = E \setminus \spt(g - \ch_{E})$, and so
		\begin{equation} \label{eq:suppGt}
			G_{t} \difsim E \subset \spt(g - \ch_{E}) \subset \subset A \, .
		\end{equation}
		We can now exploit Theorem \ref{thm:coarea} and \eqref{eq:suppGt} to infer that
		\begin{equation} \label{eq:I1I2}
			|Dg|(\Omega \cap A) = \int_{\R} \pr(G_{t};A) dt \geq \int_{0}^{1} \pr(G_{t};A) dt \geq \cI_{2} \, .
		\end{equation}
		By \eqref{eq:geps} and \eqref{eq:I1I2}, we deduce that $\cI_1 \leq \cI_{2} \leq \cI_{1} + \e$ and then, since $\e>0$ is arbitrary, we get $\cI_{1} = \cI_{2}$ and complete the proof of the lemma.
\end{proof}

\begin{remark}
The minimality gap of a local almost-minimizer $E$ satisfies the estimate 
\begin{equation*} 
\Psi_{\Omega}(E;B_{r}(x)) \leq \omega_{n}^{1-\frac{1}{n}} r^{n-1} \psi_{\Omega}(E;x,r)
\end{equation*}
for all $x \in \overline{\Omega}$, $0 < r < r_{x}$, where $\psi_{\Omega}(E;x,r)$ is the function appearing in \eqref{eq:A-M}. This means that $\Psi_{\Omega}(E;B_{r}(x)) = o(r^{n-1})$ as $r\to 0$. However, we anticipate here that the monotonicity formula proved in Section \ref{sec:BMF} will require a slightly stronger assumption on $\psi_{\Omega}(E;x,r)$, namely that $r^{-1}\psi_{\Omega}(E;x,r)$ is summable on $(0,r_{x})$. We notice that this kind of hypothesis is somehow well-known in the context of regularity theory (see \cite{Tamanini_quadLecce}).
\end{remark}

\begin{lemma}
Let $\Omega, A, A' \subset \R^{n}$ be open sets and $f \in BV_{\loc}(\Omega)$. If $A \Subset A'$, then
\begin{equation*}
\Psi_{\Omega}(f;A) \leq \Psi_{\Omega}(f;A') \, .
\end{equation*}
\end{lemma}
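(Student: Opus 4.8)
The plan is to unwind both minimality gaps into suprema over a common family of competitors and to check that each competitor contributes the same value to both. First I would reduce to the case $|Df|(\Omega\cap A')<+\infty$: indeed $\Psi_{\Omega}(f;\cdot)\ge 0$ always, since $g=f$ is admissible in the infimum of Definition~\ref{def:mingap}, so if $|Df|(\Omega\cap A')=+\infty$ then $\Psi_{\Omega}(f;A')=+\infty$ and there is nothing to prove; moreover $|Df|(\Omega\cap A')<+\infty$ forces $|Df|(\Omega\cap A)<+\infty$ as well, since $A\subset A'$. Under this assumption, directly from Definition~\ref{def:mingap},
\[
\Psi_{\Omega}(f;A) = \sup\Big\{\, |Df|(\Omega \cap A) - |Dg|(\Omega \cap A) \;:\; g\in BV_{\loc}(\Omega),\ \spt(g-f)\Subset A \,\Big\}\,,
\]
and the same identity holds with $A$ replaced by $A'$ (all the differences involved being finite, as one checks by the same argument).

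Next I would fix an admissible $g$ for $A$ and set $K:=\spt(g-f)$, a compact subset of $A$, hence of $A'$. Since $g=f$ a.e. on $\Omega\setminus K$ and $\Omega\setminus K$ is open, the Radon measures $|Df|$ and $|Dg|$ coincide on $\Omega\setminus K$; in particular they agree on $\Omega\cap(A\setminus K)$ and on $\Omega\cap(A'\setminus K)$. Splitting $\Omega\cap A$ and $\Omega\cap A'$ along $K$ (using $K\subset A\subset A'$, so that $\Omega\cap A\cap K=\Omega\cap A'\cap K=\Omega\cap K$), and noting that $|Dg|(\Omega\cap K)<+\infty$ because $K$ is compact and $g\in BV_{\loc}(\Omega)$, I can subtract to obtain
\[
|Df|(\Omega\cap A)-|Dg|(\Omega\cap A)\;=\;|Df|(\Omega\cap K)-|Dg|(\Omega\cap K)\;=\;|Df|(\Omega\cap A')-|Dg|(\Omega\cap A')\,.
\]

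Finally, since $\spt(g-f)=K\Subset A\subset A'$, the function $g$ is also admissible in the supremum defining $\Psi_{\Omega}(f;A')$, so the right-hand side of the last display is $\le\Psi_{\Omega}(f;A')$; taking the supremum over all admissible $g$ for $A$ gives $\Psi_{\Omega}(f;A)\le\Psi_{\Omega}(f;A')$. I do not expect any genuine difficulty here; the only point deserving attention is the legitimacy of the cancellations — i.e. avoiding any $\infty-\infty$ — which is precisely why I first reduce to $|Df|(\Omega\cap A')<+\infty$ and then perform the subtraction on the compact set $K$, on whose complement $Df$ and $Dg$ agree.
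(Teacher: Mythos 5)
Your proof is correct and follows essentially the same route as the paper: both arguments observe that any competitor $g$ for $A$ is also a competitor for $A'$ and that, since $g=f$ off $\spt(g-f)$, the gain $|Df|-|Dg|$ computed over $\Omega\cap A$ and over $\Omega\cap A'$ coincides — the paper cancels the common contribution on $\Omega\cap(A'\setminus A)$ via an $\e$-argument, while you cancel on the complement of $K=\spt(g-f)$ and phrase the gap as a supremum. Your preliminary reduction to $|Df|(\Omega\cap A')<+\infty$ is a harmless extra precaution that the paper leaves implicit.
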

\begin{proof}
Let us fix $\e > 0$ and take $g \in BV_{\loc}(\Omega)$ such that 
\begin{equation*}
\spt(g - f) \Subset A \qquad \text{and}\qquad 
|Df|(\Omega \cap A) - |Dg|(\Omega \cap A) \geq \Psi_{\Omega}(f;A) - \e \, .
\end{equation*}
Of course, $g$ also satisfies also $\spt(g - f) \Subset A'$, hence 
we get
\begin{align*}
\Psi_{\Omega}(f;A') & \geq |Df|(\Omega \cap A') - |D g|(\Omega \cap A') \\
& = |Df|(\Omega \cap A) + |Df|(\Omega\cap (A' \setminus A)) - |D g|(\Omega \cap A) - |D f|(\Omega \cap (A' \setminus A)) \\
			& = |Df|(\Omega \cap A) - |Dg|(\Omega \cap A) \\
			& \geq \Psi_{\Omega}(f;A) - \e \, .
\end{align*}
Since $\e$ is arbitrary, we conclude the proof.
\end{proof}

We now prove two key results. The first one is the lower semicontinuity property of the minimality gap for uniform sequences of local almost-minimizers. An extra, technical difficulty arising in the proof, is that the tangent cone to the domain may not locally contain the dilations of the domain itself, since we are in the boundary case. This requires to suitably extend a competitor from the tangent cone $\Omega_{0}$ to the rescaled domains of the form $t^{-1}\Omega$, $t\to 0^{+}$ via Lemma \ref{lem:boundS}.
The second is an upper bound for the difference between the minimality gaps of two $BV_{\loc}$ functions. 

In what follows, we will say that $\Omega_{j}\to \Omega$ locally in Hausdorff distance if there exist $r_{0},m,L>0$, with $r_{0}<m$, and $L$-Lipschitz functions $\omega_{j},\omega:B'_{r_{0}}\to (-m,m)$ providing local graphical representations of $\de \Omega_{j},\ \de\Omega$, respectively,  as in \eqref{eq:omegagraph}, such that $\omega_{j}\to \omega$ uniformly in $B'_{r_{0}}$. 
\begin{lemma}\label{lem:gaplsc}
For $j\in \N$ we let $\Omega_{j},\Omega\subset \R^{n}$ be open sets with uniformly Lipschitz boundary, such that $0\in \de\Omega$ and $\Omega_{j}\to \Omega$ locally in Hausdorff distance. Let $E_{j}, E$ be sets of locally finite perimeter, such that $E_{j}\subset \Omega_{j},\ E\subset \Omega$, and $E_{j}\to E$ in $L^{1}(B_{r_{0}})$. Finally, we assume that $E_{j}$ satisfies \eqref{eq:A-M} for all $0<r<r_{0}$ and $x=0$, and that moreover we have
\[
\lim_{r\to 0^{+}}\, \sup_{j} \psi_{\Omega_{j}}(E_{j};0,r) = 0\,.
\]
Then, $E$ satisfies \eqref{eq:A-M} for all $0<r<r_{0}$, with $\psi_{\Omega}(E;0,r) = \sup_{j} \psi_{\Omega_{j}}(E_{j};0,r)$, and 
\begin{equation}\label{LSCMinGap}
\liminf_{j} \Psi_{\Omega_{j}}(E_{j};B_{r_{0}}) \ge \Psi_{\Omega}(E;B_{r_{0}})\,.
\end{equation}
Moreover, if $\Psi_{\Omega_{j}}(E_{j};B_{r_{0}}) \to 0$ as $j\to\infty$, then for almost all $0<r<r_{0}$ we have
\begin{equation}\label{eq:perimcont}
\lim_{j \ra \infty} \p_{\Omega_{j}}(E_{j}; B_{r}) = \pr(E;B_{r}) \, , \qquad \text{for almost all $r > 0 \, .$}
\end{equation}
\end{lemma}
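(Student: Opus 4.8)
The plan is to derive all three assertions from one device: turning a competitor for $E$ in $\Omega$ into a competitor for $E_{j}$ in the nearby \emph{but different} Lipschitz domain $\Omega_{j}$, by means of the reflection extension of Lemma~\ref{lem:boundS}. Throughout, work inside the cylinder $\cC_{r_{0},m}$ in which $\de\Omega$ is the graph of the $L$-Lipschitz function $\omega$; note $B_{r_{0}}\Subset\cC_{r_{0},m}$ since $r_{0}<m$, that $\omega_{j}\to\omega$ uniformly on $B'_{r_{0}}$ (this is the definition of $\Omega_{j}\to\Omega$ locally in Hausdorff distance), and consequently that $\ch_{\Omega_{j}}\to\ch_{\Omega}$ and $\ch_{E_{j}}\to\ch_{E}$ in $L^{1}_{\loc}(B_{r_{0}})$. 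For the almost-minimality of $E$, fix $0<r<r_{0}$ and a competitor $F\subset\Omega$ with $F\difsim E\Subset B_{\tau}$, $\tau<r$. Let $\tilde F$ be the extension of $F$ across $\de\Omega$ given by Lemma~\ref{lem:boundS}, so $\tilde F\cap\Omega=F$, $\ch_{\tilde F}\in BV_{\loc}$, and $|D\ch_{\tilde F}|(\de\Omega\cap\cC_{r_{0},m})=0$. The crucial remark is that, since $F\subset\Omega$ and $S_{\rho}(F)\setminus\Omega\subset\R^{n}\setminus\Omega$, one has $(\tilde F\cap\Omega_{j})\difsim F\subset\Omega_{j}\difsim\Omega$: intersecting with $\Omega_{j}$ kills everything except a slab adjacent to $\de\Omega$ whose measure vanishes. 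Passing to a subsequence, by Fubini $\Hau^{n-1}\big(\de B_{s}\cap((\tilde F\cap\Omega_{j})\difsim E_{j})\big)\to 0$ for a.e.\ $s\in(\tau,r)$; fix such an $r''\in(\tau,r)$ that is moreover a null radius of $|D\ch_{E}|$ and of every $|D\ch_{E_{j}}|$ (all but countably many $r''$ work). Define
\[
F_{j}:=\big(\tilde F\cap\Omega_{j}\cap B_{r''}\big)\cup\big(E_{j}\setminus\overline{B_{r''}}\big)\subset\Omega_{j},
\]
so that $F_{j}\difsim E_{j}\subset\overline{B_{r''}}\Subset B_{r}$, and insert $F_{j}$ into \eqref{eq:A-M} for $E_{j}$ at radius $r$.

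Since $F_{j}=E_{j}$ on $B_{r}\setminus\overline{B_{r''}}$ and $r''$ is null for $|D\ch_{E_{j}}|$, cancelling the common quantity $\p_{\Omega_{j}}(E_{j};B_{r}\setminus\overline{B_{r''}})$ on both sides leaves
\[
\p_{\Omega_{j}}(E_{j};B_{r''})\le |D\ch_{\tilde F}|(\Omega_{j}\cap B_{r''})+|D\ch_{F_{j}}|(\de B_{r''}\cap\Omega_{j})+|F_{j}\difsim E_{j}|^{\frac{n-1}{n}}\,\psi_{\Omega_{j}}(E_{j};0,r).
\]
Now let $j\to\infty$. On the left, lower semicontinuity with varying domains applies: any open $V\Subset\Omega\cap B_{r''}$ lies at positive height above the graph of $\omega$, hence above that of $\omega_{j}$ for $j$ large, so $V\subset\Omega_{j}$ eventually and $\liminf_{j}\p_{\Omega_{j}}(E_{j};B_{r''})\ge\liminf_{j}|D\ch_{E_{j}}|(V)\ge|D\ch_{E}|(V)$ by Theorem~\ref{thm:lscBV}; taking the supremum over such $V$ gives $\liminf_{j}\p_{\Omega_{j}}(E_{j};B_{r''})\ge\pr(E;B_{r''})$. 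On the right, $|D\ch_{\tilde F}|(\Omega_{j}\cap B_{r''})\to|D\ch_{\tilde F}|(\Omega\cap B_{r''})=\pr(F;B_{r''})$ because $\Omega_{j}\difsim\Omega$ shrinks into $\de\Omega$ while $|D\ch_{\tilde F}|$ does not charge $\de\Omega\cap\cC_{r_{0},m}$; the gluing term tends to $0$ by the choice of $r''$; $|F_{j}\difsim E_{j}|\to|F\difsim E|$; and $\psi_{\Omega_{j}}(E_{j};0,r)\le\sup_{j}\psi_{\Omega_{j}}(E_{j};0,r)$. Finally, adding back $\pr(E;B_{r}\setminus\overline{B_{r''}})=\pr(F;B_{r}\setminus\overline{B_{r''}})$ (they coincide there, as $F\difsim E\Subset B_{\tau}\subset B_{r''}$) turns this into \eqref{eq:A-M} for $E$ on $B_{r}$ with $\psi_{\Omega}(E;0,r)=\sup_{j}\psi_{\Omega_{j}}(E_{j};0,r)$, which is infinitesimal by hypothesis.

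The two limit statements reuse this machinery. For \eqref{LSCMinGap}, fix $\e>0$ and, by Lemma~\ref{lem:eqgaps}, a competitor $F\subset\Omega$ with $F\difsim E\Subset B_{r_{0}}$ and $\pr(F;B_{r_{0}})\le\inf\{\cdots\}+\e$; the same construction with $r=r_{0}$ produces $F_{j}$ with $F_{j}\difsim E_{j}\Subset B_{r_{0}}$ and $\limsup_{j}\p_{\Omega_{j}}(F_{j};B_{r_{0}})\le\pr(F;B_{r_{0}})$, whence $\liminf_{j}\Psi_{\Omega_{j}}(E_{j};B_{r_{0}})\ge\liminf_{j}\p_{\Omega_{j}}(E_{j};B_{r_{0}})-\limsup_{j}\p_{\Omega_{j}}(F_{j};B_{r_{0}})\ge\pr(E;B_{r_{0}})-\pr(F;B_{r_{0}})\ge\Psi_{\Omega}(E;B_{r_{0}})-\e$, and we let $\e\to0$. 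For \eqref{eq:perimcont}, assume $\Psi_{\Omega_{j}}(E_{j};B_{r_{0}})\to 0$, let $\tilde E$ be the reflection extension of $E$ itself, and for a.e.\ $\sigma\in(0,r_{0})$ test the gap of $E_{j}$ (in the form of Lemma~\ref{lem:eqgaps}) with the admissible competitor $G_{j}:=(\tilde E\cap\Omega_{j}\cap B_{\sigma})\cup(E_{j}\setminus\overline{B_{\sigma}})$; cancelling the exterior piece as above yields $\p_{\Omega_{j}}(E_{j};B_{\sigma})\le|D\ch_{\tilde E}|(\Omega_{j}\cap B_{\sigma})+|D\ch_{G_{j}}|(\de B_{\sigma}\cap\Omega_{j})+\Psi_{\Omega_{j}}(E_{j};B_{r_{0}})$, whose right-hand side tends to $\pr(E;B_{\sigma})$ along a suitable subsequence. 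Together with the lower-semicontinuity bound this forces $\p_{\Omega_{j}}(E_{j};B_{\sigma})\to\pr(E;B_{\sigma})$ for a.e.\ $\sigma$; as the limit is $\pr(E;B_{\sigma})$ independently of the subsequence, the full sequence converges.

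The main obstacle is precisely the mismatch of domains. A competitor $F\subset\Omega$ need not be contained in $\Omega_{j}$, nor may it be transported into $\Omega_{j}$ by the vertical graph map — that would distort perimeters by factors depending on $\Lip(\omega)$, which do not tend to $1$. The reflection $\tilde F$ of Lemma~\ref{lem:boundS} is exactly the tool that overcomes this: $\tilde F\cap\Omega_{j}$ is a genuine competitor agreeing with $F$ outside a vanishing slab near $\de\Omega$ and, thanks to $|D\ch_{\tilde F}|(\de\Omega\cap\cC_{r_{0},m})=0$, introduces no spurious boundary there, which is what makes $|D\ch_{\tilde F}|(\Omega_{j}\cap B)\to|D\ch_{\tilde F}|(\Omega\cap B)$ hold as $\Omega_{j}\to\Omega$. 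The remaining delicacy is the routine but careful choice of the gluing radius, which must simultaneously be a null radius for the perimeter measures of $E$ and of every $E_{j}$ and a good radius for the Fubini estimate controlling the gluing cost.
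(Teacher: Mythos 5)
Your overall strategy is the same as the paper's: extend the competitor $F$ for $E$ across $\de\Omega$ by the reflection of Lemma \ref{lem:boundS}, glue $\tilde F\cap\Omega_j$ to $E_j$ along a good sphere, control the gluing cost, and use lower semicontinuity of the perimeter on sets compactly contained in $\Omega$ (which eventually lie in $\Omega_j$); the paper only differs in bookkeeping, estimating the gluing cost via traces and a thin slab $U_\delta$ around $\de\Omega$ where you use Fubini on sliced symmetric differences. Your argument for the almost-minimality of $E$ is sound. However, the step you use for \eqref{LSCMinGap} fails: the claim $\limsup_j\p_{\Omega_j}(F_j;B_{r_0})\le\pr(F;B_{r_0})$ is unjustified and false in general. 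Since $F_j$ must coincide with $E_j$ on $B_{r_0}\setminus\overline{B_{r''}}$, one has $\p_{\Omega_j}(F_j;B_{r_0})\ge\p_{\Omega_j}(E_j;B_{r_0}\setminus\overline{B_{r''}})$, and for these outer perimeters only lower semicontinuity is available; convergence of perimeters of $E_j$ is essentially the content of \eqref{eq:perimcont}, which requires the extra hypothesis $\Psi_{\Omega_j}(E_j;B_{r_0})\to 0$, not assumed when proving \eqref{LSCMinGap}. The hypotheses only control $\sup_j\psi_{\Omega_j}(E_j;0,r)$ as $r\to 0$, so for radii comparable to $r_0$ the sets $E_j$ may carry excess perimeter in the outer annulus that does not disappear in the limit (in such examples \eqref{LSCMinGap} still holds, because the left-hand side is then large, but your chain of inequalities does not establish it).

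The repair is exactly the device you already employ in the first part (and which the paper uses): subtract $\p_{\Omega_j}(F_j;B_{r_0})$ from $\p_{\Omega_j}(E_j;B_{r_0})$ \emph{before} passing to the limit, so that the common outer contribution cancels, giving $\Psi_{\Omega_j}(E_j;B_{r_0})\ge\p_{\Omega_j}(E_j;B_{r''})-|D\ch_{\tilde F}|(\Omega_j\cap B_{r''})-\mathrm{(gluing)}_j$ and hence $\liminf_j\Psi_{\Omega_j}(E_j;B_{r_0})\ge\pr(E;B_{r''})-\pr(F;B_{r_0})$. But then you only recover $\pr(E;B_{r''})$, not $\pr(E;B_{r_0})$, so you must in addition choose the gluing radius $r''$ close to $r_0$ (admissible radii form a full-measure subset of $(\tau,r_0)$) so that $\pr(E;B_{r''})\ge\pr(E;B_{r_0})-\e$; this is precisely the paper's condition \eqref{eq:perrrzero}, and it is nowhere imposed in your construction. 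A secondary, fixable point: in your proof of \eqref{eq:perimcont} the full-measure set of good radii $\sigma$ produced by the Fubini argument depends on the extracted subsequence, so the assertion ``the limit is independent of the subsequence, hence the full sequence converges'' needs one more step (for instance, bound $\limsup_j\p_{\Omega_j}(E_j;B_\sigma)$ by $\pr(E;B_{\sigma'})$ for good radii $\sigma'>\sigma$ and let $\sigma'\downarrow\sigma$ over radii with $|D\ch_E|(\de B_\sigma\cap\Omega)=0$).
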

\begin{proof}
Let us fix $\e > 0$. Let $F \subset \Omega$ be such that $F \difsim E \subset \subset B_{r_{0}}$ and
\begin{equation}\label{eq:PsiFcontroE}
 \Psi_{\Omega}(E;B_{r_{0}}) \le \pr(E;B_{r_{0}}) - \pr(F;B_{r_{0}}) + \e\, .
\end{equation}
By Lemma \ref{lem:faetrace}, for all $j \geq 1$, for almost every $0 < r < r_{0}$ and $\Hau^{n-1}$-a.e. $x\in \de B_{r}$, we have 
\begin{equation}\label{eq:equaltraces}
\ch_{E_{j}}(x) = \Tr^{\pm}(E_{j},\de B_{r})(x)\quad \text{and} \quad  \ch_{E}(x) = \Tr^{\pm}(E,\de B_{r})(x)\,,
\end{equation}
where $\Tr^{\pm}(A,\de B_{r}) := \Tr^{\pm}(\ch_A, \de B_{r})$. By the $L^{1}_{\loc}$-convergence of $E_{j}$ to $E$, we can choose $r<r_{0}$ with the above property and the additional
\begin{equation}\label{eq:perrrzero}
\pr(E;B_{r}) \ge \pr(E;B_{r_{0}}) - \e\,,
\end{equation}
then take $j_{\e}$ large enough, such that 
\begin{align} \label{eq:difcompr}
&F \difsim E \subset \subset B_{r}\,,\\\label{eq:tracesmall}
&\int_{\de B_{r}} |\ch_{E_{j}} - \ch_{E}| \dd \Hau^{n-1} <\e\qquad \text{for $j\ge j_{\e}$} \, .
\end{align}
Let us fix $\delta>0$, define
\[
U_\delta := \big\{x=(x',x_{n})\in \cC_{r,m}:\ \omega(x')-\delta < x_{n}\le \omega(x')\big\}\,,
\]
and assume $\delta$ so small that 
\begin{equation}\label{eq:Haudeltaeps}
\Hau^{n-1}(\de B_{r}\cap U_{\delta})<\e
\end{equation}
and
\begin{equation}\label{eq:peromegadelta}
\pr(E;B_{r}) \le \p(E;B_{r}\cap(\Omega + \delta e_{n})) + \e\,.
\end{equation}
Owing to the uniform convergence of $\omega_{j}$ to $\omega$, we can select $j_\delta \geq 1$ such that $\|\omega_{j}-\omega\|_{\infty} <\delta$ for all $j\ge j_{\delta}$, then for those $j$ we define
\begin{equation*}
A_j := \Omega_{j} \cap \Omega \cap B_r \, , \qquad B_j := (\Omega_{j} \setminus \Omega) \cap B_r \,,
\end{equation*}
and observe that $B_j \subset U_\delta$. Now we set
\begin{equation} \label{eq:Fj}
F_{j} := (\tilde{F} \cap \Omega_{j} \cap B_r) \cup \big(E_{j} \cap (B_{r_{0}} \setminus B_{r})\big) \,,
\end{equation}
where $\tilde{F}=F\cup (S(F)\setminus \Omega)$ and $S$ is the symmetry through $\de \Omega$ defined in \eqref{eq:Ssymmetry} (with $\rho = r_{0}$). Thanks to \eqref{eq:difcompr}, we have $F_{j}\subset \Omega_{j}$ and $F_{j} \difsim E_{j} \subset \subset B_{r_{0}}$, which means that $F_{j}$ is a competitor for $E_{j}$ in the definition of $\Psi_{\Omega_{j}}(E_{j},B_{r_{0}})$. Moreover, by \eqref{eq:Fj}, \eqref{eq:tracesmall}, and \eqref{eq:Haudeltaeps}, we have
\begin{align} \nonumber
\p_{\Omega_{j}}(F_{j};B_{r_{0}}) & \le \p_{\Omega_{j}}(F_{j};B_{r}) + \p_{\Omega_{j}}(E_{j};B_{r_{0}} \setminus \overline{B_{r}}) \\
& \qquad + \int_{\de B_{r} \cap \Omega_{j}\cap \Omega} |\ch_{E_{j}} - \ch_{E}| d \Hau^{n-1} + \Hau^{n-1}(\de B_{r}\cap U_{\delta}) \nonumber \\
\label{eq:pF_j}
&\le \p_{\Omega_{j}}(F_{j};B_{r}) + \p_{\Omega_{j}}(E_{j};B_{r_{0}} \setminus \overline{B_{r}}) + 2\e\,.
\end{align}
Let us compute $\p(F_{j};B_{r} \cap \Omega_{j})$. By Lemma \ref{lem:boundS}, $\p(\tilde F;\de \Omega\cap \cC_{r_{0},m})=0$, hence
	    \begin{align} \label{eq:pF_jr}
	    	\p_{\Omega_{j}}(F_j;B_r) & = \p(F;A_j) + \p(S(F);B_j)\, .
	    \end{align}
        Again by Lemma \ref{lem:boundS}, up to possibly taking a smaller $\delta$, we have
\begin{equation}\label{eq:stimaFtildeBj}
\p(S(F);B_j) = \p(\tilde{F};B_{j}) \le \p(\tilde{F};U_\delta) \leq \e\,.
\end{equation}
Putting together \eqref{eq:pF_j}, \eqref{eq:pF_jr}, and \eqref{eq:stimaFtildeBj}, and taking into account \eqref{eq:equaltraces}, we obtain
\begin{align} \label{eq:EstMinGap1}
\Psi_{\Omega_{j}}(E_{j};B_{r_{0}}) & \geq \p_{\Omega_{j}}(E_{j};B_{r_{0}}) - \p_{\Omega_{j}}(F_{j};B_{r_{0}}) \\
			& \ge \p_{\Omega_{j}}(E_{j};B_{r}) - \p(F; A_{j}) -3\e\nonumber \\
			& \geq \p_{\Omega_{j}}(E_{j};B_{r}) - \pr(F;B_{r_{0}}) -3\e\, . \nonumber
		\end{align}
Now, since for all $j\ge j_{\e}$ we have $\|\omega_{j}-\omega\|_{\infty}<\delta$, we infer that
\[
B_{r}\cap (\Omega+\delta e_{n}) \subset B_{r}\cap \Omega_{j}\,.
\]
This inclusion combined with \eqref{eq:EstMinGap1} and \eqref{eq:peromegadelta} gives
\begin{align} \label{eq:EstMinGap2}
\Psi_{\Omega_{j}}(E_{j};B_{r_{0}}) & \ge \p(E_{j};B_{r} \cap (\Omega+\delta e_{n})) - \p(F;B_{r_{0}} \cap \Omega) -3\e\,,
\end{align}
so that taking the liminf as $j\to\infty$ in \eqref{eq:EstMinGap2}, and using the lower semicontinuity of the perimeter, \eqref{eq:peromegadelta}, \eqref{eq:perrrzero}, and \eqref{eq:PsiFcontroE}, we find
\begin{align*}
\liminf_{j} \Psi_{\Omega_{j}}(E_{j};B_{r_{0}}) &\ge \liminf_{j} \p(E_{j};B_{r} \cap (\Omega+\delta e_{n})) - \pr(F;B_{r_{0}}) -3\e\\
&\ge \p(E;B_{r} \cap (\Omega+\delta e_{n})) - \pr(F;B_{r_{0}}) -3\e\\
&\ge \pr(E;B_{r}) - \pr(F;B_{r_{0}}) -4\e\\
&\ge \pr(E;B_{r_{0}}) - \pr(F;B_{r_{0}}) -5\e\\
&\ge \Psi_{\Omega}(E;B_{r_{0}}) - 6\e\,.
\end{align*}
Then, the arbitrary choice of $\e$ implies \eqref{LSCMinGap}. Now, the fact that $E$ satisfies \eqref{eq:A-M} with $\psi_{\Omega}(E;0,r)$ as in the statement can be proved with the same argument used to show \eqref{LSCMinGap}, also taking into account that $|E_{j}\difsim F_{j}|\to |E\difsim F|$ as $j\to\infty$. Finally, to prove \eqref{eq:perimcont} we consider the sequence $F_{j}$ defined as before, but now with $F=E$. Choosing $\e>0$ arbitrarily, for $j$ large enough we obtain as before
\begin{align*}
\Psi_{\Omega_{j}}(E_{j};B_{r_{0}}) &\ge \p_{\Omega_{j}}(E_{j};B_{r_{0}}) - \p_{\Omega_{j}}(F_{j};B_{r_{0}})\\
&\ge \p_{\Omega_{j}}(E_{j};B_{r}) - \pr(E;B_{r}) - 3\e\,,
\end{align*}
which gives the desired conclusion.
\end{proof}

\begin{lemma} \label{lem:upbmingap}
Let $\Omega, A \subset \R^n$ be open sets, with $\de\Omega$ Lipschitz and $A$ bounded, of class $C^{2}$, and such that $\Hau^{n-1}(\de A\cap \de \Omega) = 0$. Let $f,g\in BV_{\loc}(\Omega)$, then
\begin{align} \label{eq:finalpsi}
|\Psi_\Omega(f,A) - \Psi_\Omega(g,A)| & \leq \Big||Df|(\Omega \cap A) - |Dg|(\Omega \cap A)\Big| \\\nonumber
&\qquad + \|\Tr^+(f_{0}, \de A) - \Tr^+(g_{0},\de A)\|_{L^1(\de A)} \,,
\end{align}
where $f_{0}, g_{0}$ denote the zero-extensions of $f$ and $g$ respectively on $A\setminus \Omega$.
\end{lemma}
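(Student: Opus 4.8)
The plan is to exploit the fact that the minimality gap is $|D\cdot|(\Omega\cap A)$ minus a competitor infimum. Set $m(u):=\inf\{|Dw|(\Omega\cap A):\ w\in BV_{\loc}(\Omega),\ \spt(w-u)\Subset A\}$, so that by Definition~\ref{def:mingap} we have $\Psi_\Omega(u;A)=|Du|(\Omega\cap A)-m(u)$, all terms being finite since $0\le m(u)\le|Du|(\Omega\cap A)<\infty$. Then
\[
|\Psi_\Omega(f;A)-\Psi_\Omega(g;A)|\le\big||Df|(\Omega\cap A)-|Dg|(\Omega\cap A)\big|+|m(f)-m(g)|\,,
\]
so the whole content of the lemma is the estimate $|m(f)-m(g)|\le T:=\|\Tr^{+}(f_0,\de A)-\Tr^{+}(g_0,\de A)\|_{L^1(\de A)}$, and by symmetry it suffices to prove $m(f)\le m(g)+T$.

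Fix $\e>0$ and a competitor $h\in BV_{\loc}(\Omega)$ for $g$, i.e.\ $\spt(h-g)\Subset A$ and $|Dh|(\Omega\cap A)\le m(g)+\e$. Let $d$ be the signed distance to $\de A$, positive inside $A$; since $A$ is bounded and of class $C^2$, $d$ is of class $C^2$ with $|\nabla d|\equiv1$ on a bounded tubular neighbourhood $A''$ of the compact hypersurface $\de A$, and for $0<t$ small the set $A_t:=\{d>t\}$ is open, bounded, with $C^2$ boundary $\{d=t\}$, and $\overline{A_t}\Subset A$. Pick $t_0>0$ with $\spt(h-g)\Subset A_{t_0}$, so that $h=g$ in a neighbourhood of $\{d=t\}$ for every $0<t<t_0$. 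For such $t$ I would take as competitor for $f$ the glued function $\tilde h_t:=h\,\ch_{A_t}+f\,\ch_{\Omega\setminus A_t}$ on $\Omega$, which lies in $BV_{\loc}(\Omega)$ (gluing of $BV$ functions across the $C^2$ hypersurface $\{d=t\}$; the $BV$-ness even for unbounded $f,g$ can be checked via the coarea criterion of Theorem~\ref{thm:coarea}) and satisfies $\spt(\tilde h_t-f)\subset\overline{A_t}\Subset A$, so that $m(f)\le|D\tilde h_t|(\Omega\cap A)$. Splitting $\Omega\cap A$ into the disjoint pieces $\Omega\cap A_t$, $\Omega\cap\{d=t\}$, $\Omega\cap(A\setminus\overline{A_t})$ and using the trace/gluing formula for $BV$ functions,
\[
|D\tilde h_t|(\Omega\cap A)=|Dh|(\Omega\cap A_t)+|Df|(\Omega\cap(A\setminus\overline{A_t}))+\int_{\{d=t\}\cap\Omega}\big|\Tr^{+}(h,\de A_t)-\Tr^{-}(f,\de A_t)\big|\,d\Hau^{n-1}\,.
\]
Since $h=g$ near $\{d=t\}$ we have $\Tr^{+}(h,\de A_t)=\Tr^{+}(g,\de A_t)$ there, and by Lemma~\ref{lem:faetrace} applied to $d$ on $A''\cap\Omega$, for a.e.\ $t\in(0,t_0)$ the one-sided traces of $f$ and $g$ on $\{d=t\}$ coincide $\Hau^{n-1}$-a.e.\ with the pointwise values (and $|Df|$, $|Dh|$ do not charge $\{d=t\}$); for such $t$ the interface integral is $\int_{\{d=t\}\cap\Omega}|f-g|\,d\Hau^{n-1}$, hence $m(f)\le m(g)+\e+|Df|(\Omega\cap(A\setminus\overline{A_t}))+\int_{\{d=t\}\cap\Omega}|f-g|\,d\Hau^{n-1}$.

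To conclude I would not let $t\to0$ directly, but average this inequality over $t\in(0,\tau)$ and divide by $\tau$. The term $\tau^{-1}\int_0^\tau|Df|(\Omega\cap(A\setminus\overline{A_t}))\,dt$ is bounded by $|Df|(\Omega\cap(A\setminus\overline{A_\tau}))\to0$ as $\tau\to0^{+}$; and by the coarea formula (Theorem~\ref{thm:coareaLip} with $\phi=d$, $|\nabla d|=1$ a.e.) the averaged interface term equals $\tau^{-1}\int_{\{0<d<\tau\}\cap\Omega}|f-g|\,dx$, and
\[
\tau^{-1}\int_{\{0<d<\tau\}\cap\Omega}|f-g|\,dx\ \longrightarrow\ \int_{\de A\cap\Omega}\big|\Tr^{+}(f,\de A)-\Tr^{+}(g,\de A)\big|\,d\Hau^{n-1}\qquad(\tau\to0^{+})
\]
by the standard ``shrinking collar'' characterization of the $BV$-trace (see e.g.\ \cite{AmbrosioFuscoPallara2000,Giu84book}). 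This last integral equals $T$ because $\Tr^{+}(f_0,\de A)$ agrees with $\Tr^{+}(f,\de A)$ on $\de A\cap\Omega$ and vanishes on $\de A\setminus\overline\Omega$ (likewise for $g$), while $\Hau^{n-1}(\de A\cap\de\Omega)=0$. Letting $\tau\to0^{+}$ and then $\e\to0^{+}$ gives $m(f)\le m(g)+T$, and the lemma follows.

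The main obstacle is precisely this passage to the limit in the interface term: the slice integrals over a single surface $\{d=t\}$ need not converge to the trace integral over $\de A$, which is why it pays to average in $t$ and invoke coarea, thereby reducing the matter to the known collar-average identity for the trace. The $C^2$ regularity of $A$ is used exactly here, to ensure that $\{d=t\}$ are genuine $C^2$ hypersurfaces foliating a neighbourhood of $\de A$, and the hypothesis $\Hau^{n-1}(\de A\cap\de\Omega)=0$ both identifies $T$ and keeps the collar limit clean near $\de A\cap\de\Omega$; the remaining points (validity of the gluing, measurability/$BV$ of $\tilde h_t$, selecting $t$ in the full-measure ``good'' set) are routine.
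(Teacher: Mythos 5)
Your proposal is correct and follows the same overall strategy as the paper's proof: reduce the estimate to comparing the competitor infima, take a near-optimal competitor for one of the two functions, glue it with the other function across an inner parallel hypersurface of $\de A$, and control the resulting interface term by the $L^1$-distance on $\de A$ of the traces of the zero extensions (that you optimize for $g$ and build the competitor for $f$, rather than the reverse as in the paper, is immaterial). Where you genuinely diverge is in the crux, namely passing from the interface integral on the parallel surface to the trace integral on $\de A$: the paper proves this convergence by hand, via smooth approximations with prescribed traces, the area formula along the normal map $\zeta_t$ (tangential Jacobian $1+O(t)$), a Cauchy-sequence identification of the limit with $\Tr^{+}(f_0-g_0,\de A)$, and a coarea-average selection of good radii $t_k$; you instead identify the interface integrand with $|f-g|$ on a.e.\ slice through Lemma \ref{lem:faetrace}, average in $t$, apply the coarea formula with $\phi=d$, and invoke the standard collar-average recovery of the $BV$ trace from \cite{AmbrosioFuscoPallara2000,Giu84book}. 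That collar fact is a genuine standard result (it is essentially what the paper re-derives), so your route is legitimate and arguably cleaner, at the price of outsourcing exactly the technical content the paper makes self-contained; note also that both arguments implicitly assume $f_0,g_0$ are $BV$ near $\de A$ so that their traces on $\de A$ are defined, and that the hypothesis $\Hau^{n-1}(\de A\cap\de\Omega)=0$ enters only in identifying the collar limit with $\|\Tr^{+}(f_0,\de A)-\Tr^{+}(g_0,\de A)\|_{L^1(\de A)}$, as you correctly observe.
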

\begin{proof}
Given $\e>0$, there exists $h\in BV_{\loc}(\Omega)$ with $\spt(h-f)\Subset A$, such that
\begin{align}\label{eq:stimaPsi1}
\Psi_{\Omega}(f,A) &\le |Df|(A\cap \Omega) - |Dh|(A\cap \Omega) + \e\\\nonumber
&\le \big||Df|(A\cap \Omega) - |Dg|(A\cap \Omega)\big| + \Psi_{\Omega}(g,A) + |D\tilde{h}|(A\cap \Omega) - |Dh|(A\cap \Omega) + \e\,,
\end{align}
where $\tilde{h}\in BV_{\loc}(\Omega)$ will be suitably chosen, so that in particular $\spt(\tilde{h}-g)\Subset A$. For the definition of $\tilde{h}$, we claim that it is possible to construct a sequence $A^{(k)}$ of inner parallel sets of $A$ that converge to $A$, for which $|Df_{0}|(\de A^{(k)}) = |Dg_{0}|(\de A^{(k)}) = 0$ and, moreover, 
\begin{equation}\label{eq:limtrace}
\lim_{k} \int_{\de A^{(k)}} |\Tr^{+}(f_{0}-g_{0},\de A^{(k)})|\, d\Hau^{n-1} = \int_{\de A} |\Tr^{+}(f_{0}-g_{0},\de A)|\, d\Hau^{n-1}\,.
\end{equation}
For the proof of \eqref{eq:limtrace} we argue as follows. Since $A$ is of class $C^{2}$, there exists $\delta>0$ such that, for all $0<t<\delta$, the map $\zeta_{t}(x) = x+t\nu_{A}(x)$ is a diffeomorphism of class $C^{1}$ between $\de A$ and the boundary $\de A_{t}$ of the inner parallel set $A_{t} = \{x\in A:\ \dist(x,\de A)>t\}$. 
Now, we consider two sequences $f_{0,j}, g_{0,j}$ of smooth approximations of $f_{0},g_{0}$ on $A$, with traces $\Tr^{+}(f_{0,j},\de A) = \Tr^{+}(f_{0},\de A)$ and $\Tr^{+}(g_{0,j},\de A) = \Tr^{+}(g_{0},\de A)$, respectively (see Remark \ref{rem:contrace}). By inspecting the proof of Anzellotti-Giaquinta's approximation theorem, it is not restrictive to ask that the sequences $f_{0,j}, g_{0,j}$ also satisfy
\begin{align}\label{eq:precoarea1}
\int_{A\setminus A_{1/k}} (|f_{0,j} - f_{0}|+|g_{0,j} - g_{0}|)\, dx \le \frac{1}{k^{2}}\,,
\end{align}
for all $j$ and for $k> \delta^{-1}$.
We note that the tangential Jacobian of $\zeta_{t}$ satisfies $J\zeta_{t}(x) = 1 + O(t)$, hence the area formula gives
\begin{equation}\label{eq:tracenorms}
\int_{\de A_{t}} |f_{0,j}(y) - g_{0,j}(y)|\, d\Hau^{n-1}(y) = (1+ O(t))\int_{\de A}|f_{0,j}(\zeta_{t}(x)) - g_{0,j}(\zeta_{t}(x))|\, d\Hau^{n-1}(x)\,.
\end{equation}
As $t\to 0^{+}$ we have $\zeta_{t}(x) \to x$ uniformly. Therefore, by following the same Cauchy-sequence argument as in the classical construction of the trace (see, e.g., \cite{EvansGariepy2015}), the compositions $f_{0,j}(\zeta_{t}(x))$ and $g_{0,j}(\zeta_{t}(x))$ can be shown to converge in $L^{1}(\de A)$ to some limits $\hat{f}_{0,j}$ and $\hat{g}_{0,j}$, respectively. Hence \eqref{eq:tracenorms} implies
\begin{equation}\label{eq:tracenorms2}
\lim_{t\to 0^{+}} \int_{\de A_{t}} |f_{0,j}(y) - g_{0,j}(y)|\, d\Hau^{n-1}(y) = 
\int_{\de A}|\hat{f}_{0,j}(x) - \hat{g}_{0,j}(x)|\, d\Hau^{n-1}(x)\,.
\end{equation}
At the same time, if we choose a vector field $\xi\in C^{1}(\R^{n};\R^{n})$ and set either $u_{j}=f_{0,j}$ or $u_{j}=g_{0,j}$, by Gauss-Green Theorem we obtain 
\begin{align*}
\int_{A_{t}} (u_{j}\div \xi + \nabla u_{j}\cdot \xi)\, dx &= -\int_{\de A_{t}} u_{j}\,\xi\cdot \nu_{A_{t}}\, d\Hau^{n-1}\\
&= -(1+O(t))\int_{\de A} u_{j}(\zeta_{t}(x))\,\xi(\zeta_{t}(x))\cdot \nu_{A}(x)\, d\Hau^{n-1}(x)\,, 
\end{align*}
hence taking the limit as $t\to 0^{+}$ gives
\begin{align*}
\int_{A} (u_{j}\div \xi + \nabla u_{j}\cdot \xi)\, dx &= -\int_{\de A} \hat{u}_{j}\,\xi\cdot \nu_{A}\, d\Hau^{n-1}\,, 
\end{align*}
which means that $\hat{f}_{0,j}$ and $\hat{g}_{0,j}$ coincide, respectively, with $\Tr^{+}(f_{0},\de A)$ and $\Tr^{+}(g_{0},\de A)$ up to $\Hau^{n-1}$-null sets and for all $j$, by the uniqueness of the trace. We can thus rewrite \eqref{eq:tracenorms2} as
\begin{equation}\label{eq:tracenorms3}
\lim_{t\to 0^{+}} \int_{\de A_{t}} |f_{0,j}(y) - g_{0,j}(y)|\, d\Hau^{n-1}(y) = 
\int_{\de A}|\Tr^{+}(f_{0}-g_{0},\de A)(x)|\, d\Hau^{n-1}(x)\,.
\end{equation}
To get \eqref{eq:limtrace} from \eqref{eq:tracenorms3}, we must choose $A^{(k)}$ appropriately. To this aim, we apply the coarea formula to the integral in \eqref{eq:precoarea1} and average the resulting inequality, deducing the existence of $0<t_{k}<1/k$ such that for all $j$
\begin{equation}\label{eq:coarea12}
\int_{\de A^{(k)}} (|f_{0,j} - \Tr(f_{0},\de A^{(k)})| + |g_{0,j} - \Tr(g_{0},\de A^{(k)})|) \, d\Hau^{n-1} \le \frac{1}{k}\,,
\end{equation}
where we have set $A^{(k)} = A_{t_{k}}$. By the triangle inequality and \eqref{eq:coarea12} we obtain
\begin{align*}
\int_{\de A^{(k)}} |\Tr^{+}(f_{0}-g_{0},\de A^{(k)})|\, d\Hau^{n-1} &\le \int_{\de A^{(k)}} |f_{0,j} - g_{0,j}|\, d\Hau^{n-1} + \frac{1}{k}\,,
\end{align*}
which gives \eqref{eq:limtrace} at once from \eqref{eq:tracenorms3}.

Now we observe that $|Df_{0}|(\de A^{(k)}) = |Dg_{0}|(\de A^{(k)}) = 0$ because the inner and outer traces of $f_{0}$ and $g_{0}$ on $\de A^{(k)}$ coincide, hence we can define 
\[
\tilde{h} = h\ch_{A^{(k)}} + g\ch_{A\setminus A^{(k)}}\,.
\] 
Note that $\spt(\tilde{h}-g)\Subset A$ and, if $k$ is large enough, $\spt(h-f)\Subset A^{(k)}$, so that
\begin{align*}
|D\tilde{h}|(A\cap \Omega) &\le |Dh|(A^{(k)}\cap\Omega) + |Dg_{0}|(A\setminus \overline{A^{(k)}}) + \int_{\de A^{(k)}} |\Tr(f_{0}-g_{0},\de A^{(k)})|\, d\Hau^{n-1}\,.
\end{align*}
By choosing $k$ large enough we obtain $|Dg_{0}|(A\setminus A^{(k)}) <\e$ and thus
\begin{equation}\label{eq:stimaDelta1}
|D\tilde{h}|(A\cap \Omega) - |Dh|(A\cap\Omega) \le \int_{\de A} |\Tr^{+}(f_{0}-g_{0},\de A)|\, d\Hau^{n-1} + \e.
\end{equation}
By combining \eqref{eq:stimaPsi1} and \eqref{eq:stimaDelta1}, we get
\begin{align*}
\Psi_{\Omega}(f,A) &\le \big||Df|(A\cap \Omega) - |Dg|(A\cap \Omega)\big| + \Psi_{\Omega}(g,A) + \int_{\de A} |\Tr^{+}(f_{0}-g_{0},\de A)|\, d\Hau^{n-1} + 2\e\,.
\end{align*}
Since $\e$ is arbitrary, we conclude
\begin{align*}
\Psi_{\Omega}(f,A) - \Psi_{\Omega}(g,A) &\le \big||Df|(A\cap \Omega) - |Dg|(A\cap \Omega)\big| + \int_{\de A} |\Tr^{+}(f_{0}-g_{0},\de A)|\, d\Hau^{n-1}
\end{align*}
and, by exchanging the role of $f$ and $g$ in the argument above, we obtain \eqref{eq:finalpsi}.
\end{proof}

\subsection{Density estimates} In this subsection we establish perimeter and volume density estimates for almost-minimizers at a point either in $\Omega$ or on $\de \Omega$.
\begin{lemma}\label{lem:density}
Let $\Omega\subset \R^{n}$ be an open set with a Lipschitz boundary. Let $E$ be an almost-minimizer in $\Omega$, and let $x\in \overline{\Omega}$. Assume that $\pr(E;B_{r}(x)) > 0$ for all $r > 0$. Then, there exist constants $C \ge 1,\ r_{0} > 0$, depending on $\Omega$, $E$ and $x$, such that
		\begin{align}
			\label{eq:densityper}
			&C^{-1}r^{n-1} \le \pr(E;B_{r}(x)) \le Cr^{n-1}\\
			\label{eq:densityvol}
			&\min\big(|E\cap B_{r}(x)\cap \Omega|,\ |(B_{r}(x)\cap \Omega)\setminus E|\big) \ge C^{-1}r^{n}\,,
		\end{align}
		for all $0 < r < r_{0}$.
\end{lemma}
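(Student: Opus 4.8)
The plan is to prove the perimeter and volume density estimates by the classical comparison-with-the-ball / density-gap argument from interior regularity theory (see e.g.\ \cite{maggi2012sets}), adapted to the relative perimeter in the possibly-Lipschitz domain $\Omega$. The only structural difficulty at a boundary point $x\in\de\Omega$ is that $B_r(x)\cap\Omega$ is not a ball, so isoperimetric-type inequalities must be replaced by the \emph{relative isoperimetric inequality in Lipschitz domains}: there is a constant $c_\Omega>0$ and $r_0>0$ such that for all $0<r<r_0$ and all measurable $G\subset B_r(x)\cap\Omega$ one has $\min(|G|,|(B_r(x)\cap\Omega)\setminus G|)^{(n-1)/n}\le c_\Omega\,\p_\Omega(G;B_r(x))$. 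This holds because $\Omega$ is locally bi-Lipschitz to a half-space. With this tool in hand the argument is essentially the standard one.

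\medskip

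\emph{Upper perimeter bound.} First I would prove $\pr(E;B_r(x))\le C r^{n-1}$ for small $r$. For a.e.\ $r<r_x$, compare $E$ with $F=E\setminus B_r(x)$ (which is admissible since $F\difsim E=E\cap B_r(x)\Subset B_s(x)$ for $s$ slightly bigger than $r$ and, by passing to a.e.\ $r$, $\p(E;\de B_r(x)\cap\Omega)=0$). The almost-minimality inequality \eqref{eq:A-M} gives
\[
\pr(E;B_r(x))\le \pr(E\setminus B_r(x);B_s(x)) + |E\cap B_r(x)|^{\frac{n-1}{n}}\psi_\Omega(E;x,s).
\]
On the right, $\pr(E\setminus B_r(x);B_s(x))\le \Hau^{n-1}(\de B_r(x)\cap\Omega)\le n\omega_n r^{n-1}$, while $|E\cap B_r(x)|\le \omega_n r^n$ and $\psi_\Omega$ is bounded near $0$; letting $s\downarrow r$ yields the bound. (One must track that the perimeter of $E$ restricted to $\de B_r(x)\cap\Omega$ is controlled; this is standard via the coarea formula applied to $r\mapsto|E\cap B_r(x)|$, exactly as in the interior case.)

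\medskip

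\emph{Volume lower bound.} Set $u(r):=|E\cap B_r(x)\cap\Omega|$, so $u'(r)=\Hau^{n-1}(E^{(1)}\cap\de B_r(x)\cap\Omega)$ for a.e.\ $r$. Comparing $E$ with $F=E\setminus B_r(x)$ as above, almost-minimality gives, for a.e.\ small $r$,
\[
\p_\Omega(E;B_r(x))\le \Hau^{n-1}(E^{(1)}\cap\de B_r(x)\cap\Omega)+u(r)^{\frac{n-1}{n}}\psi_\Omega(E;x,r)= u'(r)+u(r)^{\frac{n-1}{n}}\psi_\Omega(E;x,r).
\]
By the relative isoperimetric inequality (assuming $u(r)\le\tfrac12|B_r(x)\cap\Omega|$, which holds for small $r$ unless $x$ is a density-$1$ point of $E$, a case treated symmetrically by working with $\Omega\setminus E$), $\p_\Omega(E;B_r(x))\ge c_\Omega^{-1}\p_\Omega(E\cap B_r(x);B_r(x))\ge c_\Omega^{-1}u(r)^{(n-1)/n}$ after also absorbing the $\de B_r(x)$ contribution; combining and using that $\psi_\Omega(E;x,r)\to0$ (so it can be absorbed into the constant for $r$ small), we get a differential inequality of the form $u(r)^{(n-1)/n}\lesssim u'(r)$, i.e.\ $(u^{1/n})'\gtrsim\text{const}>0$. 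Since $u(r)>0$ for all $r>0$ (as $\p_\Omega(E;B_r(x))>0$ forces $E$ and its complement to both have positive measure near $x$), integrating from $0$ gives $u(r)\ge C^{-1}r^n$. The symmetric argument applied to $\Omega\setminus E$ gives the bound on $|(B_r(x)\cap\Omega)\setminus E|$, completing \eqref{eq:densityvol}.

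\medskip

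\emph{Lower perimeter bound.} Finally, feed \eqref{eq:densityvol} back into the relative isoperimetric inequality: for small $r$ both $|E\cap B_r(x)\cap\Omega|$ and $|(B_r(x)\cap\Omega)\setminus E|$ are $\ge C^{-1}r^n$, so $\min(|E\cap B_r(x)\cap\Omega|,|(B_r(x)\cap\Omega)\setminus E|)\ge C^{-1}r^n$, whence $\p_\Omega(E;B_r(x))\ge c_\Omega^{-1}(C^{-1}r^n)^{(n-1)/n}=C'^{-1}r^{n-1}$, giving the left inequality in \eqref{eq:densityper}. The main obstacle, such as it is, is purely in setting up the right substitute for the isoperimetric inequality at a Lipschitz boundary point and verifying it uniformly in $r$; once that is granted, the rest is the textbook De Giorgi iteration/ODE argument with the minimality error absorbed thanks to $\psi_\Omega(E;x,r)\to0$.
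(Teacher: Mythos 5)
Your overall strategy is the same as the paper's: cut-and-paste competitors $E\setminus B_r$ (or $E\cup(B_r\cap\Omega)$) in the almost-minimality inequality \eqref{eq:A-M}, a relative isoperimetric inequality adapted to the Lipschitz domain, a differential inequality for the volume, and then the lower perimeter bound by feeding \eqref{eq:densityvol} back into the isoperimetric inequality, with the upper perimeter bound coming from the spherical cap. However, there is a genuine gap in the volume step: the dichotomy ``$u(r)\le\tfrac12|B_r(x)\cap\Omega|$ for all small $r$, unless $x$ is a density-one point of $E$, treated symmetrically'' is not available a priori. The ratio $|E\cap B_r(x)\cap\Omega|/|B_r(x)\cap\Omega|$ need not admit a limit at a boundary point and may cross $\tfrac12$ along infinitely many scales, in which case neither branch of your case distinction holds on a full interval $(0,r)$; but your differential inequality $u'\gtrsim u^{(n-1)/n}$ was derived only at radii where $E\cap B_r(x)\cap\Omega$ is the smaller portion, and ``integrating from $0$'' requires it for a.e.\ radius in $(0,r)$. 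The paper avoids this by choosing the competitor radius by radius ($F_t=E\cup(B_t\cap\Omega)$ when $m(t)>\mu(t)$, $F_t=E\setminus B_t$ otherwise), which produces a single differential inequality for $s(r)=\min\{m(r),\mu(r)\}$, integrating directly to \eqref{eq:densityvol}. Your version can be repaired (e.g.\ by a stopping-radius argument: at scales where the fraction exceeds $\tfrac12$ the bound is trivial since $|B_\rho\cap\Omega|\gtrsim\rho^n$, and one integrates the ODE from the last such scale), but as written the case analysis is incomplete and the integration from $0$ is not justified.

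Two smaller points. First, your uniform-in-$r$ relative isoperimetric inequality for $B_r(x)\cap\Omega$ is true (the Lipschitz constant is scale invariant, so one can rescale to unit radius), but you assert it without proof; the paper sidesteps the issue by applying the relative isoperimetric inequality once, in a fixed Lipschitz cylinder $\Omega_{L,r_0}\supset B_r$ for all $r<r_0$, to the set $E\cap B_r$, which needs only one constant and directly yields the bound in terms of $\min\{m(r),\mu(r)\}$. Second, the displayed estimate $\pr(E\setminus B_r(x);B_s(x))\le\Hau^{n-1}(\de B_r(x)\cap\Omega)$ is false as stated, since it omits the term $\pr(E;B_s(x)\setminus\overline{B_r(x)})$; this is harmless because that term is common to both sides of \eqref{eq:A-M} and cancels (equivalently, it vanishes as $s\downarrow r$, as you indicate), and with that correction your upper bound $\pr(E;B_r(x))\le Cr^{n-1}$ is exactly how the paper obtains it.
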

\begin{proof}
		Up to a translation, we assume that $x=0$. We start proving \eqref{eq:densityvol}. Given $r>0$ we set
		\[
		m(r) := |B_{r} \cap \Omega\cap E| \, , \quad \mu(r) := |B_{r} \cap \Omega \setminus E| \,.
		\]
		Both $m$ and $\mu$ are non-decreasing, thus differentiable for almost all $r > 0$. By \cite[Example 13.3]{maggi2012sets}, for almost all $r > 0$, we have
		\[
		m'(r) = \Hau^{n-1}(E \cap \de B_{r} \cap \Omega) \, , \quad \mu'(r) = \Hau^{n-1}(\de B_{r} \cap \Omega \setminus E) \, .
		\]
Now, up to an isometry, and for $r_{0}$ sufficiently small and $L\ge 1$ sufficiently large, if we set $Q_{L,r_{0}} = B'_{r_{0}}\times (-Lr_{0},Lr_{0})$ and $\Omega_{L,r_{0}} := \Omega\cap Q_{L,r_{0}}$, we have that $\Omega_{L,r_{0}}$ is connected and has a Lipschitz boundary, hence it supports a relative isoperimetric inequality of the form
\begin{equation} \label{eq:relisop0}
\p(F;\Omega_{L,r_{0}}) \geq C_{0} \min \{|F\cap \Omega_{L,r_{0}}|, |\Omega_{L,r_{0}}\setminus F| \}^{\frac{n-1}{n}} \,.
\end{equation}
From \eqref{eq:relisop0} and the fact that $B_{r}\Subset Q_{L,r_{0}}$ for all $0<r<r_{0}$, we infer that 
\begin{equation} \label{eq:relisop}
m'(r) + \pr(E;B_{r}) = \p(E\cap B_{r};\Omega_{L,r_{0}}) \geq C_{0} \min \{ m(r), \mu(r) \}^{\frac{n-1}{n}} \,.
\end{equation}
and
\begin{equation} \label{eq:relisop-bis}
\mu'(r) + \pr(E;B_{r}) = \p(B_{r}\setminus E;\Omega_{L,r_{0}}) \geq C_{0} \min \{ m(r), \mu(r) \}^{\frac{n-1}{n}} 
\end{equation}
for almost all $0<r<r_{0}$. Set $0 < t < r$ and define the competitor 
		\[
		F_{t} =
		\begin{cases}
			E \cup B_{t} \cap \Omega & \text{if } m(t) > \mu(t),\\ 
			E \setminus B_{t} \cap \Omega &  \text{otherwise.} 
		\end{cases}
		\]
		We note that in the first case $F_{t} \difsim E = B_{t} \cap \Omega \setminus E$, while in the second case $F_{t} \difsim E = B_{t} \cap \Omega \cap E$. In any case, we have $F_{t} \difsim E \subset \subset B_{r} \cap \Omega$. Thus, by the almost-minimality of $E$ in $\Omega$, and for almost all $0<t<r$, we infer that either
		\begin{align} \label{2a}
			\pr(E;B_{r}) & \leq \pr(F_{t};B_{r}) + \mu(t)^{\frac{n-1}{n}} \psi(r) \\ 
			\nonumber & \leq \pr(E;B_{r} \setminus \overline{B_{t}}) +  \Hau^{n-1}(\de B_{t} \cap \Omega \setminus E) + \mu(r)^{\frac{n-1}{n}} \psi(r) \, ,
		\end{align}
		or
		\begin{align} \label{2b}
			\pr(E;B_{r}) & \leq \pr(F_{t};B_{r}) + m(t)^{\frac{n-1}{n}} \psi(r) \\ 
			\nonumber & \leq \pr(E;B_{r} \setminus \overline{B_{t}}) + \Hau^{n-1}(\de B_{t} \cap \Omega \cap E) + m(r)^{\frac{n-1}{n}} \psi(r) \, .
		\end{align}
		where $\psi(r) := \psi_{\Omega}(E;0,r)$. Taking the limit as $t \nearrow r$ in \eqref{2a} and \eqref{2b}, and using \eqref{eq:relisop}, we deduce that, if $m(r) > \mu(r)$, then for almost all $0<r<r_{0}$ we have
		\begin{equation*}
			2\mu'(r) + \mu(r)^{\frac{n-1}{n}} \psi(r) \geq C_{0} \mu(r)^{\frac{n-1}{n}} \, ,
		\end{equation*}
		while otherwise, by \eqref{eq:relisop-bis}, we have
		\begin{equation*}
			2m'(r) + m(r)^{\frac{n-1}{n}} \psi(r) \geq C_{0} 
			m(r)^{\frac{n-1}{n}}\,.
		\end{equation*}
		Therefore, calling $s(r) := \min \{ m(r), \mu(r) \}$ and owing to the infinitesimality of $\psi(r)$ as $r\to 0$, we obtain
		\begin{equation*}
			\dfrac{s'(r)}{s(r)^{\frac{n-1}{n}}} \geq C \, ,
		\end{equation*}
		for $0 < r < r_{0}$ and for some explicit constant $C > 0$ depending on $C_{0}$. Integrating this inequality on the interval $(0,r)$ we obtain \eqref{eq:densityvol}. Then, the first inequality in \eqref{eq:densityper} follows from \eqref{eq:densityvol} and \eqref{eq:relisop}. Finally, the second inequality in \eqref{eq:densityper} follows from the observation that, taking the limit as $t \nearrow r$ in \eqref{2a} and possibly redefining $\overline{r}$ and $C$, we have
		\begin{align*}
			\pr(E;B_{r}) & \leq \Hau(\de B_{r} \cap \Omega \setminus E) + \mu(r)^{\frac{n-1}{n}} \psi(r) \leq C r^{n-1} \, ,
		\end{align*}
		for every $0 < r < r_{0}$.
\end{proof}

\section{The visibility property}
\label{sec:visibility}
In this section, we introduce the visibility property and its main consequences. In what follows, $\Omega \subset \R^n$ denotes an open set with Lipschitz boundary such that $0 \in \de \Omega$ and $\de \Omega$ is a graph in a neighborhood of $0$, as in \eqref{eq:omegagraph}. For notational convenience, we will only consider the visibility property at $0$, but of course, we could equally define the property at a generic point of $\de \Omega$. 
\begin{definition} \label{def:visib}
We say that $\Omega$ satisfies the \textit{visibility property} provided there exist $T>0$ and a function $u \in C^1([0,T))$\footnote{By $u\in C^{1}([0,T))$ we mean that $u\in C^{1}(0,T)$ and there exist finite limits of $u(t)$ and $u'(t)$ as $t\to 0$.} such that:
\begin{itemize}
\item[(V1)] $u(0)=u'(0)=0$ and $0\le u' \le 2^{-1}$;
\item[(V2)] The function
\[
\gamma_{u}(t) := t^{-1}\sup_{0<s\le t} \sqrt{\frac{u(s)}{s}+u'(s)}
\]
is summable on $(0,T)$;
\item[(V3)] for all $0 < t < T$, the segment joining the point $U_t = -u(t) \, e_{n}$ with a point $x$ belonging to $\de \Omega \cap B_{t}$ does not intersect $\Omega$.
\end{itemize}
\end{definition}
\begin{remark}\label{rem:V2strong}
We note that (V1) and (V2) imply that $u(t) = o(t)$ and 
\begin{equation}\label{eq:tgammazero}
t\gamma_{u}(t)\to 0\,,
\end{equation}
as $t\to 0$. Moreover, since by (V1) we have $0\le u(t) \le t/2$, we infer that
\begin{equation*}
0\le\frac{u(t)}{t^{2}} = t^{-1}\frac{u(t)}{t} \le t^{-1}\sqrt{\frac{u(t)}{t}} \le \gamma_{u}(t)\,.
\end{equation*}
Therefore the summability of $\gamma_u$ implies that of $t^{-2} u(t)$.
\end{remark}

In the following proposition, we rewrite the assumption (V3) in the form of a property involving the functions $\omega(x')$ and $u(t)$. This will be particularly useful when checking the visibility property for relevant classes of domains (see the examples at the end of the section).
\begin{proposition} \label{prop:equivisib}
The set $\Omega$ satisfies the property (V3) in Definition \ref{def:visib} if and only if, for any $\nu \in \de B'_1$ and for all $0 < t < T$, the slope $m_t(s)$ of the line connecting $U_t$ with $(s,\omega(s\nu))$, that is given by
\begin{equation*}
m_t(s) = \dfrac{\omega(s\nu) + u(t)}{s} \, ,
\end{equation*}
is non-increasing as a function of $s$, for $s > 0$ such that $s^{2} + \omega(s\nu)^{2} < t^2$.
\end{proposition}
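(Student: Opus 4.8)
The plan is to reduce the three-dimensional geometric condition (V3) to a one-variable slope inequality by slicing along two-dimensional half-planes containing the $x_n$-axis. First I would fix $t\in(0,T)$ and a point $x\in\de\Omega\cap B_t$, and use the graph representation \eqref{eq:omegagraph} to write $x=(x',\omega(x'))$ (legitimate since $|x|<t$ with $T$ small). If $x'=0$ then $x=0$ and $[U_t,0]$ lies on $\{\tau e_n:-u(t)\le\tau\le0\}$, whose points have $n$-th coordinate $\le0=\omega(0)$ and hence lie outside $\Omega$; this case is vacuous. If $x'\ne0$, set $s_0=|x'|>0$ and $\nu=x'/|x'|\in\de B'_1$. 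Every point of $[U_t,x]$ has its first $n-1$ coordinates equal to $\lambda s_0\nu$ for some $\lambda\in[0,1]$, so the segment lies inside the half-plane $H_\nu=\{(s\nu,h):s\ge0\}$; in the coordinates $(s,h)$ on $H_\nu$ one has $U_t=(0,-u(t))$, $x=(s_0,\omega(s_0\nu))$ with $s_0^2+\omega(s_0\nu)^2=|x|^2<t^2$, and $\Omega\cap H_\nu$ is the local epigraph $\{h>\omega(s\nu)\}$. Parametrizing the segment by $s=\lambda s_0\in[0,s_0]$, its height is the affine function $L(s)=-u(t)+s\,m_t(s_0)$, and since $L(s)<3m$ for $t$ small, a point $(s\nu,L(s))$ lies in $\Omega$ precisely when $L(s)>\omega(s\nu)$.

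Hence $[U_t,x]$ avoids $\Omega$ if and only if $L(s)\le\omega(s\nu)$ for every $s\in[0,s_0]$. This holds trivially at $s=0$ (there $-u(t)\le0=\omega(0)$ by (V1)), and for $s\in(0,s_0]$ it rearranges, dividing by $s$, exactly to $m_t(s_0)\le m_t(s)$. Now I would let $x$ run over all of $\de\Omega\cap B_t$, i.e. $\nu$ over $\de B'_1$ and $s_0$ over $\{s>0:s^2+\omega(s\nu)^2<t^2\}$, and $t$ over $(0,T)$: the previous paragraph shows that (V3) is equivalent to requiring, for every such $\nu$, $t$ and $s_0$, that $m_t(s)\ge m_t(s_0)$ for all $0<s\le s_0$. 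Since $s\mapsto s^2+\omega(s\nu)^2$ vanishes at $s=0$, the admissible radii for a given $t$ are exactly the range on which the slope is being tested, so this is precisely the assertion that $s\mapsto m_t(s)$ is non-increasing on $\{s>0:s^2+\omega(s\nu)^2<t^2\}$ for every $\nu\in\de B'_1$ and $t\in(0,T)$, which is the claimed characterization.

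The argument is essentially a change of viewpoint, so I expect the only delicate part to be the geometric bookkeeping in the first step: checking that the segment stays in a single plane $H_\nu$, that inside $H_\nu$ ``not meeting the open set $\Omega$'' coincides with ``staying weakly below the graph $h=\omega(s\nu)$'' (hence the upper bound $3m$ is irrelevant at small scales), and that the endpoint constraint at $s=0$ is automatic. With this in hand, (V3) becomes the one-dimensional comparison $m_t(s_0)\le m_t(s)$ for $0<s\le s_0$, and matching the range of tested radii to the set $\{s>0:s^2+\omega(s\nu)^2<t^2\}$ only uses that $|x|^2=s_0^2+\omega(s_0\nu)^2<t^2$ keeps all those radii below $t$, so that the whole segment is examined inside $B_t$.
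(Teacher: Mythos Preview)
Your reduction to two-dimensional slices and the reformulation ``$[U_t,x]$ avoids $\Omega$ if and only if $m_t(s_0)\le m_t(s)$ for all $s\in(0,s_0]$'' is correct and is exactly the mechanism behind the paper's proof. The direction (V3) $\Rightarrow$ ``$m_t$ non-increasing on $A_t$'' (where $A_t=\{s>0:s^2+\omega(s\nu)^2<t^2\}$) then follows immediately, as you say.

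The gap is in the converse. You have shown that (V3) is equivalent to the property
\[
(\mathrm{P})\quad \forall\,s_0\in A_t,\ \forall\,s\in(0,s_0]:\ m_t(s)\ge m_t(s_0),
\]
and you then assert that $(\mathrm{P})$ ``is precisely the assertion that $m_t$ is non-increasing on $A_t$''. This identification is only immediate if $A_t$ is an interval of the form $(0,\sigma)$, so that $(0,s_0]\subset A_t$ whenever $s_0\in A_t$. But for a general Lipschitz $\omega$ the map $s\mapsto s^2+\omega(s\nu)^2$ need not be monotone, and $A_t$ may have several components; there can exist $s\in(0,s_0)$ with $P(s)=(s\nu,\omega(s\nu))\notin B_t$ even though $P(s_0)\in B_t$. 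For such $s$, the hypothesis ``$m_t$ non-increasing on $A_t$'' says nothing directly, and your sentence ``$s_0^2+\omega(s_0\nu)^2<t^2$ keeps all those radii below $t$'' only gives $s_0<t$, not $s\in A_t$.

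The paper closes exactly this gap with a short continuity argument: if the segment enters $\Omega$ at parameter $\lambda s_0$ (so $\omega_\nu(\lambda s_0)<L(\lambda s_0)$), then, since at $s_0$ the graph meets the line, one can pick $s_1=\lambda_\delta s_0\in(\lambda s_0,s_0)$ with $0<L(s_1)-\omega_\nu(s_1)<\delta$; then $P(s_1)$ is within $\delta$ of a point of the segment $[U_t,P(s_0)]\Subset B_t$, hence $P(s_1)\in B_t$, i.e.\ $s_1\in A_t$, while still $m_t(s_1)<m_t(s_0)$, contradicting monotonicity on $A_t$. Adding this step to your argument makes it complete and essentially identical to the paper's proof.
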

\begin{proof}
Let us assume that (V3) holds, and set $\omega_{\nu}(s) = \omega(s\nu)$ for more simplicity. By contradiction, let $s_{1} < s_{2}$ be such that $s_{i}^{2} + \omega_{\nu}(s_i)^{2} < t^2$, for $i = 1,$ $2$, and $m_t(s_1) < m_t(s_2)$. By definition of $m_t$, we have
\begin{equation*}
\dfrac{\omega_{\nu}(s_1) + u(t)}{s_1} < \dfrac{\omega_{\nu}(s_2) + u(t)}{s_2} \, ,
\end{equation*}
and so equivalently
\begin{equation*}
\omega_{\nu}(s_1) < \dfrac{s_1}{s_2} \big(\omega_\nu(s_2) + u(t)\big) - u(t) \, .
\end{equation*}
This implies that the point
\begin{equation*}
x = \left(s_1 \nu , \dfrac{s_1}{s_2}\big(\omega_{\nu}(s_2) + u(t)\big) - u(t)\right)
\end{equation*}
is internal to $\Omega$ and lies on the segment connecting $(s_2 \nu , \omega_\nu(s_2))$. This contradicts (V3). 

Conversely, let us suppose that $m_t(s)$ is non-increasing in $s$, for $s > 0$ such that $s^2 + \omega_\nu(s)^2 < t^2$. Set $P(s) = (s \nu , \omega_\nu(s))$, then, arguing by contradiction, assume that $s_2 > 0$ is such that $s_2^2 + \omega_\nu(s_2)^2 < t^2$ and there exists $\lambda \in (0,1)$ with the property 
\begin{equation*}
(1 - \lambda) U_t + \lambda P(s_2) = (\lambda s_2 , \lambda (u(t) + \omega_\nu(s_{2})) - u(t)) \in \Omega \, .
\end{equation*}
Then $\omega_\nu(\lambda s_2) < \lambda (u(t) + \omega_\nu(s_2)) - u(t)$. By continuity, for all $\delta > 0$ there exists $\lambda_{\delta}\in (\lambda,1)$ such that
\begin{equation} \label{eq:lambdadelta}
\lambda_{\delta} (u(t) + \omega_\nu(s_2)) - u(t) - \delta < \omega_\nu(\lambda_{\delta} \, s_2) < \lambda_{\delta} (u(t) + \omega_\nu(s_2)) - u(t) \, .
\end{equation}
Since the segment $[U_t, P(s_2)]$ is compactly contained in $B_t$, by \eqref{eq:lambdadelta}, we can pick $\delta > 0$ small enough and a correspondent $\lambda_{\delta}$ such that
\begin{equation} \label{eq:condslambda}
P(\lambda_{\delta} s_2) \in B_t \, .
\end{equation}
Let $s_1 = \lambda_{\delta} s_2$. We observe that \eqref{eq:condslambda} and \eqref{eq:lambdadelta} imply
\begin{equation*}
m_t(s_1) < m_t(s_2) \, , \qquad s_1^2 + \omega_\nu(s_1)^2 < t^2 \, ,
\end{equation*}
and this contradicts our hypothesis. This completes the proof of the proposition.
\end{proof}
\begin{corollary} \label{cor:gradomega}
Assume that $\omega$ satisfies
\begin{equation} \label{eq:omegavis}
\langle x' , \nabla \omega (x') \rangle \leq \omega(x') + u(|x'|) \qquad \text{ for a.e. } x' \in B'_{\rho} \, ,
\end{equation}
where $u: (0,T) \rightarrow \R$ is a non-decreasing function satisfying properties (V1) and (V2). Then $\Omega$ satisfies the visibility property.
\end{corollary}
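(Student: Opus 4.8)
The plan is to establish condition (V3) of Definition \ref{def:visib} by means of Proposition \ref{prop:equivisib}: since $u$ is assumed to satisfy (V1) and (V2), the visibility property will follow as soon as we show that for every $\nu\in\de B'_1$ and every $0<t<T$ the slope
\[
m_t(s) = \frac{\omega(s\nu) + u(t)}{s}
\]
is non-increasing in $s$ on the set $I_{t,\nu} := \{\,s>0 : s^2 + \omega(s\nu)^2 < t^2\,\}$. Possibly after shrinking $T$, we may assume $T\le\rho$, so that $s\nu\in B'_\rho$ whenever $0<s<t<T$ and \eqref{eq:omegavis} is available along the relevant rays.

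The core computation is a one-line differentiation. Fix $\nu$; since $\omega$ is Lipschitz, $g(s):=\omega(s\nu)$ is Lipschitz on $(0,\rho)$, so $m_t$ is locally absolutely continuous on $(0,\rho)$, and at every $s$ where $g$ is differentiable with $g'(s)=\langle\nu,\nabla\omega(s\nu)\rangle$,
\[
m_t'(s) = \frac{s\,g'(s) - g(s) - u(t)}{s^2} = \frac{\langle s\nu,\nabla\omega(s\nu)\rangle - \omega(s\nu) - u(t)}{s^2}\,.
\]
By \eqref{eq:omegavis} evaluated at $x'=s\nu$, the numerator is at most $u(|s\nu|)-u(t)=u(s)-u(t)$, which by monotonicity of $u$ is $\le 0$ for every $s<t$. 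Hence $m_t'(s)\le 0$ for a.e.\ $s\in(0,t)$. If $s_1<s_2$ both lie in $I_{t,\nu}$ then $s_2<t$ and thus $(s_1,s_2)\subset(0,t)$, so $m_t$, being Lipschitz on $[s_1,s_2]\subset(0,\rho)$, is absolutely continuous there and $m_t(s_1)\ge m_t(s_2)$ follows by integrating the inequality $m_t'\le 0$.

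The only point requiring care is that \eqref{eq:omegavis} holds merely $\Leb^{n-1}$-a.e., and that the identity $g'(s)=\langle\nu,\nabla\omega(s\nu)\rangle$ is meaningful only where $\omega$ is differentiable. I would resolve this by a Fubini argument in polar coordinates: letting $N\subset B'_\rho$ be the $\Leb^{n-1}$-null set of points where $\omega$ is not differentiable or where \eqref{eq:omegavis} fails, for $\Hau^{n-2}$-a.e.\ $\nu\in\de B'_1$ the ray $\{s\nu:s>0\}$ meets $N$ in an $\Leb^1$-null set of parameters $s$; for each such ``good'' $\nu$ the computation above is valid for a.e.\ $s$, so $m_t$ is non-increasing on $I_{t,\nu}$ for every $t$. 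To pass to an arbitrary $\nu_0\in\de B'_1$, fix $t$ and $s_1<s_2$ in $I_{t,\nu_0}$ and pick a sequence $\nu_k\to\nu_0$ of good directions (their set is $\Hau^{n-2}$-conull, hence dense); by continuity of $\omega$ one has $s_1,s_2\in I_{t,\nu_k}$ for $k$ large, so $m_t(s_1)\ge m_t(s_2)$ holds with $\nu=\nu_k$ and passes to the limit. This verifies the hypothesis of Proposition \ref{prop:equivisib}, hence (V3), and the visibility property follows. The only genuine difficulty is this last upgrade from an a.e.\ differential inequality to monotonicity along every ray; the differentiation of $m_t$ itself is elementary.
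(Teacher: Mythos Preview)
Your proof is correct and follows essentially the same route as the paper: differentiate $m_t$, use \eqref{eq:omegavis} together with the monotonicity of $u$ to get $m_t'\le 0$, and invoke Proposition \ref{prop:equivisib}. The paper's proof simply asserts that \eqref{eq:omegavis} yields $s\,\omega_\nu'(s)\le \omega_\nu(s)+u(s)$ for a.e.\ $s$ without further comment; your Fubini-and-continuity argument to pass from an $\Leb^{n-1}$-a.e.\ hypothesis to monotonicity along \emph{every} ray makes explicit a step the paper leaves implicit, so your version is in fact more careful at the one delicate point.
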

\begin{proof}
Since $\omega$ is Lipschitz, the function $m_{t}$ defined in the statement of Proposition \ref{prop:equivisib} is almost everywhere differentiable, thus $m_t$ is non-increasing if and only if $m_{t}'(s) \leq 0$ at almost every $s$. We observe that
\begin{equation*}
m_{t}'(s) = \dfrac{\omega_{\nu}'(s)}{s} - \dfrac{\omega_{\nu}(s) + u(t)}{s^{2}} \, ,
\end{equation*}
thus $m_{t}' \leq 0$ if and only if
\begin{equation} \label{eq:slope}
s \, \omega_{\nu}'(s) \leq \omega_{\nu}(s) + u(t) \, .
\end{equation}
The hypothesis \eqref{eq:omegavis} implies that 
\begin{equation} \label{eq:slopestrong}
s \, \omega_{\nu}'(s) \leq \omega_{\nu}(s) + u(s) \qquad \text{for almost all $0 < s < T \, .$}
\end{equation}
Hence if $s > 0$ is such that $s^2 + \omega_\nu(s)^2 < t^2$, then $s < t$, and by \eqref{eq:slopestrong}, since $u$ is non-decreasing, we obtain
\begin{equation*}
s \, \omega_{\nu}'(s) \leq \omega_{\nu}(s) + u(t) \, ,
\end{equation*}
which is precisely \eqref{eq:slope}. Consequently, (V3) is verified thanks to Proposition \ref{prop:equivisib}. 
\end{proof} 
	
\subsection{Existence of the tangent cone}
An important consequence of the visibility property is the existence of the tangent cone to $\Omega$ at $0$.
	\begin{proposition} \label{prop:tangcone}
		Let $\Omega \subset \R^{n}$ satisfy the visibility property. Then there exists the tangent cone to $\Omega$ at $0$, denoted by $\Omega_{0}$. More precisely, if we set $\Omega_{s} := s^{-1} \Omega$ for $s > 0$, we obtain
		\begin{equation} \label{eq:tangcone}
			\lim_{s \rightarrow 0^{+}} \Hdist(\Omega_{s} \cap B_{R}, \Omega_{0} \cap B_{R}) = 0 \, , \qquad \text{for all $R > 0$.}
		\end{equation}
	\end{proposition}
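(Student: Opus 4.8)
The plan is to reduce the statement to the local uniform convergence of the graphing functions. By \eqref{eq:omegagraph}, inside the cylinder $s^{-1}\cC_{\rho,3m}$ the rescaled domain $\Omega_{s}=s^{-1}\Omega$ is the epigraph of the $L$-Lipschitz function $\omega_{s}(x'):=s^{-1}\omega(sx')$, which satisfies $\omega_{s}(0)=0$; hence for every fixed $R>0$ and every $s$ small enough $\Omega_{s}\cap B_{R}=\{x\in B_{R}\,:\,x_{n}>\omega_{s}(x')\}$. I would show that $\omega_{s}$ converges, as $s\to 0^{+}$, locally uniformly on $\R^{n-1}$ to a positively $1$-homogeneous $L$-Lipschitz function $\omega_{0}$; then $\Omega_{0}:=\{x_{n}>\omega_{0}(x')\}$ is an open cone with vertex $0$, and the local uniform convergence of the graphs yields \eqref{eq:tangcone} by an elementary argument (for points of $\Omega_{s}\cap B_{R}$ lying near $\de B_{R}$ one rescales slightly, using that $\Omega_{0}$ is a cone, to land back inside $B_{R}$). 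Since the $\omega_{s}$ are equi-Lipschitz, by Lemma \ref{lem:uniflem} (applied to Lipschitz extensions of the $\omega_{s}$ to all of $\R^{n-1}$) it is enough to prove \emph{pointwise} convergence; writing $x'=r\nu$ with $r>0$ and $\nu\in\de B'_{1}$ we have $\omega_{s}(r\nu)=r\cdot\omega(sr\nu)/(sr)$, so everything reduces to showing that for each $\nu\in\de B'_{1}$ the ratio $a_{\nu}(t):=\omega(t\nu)/t$ has a finite limit $\ell(\nu)$ as $t\to 0^{+}$, after which one sets $\omega_{0}(x'):=|x'|\,\ell(x'/|x'|)$ and $\omega_{0}(0):=0$.

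So fix $\nu\in\de B'_{1}$ and set $\omega_{\nu}(t):=\omega(t\nu)$, an $L$-Lipschitz function on $[0,\rho)$ with $\omega_{\nu}(0)=0$. Put $c:=\sqrt{1+L^{2}}$. By Proposition \ref{prop:equivisib}, property (V3) tells us that for every $0<\tau<T$ the map $t\mapsto\big(\omega_{\nu}(t)+u(\tau)\big)/t$ is non-increasing on $(0,\tau/c)$, this interval being contained in the range $\{t:t^{2}+\omega_{\nu}(t)^{2}<\tau^{2}\}$ since $t^{2}+\omega_{\nu}(t)^{2}\le c^{2}t^{2}$. Being monotone and locally Lipschitz on $(0,\tau/c)$, that map has non-positive a.e.\ derivative there, i.e.\ $t\,\omega_{\nu}'(t)\le\omega_{\nu}(t)+u(\tau)$ for a.e.\ $t\in(0,\tau/c)$. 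Intersecting the exceptional null sets over rational $\tau$ and then letting $\tau\downarrow ct$ — which is legitimate because $u$ is non-decreasing and continuous by (V1) — we arrive at
\begin{equation*}
t\,\omega_{\nu}'(t)\ \le\ \omega_{\nu}(t)+u(ct)\qquad\text{for a.e.\ }t\in(0,T/c)\,.
\end{equation*}

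Consequently $a_{\nu}(t)=\omega_{\nu}(t)/t$, which is locally Lipschitz on $(0,T/c)$, satisfies $a_{\nu}'(t)=\big(t\,\omega_{\nu}'(t)-\omega_{\nu}(t)\big)/t^{2}\le u(ct)/t^{2}=:\rho(t)$ a.e., and $\rho$ is summable near $0$ because, by the change of variable $r=ct$ and the estimate $u(r)/r^{2}\le\gamma_{u}(r)$ from \eqref{eq:V2s1},
\[
\int_{0}^{T/c}\rho(t)\,dt\ =\ c\int_{0}^{T}\frac{u(r)}{r^{2}}\,dr\ \le\ c\int_{0}^{T}\gamma_{u}(r)\,dr\ <\ \infty\,.
\]
Therefore $t\mapsto a_{\nu}(t)-\int_{0}^{t}\rho$ is non-increasing on $(0,T/c)$ and bounded (as $|a_{\nu}|\le L$), so it has a finite limit as $t\to 0^{+}$; since $\int_{0}^{t}\rho\to 0$, this forces $a_{\nu}(t)\to\ell(\nu)$ for some $\ell(\nu)\in[-L,L]$, which is the pointwise convergence sought above. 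Together with the reduction of the first paragraph, this proves the proposition.

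The delicate point is the passage from (V3) to the pointwise differential inequality $t\,\omega_{\nu}'(t)\le\omega_{\nu}(t)+u(ct)$: one must cope both with the fact that the interval on which Proposition \ref{prop:equivisib} grants monotonicity depends on the auxiliary parameter $\tau$, and with the $\tau$-dependence of the a.e.-exceptional sets, which is precisely why the rational-$\tau$ argument is needed. The rest — the summability bookkeeping through \eqref{eq:V2s1}, and the elementary fact that locally uniform convergence of equi-Lipschitz epigraph functions entails local Hausdorff convergence of the corresponding epigraphs — is routine.
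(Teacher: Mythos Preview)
Your proof is correct and follows essentially the same route as the paper's: both reduce to showing that $\omega_{\nu}(t)/t$ has a limit as $t\to 0^{+}$ by deriving the differential inequality $(\omega_{\nu}(t)/t)'\le u(ct)/t^{2}$ from the visibility property, integrating it thanks to the summability in (V2), and then passing from pointwise to local uniform convergence of the $\omega_{s}$ via the equi-Lipschitz bound and Lemma~\ref{lem:uniflem}. The only cosmetic difference is that you invoke Proposition~\ref{prop:equivisib} and a rational-$\tau$ argument to reach the differential inequality, whereas the paper appeals to (V3) directly at a differentiability point (which in effect is the same computation, since the exceptional set where $\omega_{\nu}'$ fails to exist is $\tau$-independent); your extra care there is harmless but not strictly needed.
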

	\begin{proof}
		Let us fix $\nu \in \de B'_{1}$, and let
		\begin{equation*}
			\omega_{\nu}(s) = \omega(s\nu) \, , \qquad s \geq 0 \, ,
		\end{equation*}
		where $\omega$ is the function realizing \eqref{eq:omegagraph}. Let $s > 0$ be a point where $\omega_{\nu}$ is differentiable and let $t = \sqrt{s^{2} + \omega_{\nu}(s)^{2}}$. By (V3) in Definition \ref{def:visib}, we deduce that the slope of the line connecting $(s,\omega_{\nu}(s))$ with $U_t$ needs to be bounded below by $\omega_{\nu}'(s)$, that is,
\[
\omega_{\nu}'(s) \leq \dfrac{\omega_{\nu}(s) + u(t)}{s} \, .
\]
We set $L= \sqrt{1 + \Lip(\omega)^{2}}$ and observe that $t \leq L s$. Since $u$ is non-decreasing by (V1), we infer 
\begin{equation*}
\omega_{\nu}'(s) \leq \dfrac{\omega_{\nu}(s) + u(L s)}{s}\,, 
\end{equation*}
hence we get
\begin{equation} \label{eq:monotomega}
\left( \dfrac{\omega_{\nu}(s)}{s} \right)' = \dfrac{\omega_{\nu}'(s)}{s} - \dfrac{\omega_{\nu}(s)}{s^{2}} \leq \dfrac{u(Ls)}{s^{2}} \leq L^{2} \dfrac{u(Ls)}{(L s)^{2}} \, .
\end{equation}
We integrate \eqref{eq:monotomega} between $s_{1} < s_{2}$ thanks to (V2) (see Remark \ref{rem:V2strong}), and obtain
\begin{equation*}
\dfrac{\omega_{\nu}(s_{2})}{s_{2}} - \dfrac{\omega_{\nu}(s_{1})}{s_{1}} \leq L^{2} \int_{s_{1}}^{s_{2}} \dfrac{u(Ls)}{(Ls)^{2}} \, ds = L \int_{L s_{1}}^{L s_{2}} \dfrac{u(t)}{t^{2}} \, dt \, .
\end{equation*}
Thus we conclude that the function
\begin{equation*}
s\mapsto \dfrac{\omega_{\nu}(s)}{s} - L \int_{0}^{L s} \dfrac{u(t)}{t^{2}} \, dt
\end{equation*}
is monotonically non-increasing in $s$ and bounded by $\Lip(\omega) + L \int_{0}^{L s} t^{-2} u(t) \, dt$, for $0 < s < L^{-1} T$. Therefore there exists 
\begin{equation*}
D_{\nu}^{+} \omega(0) := \lim_{s \rightarrow 0^{+}} \dfrac{\omega_{\nu}(s)}{s} = \lim_{s \rightarrow 0^{+}} \dfrac{\omega_{\nu}(s)}{s} - L \int_{0}^{L s} \dfrac{u(t)}{t^{2}} \, dt \in \R\, .
\end{equation*}
Let us define
\begin{equation*}
\omega_{0}(x') = 
\begin{cases}
|x'| D_{\frac{x'}{|x'|}}^{+} \omega (0) & \text{if } x' \neq 0,\\ 
0 &  \text{if } x' = 0 \, . 
\end{cases}
\end{equation*}
		The function $\omega_{0}$ is $1$-homogeneous, therefore the set
		\begin{equation*}
			\Omega_{0} = \{ x \in \R^{n} \, : \, x_{n} > \omega_{0}(x') \}
		\end{equation*}
		is a cone with vertex at $0$. Now, for all $s > 0$, we set
		\begin{equation*} 
			\omega_{s}(x') = 
			\begin{cases}
				\dfrac{\omega(sx')}{s} & \text{if } x' \neq 0,\\ 
				0 &  \text{if } x' = 0 \, . 
			\end{cases}
		\end{equation*}
		It is immediate to observe that $\omega_{s}(0) = 0$ and, setting $t = s |x'|$,
		\begin{align*}
			\omega_{s}(x') = \dfrac{\omega \left(t (x' / |x'|)  \right)}{t} \, |x'| \rightarrow |x'| \, D^{+}_{\frac{x'}{|x'|}} \omega (0) = \omega_{0}(x') \, , \qquad \text{as $s \ra 0^+ \, .$}
		\end{align*}
		Since $\{ \omega_{s} \}_{s > 0}$ is a one-parameter family of locally equi-bounded and equi-Lipschitz functions that pointwisely converge to $\omega_{0}$ as $s \rightarrow 0^{+}$, we can apply Lemma \ref{lem:uniflem} to conclude that this convergence is locally uniform. This easily implies the Hausdorff convergence stated in \eqref{eq:tangcone}.
	\end{proof}
\begin{remark}
We note that for the proof of Proposition \ref{prop:tangcone}, the hypothesis (V2) of Definition \ref{def:visib} can be replaced by the weaker hypothesis of summability of $t^{-2}u(t)$ on $(0,T)$. We also observe that, if $\Omega$ is convex, then the existence of the tangent cone $\Omega_0$ is always granted, even though (V2) may not be satisfied.
\end{remark}
	
\subsection{An off-centric visibility property}
The next lemma shows that the assumption (V3) in Definition \ref{def:visib} can be replaced by an equivalent assumption, where off-centric balls are taken instead of balls centered at $0$. This off-centric visibility property will be useful later on. 
\begin{lemma} \label{lem:visibalt}
The following properties are equivalent:
\begin{itemize}
\item[(i)] $\Omega$ satisfies the visibility property;
\item[(ii)] There exist $R > 0$ and a function $v \in C^{1}(0,R)$ satisfying properties (V1), (V2) of Definition \ref{def:visib}, and
\begin{center}
\begin{minipage}{.8\textwidth}
(V3') for all $0 < r < R$, any segment joining the point $V_r = -v(r) \, e_{n}$ with a point $x$ belonging to $\de \Omega \cap B_{r}(V_r)$ does not intersect $\Omega$.
\end{minipage}
\end{center}
\end{itemize}
\end{lemma}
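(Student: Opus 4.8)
The plan is to prove the two implications separately, in each case producing the required defining function from the other one by a simple rescaling of the radial parameter. The elementary geometric fact underlying both directions is the chain of inclusions
\[
B_{r}(V_{r})\subset B_{r+v(r)}\subset B_{2r},\qquad B_{t}\subset B_{t+u(t)}(U_{t})\subset B_{2t},
\]
which hold because $0\le u'\le 2^{-1}$ and $u(0)=0$ force $u(s)\le s/2$ (and likewise $v(s)\le s/2$ once $v$ is defined below). Since $V_{r}$ and $U_{t}$ both lie on the half-line $\{-s\,e_{n}:s\ge 0\}$, these inclusions let us transfer the ``no-intersection'' property for segments issued from the center of a ball to segments issued from the center of a concentric, slightly larger ball, as long as the two centers are made to coincide.

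For (i)$\Rightarrow$(ii): assume $\Omega$ satisfies the visibility property with $u\in C^{1}([0,T))$ as in Definition \ref{def:visib}, and set $v(r):=u(2r)$ for $0<r<R$, with $R:=T/2$. Then $V_{r}=-v(r)e_{n}=-u(2r)e_{n}=U_{2r}$, and $u(2r)\le r$ gives $B_{r}(V_{r})\subset B_{2r}$, so $\de\Omega\cap B_{r}(V_{r})\subset \de\Omega\cap B_{2r}$. For any $x$ in the latter set, the segment joining $V_{r}=U_{2r}$ to $x$ does not intersect $\Omega$ by (V3) applied with $t=2r<T$, which is exactly (V3'). It remains to verify (V1), (V2) for $v$. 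We have $v(0)=0$, $v'(r)=2u'(2r)\ge 0$, and since $u\in C^{1}([0,T))$ with $u'(0)=0$ we get $v'(r)\to 0$ as $r\to 0^{+}$, so after shrinking $R$ we may assume $0\le v'\le 2^{-1}$ on $(0,R)$, with $v'(0)=0$. For (V2), the substitution $\sigma=2s$ gives $\gamma_{v}(t)=2\sqrt2\,\gamma_{u}(2t)$, hence $\int_{0}^{R}\gamma_{v}(t)\,dt=\sqrt2\int_{0}^{T}\gamma_{u}(\sigma)\,d\sigma<+\infty$.

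The converse (ii)$\Rightarrow$(i) is completely symmetric: given $v\in C^{1}(0,R)$ realizing (V1), (V2) and (V3'), set $u(t):=v(2t)$ for $0<t<T:=R/2$. Then $U_{t}=V_{2t}$, and $v(2t)\le t$ gives $B_{t}\subset B_{2t}(V_{2t})$, so $\de\Omega\cap B_{t}\subset \de\Omega\cap B_{2t}(V_{2t})$; applying (V3') with $r=2t<R$ yields (V3). Properties (V1), (V2) for $u$ follow as before, from $u'(t)=2v'(2t)$ and $\gamma_{u}(t)=2\sqrt2\,\gamma_{v}(2t)$.

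The only genuinely delicate point is the calibration of the dilation factor: off-centering the ball along the $e_{n}$-axis by $v(r)$ is absorbed by doubling the radius — the factor $2$ being exactly what is forced by $v(r)+r\le 2r$ — while the substitution $\sigma=2s$ simultaneously preserves the summability in (V2) and the regularity constraints (V1) (the bound $v'\le 2^{-1}$ being recovered by shrinking $R$, since $v'(0)=0$). Alternatively, one could route the equivalence through the slope-monotonicity reformulation of Proposition \ref{prop:equivisib} and its off-centric analogue, but the direct argument above avoids any further computation.
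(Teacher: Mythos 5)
Your proof is correct and follows essentially the same strategy as the paper: reparametrize the radius so that the off-centre point is unchanged ($V_r=U_{2r}$, resp.\ $U_t=V_{2t}$), use the inclusion of balls forced by $0\le u'\le 2^{-1}$ to transfer the segment condition, and recover $0\le v'\le 2^{-1}$ by shrinking $R$ since $u'(0)=0$, checking (V2) by a change of variables. The only difference is cosmetic: the paper inverts $z(t)=t-u(t)$ to get the exact relation $B_r(V_r)\subset B_{z^{-1}(r)}$, whereas you use the cruder doubling $v(r)=u(2r)$, which avoids the inverse-function step at the cost of a factor $2$; both variants are valid, and you also write out the converse implication that the paper only declares analogous.
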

\begin{proof}
We prove that (i) implies (ii). Let $z(t) = t - u(t)$ where $u$ is as in Definition \ref{def:visib}. We can find $0 < T' < T$ such that $z(t)$ is an increasing $C^1$ diffeomorphism of the interval $(0,T')$ with the property
\begin{equation*}
\dfrac{1}{2} \, t \leq z(t) \leq t \, .
\end{equation*}
Let $R = z(T')$. Then we can consider the inverse $z^{-1}$ of $z$ in $(0,R)$, which is an increasing diffeomorphism such that
\begin{equation} \label{eq:estz-1}
r \leq z^{-1}(r) \leq 2 \, r \, .
\end{equation}
Setting $U_t = - u(t) e_n$, it follows that $B_{t - u(t)}(U_t) \subset B_{t}$, for all $0 < t < T$, thus (V3) holds for all points $x \in \de \Omega \cap B_{t - u(t)}(U_t)$. We then have that
\begin{equation*}
B_{r}(U_{z^{-1}(r)}) \subset B_{z^{-1}(r)} \, , \qquad \text{for any $0 < r < R$.}
\end{equation*}
Any line segment joining $x \in \de \Omega \cap B_{z^{-1}(r)}$ with $U_{z^{-1}(r)}$ does not intersect $\Omega$, hence the same property holds for any $x \in \de \Omega \cap B_{r}(U_{z^{-1}(r)})$. Let
\begin{equation*}
v(r) = u(z^{-1}(r)) \, , \qquad 0 < r < R \, .
\end{equation*}
It is clear that (V3') holds. By \eqref{eq:estz-1}, up to possibly reducing the value of $R$, (V2) and $v' \leq 2^{-1}$ follow. Since both $u$ and $z^{-1}$ are non-decreasing, $v$ is non-decreasing, thus also (V1) is satisfied. A completely analogous argument shows that (ii) implies (i), and the proof is concluded.
\end{proof}
	For all $0 < r < R$, we define
	\begin{equation} \label{eq:Cr}
		\C_{r} = \{ V_r + t(z - V_r) \, : \, z \in \de B_{r}(V_r) \cap \Omega \, , \, t > 0 \} \, .
	\end{equation}
	The set $\C_{r}$ is an open cone with vertex at $V_r$.
	\begin{lemma} \label{lem:Crcontains}
		Assume that $\Omega$ satisfies the (off-centric) visibility property. Then, for all $0 < r < R$, the cone $\C_{r}$ contains $\Omega \cap B_{r}(V_r)$.
	\end{lemma}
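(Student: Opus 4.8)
The plan is to show directly that each $y\in\Omega\cap B_r(V_r)$ lies on an open radius of $B_r(V_r)$ issuing from $V_r$ whose endpoint on $\de B_r(V_r)$ belongs to $\Omega$; by the very definition of $\C_r$ this is precisely the assertion $y\in\C_r$. Equivalently, denoting by $z_y:=V_r+\frac{r}{|y-V_r|}\,(y-V_r)$ the radial projection of $y$ onto $\de B_r(V_r)$ from $V_r$, it suffices to prove that $z_y\in\Omega$ for every such $y$ (note that $y\neq V_r$, so $z_y$ is well defined, once one knows $V_r\notin\Omega$).

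First I would check that the portion of this radius between $y$ and $z_y$ lies in $\Omega$. Set $\sigma(t)=V_r+t(y-V_r)$, so that $\sigma(0)=V_r$, $\sigma(1)=y$ and $\sigma(t^\ast)=z_y$ with $t^\ast=r/|y-V_r|>1$. Arguing by contradiction, if $\sigma(t)\notin\Omega$ for some $t\in(1,t^\ast)$, let $t_0$ be the smallest such value; since $\R^n\setminus\Omega$ is closed and $y$ is an interior point of $\Omega$, we get $t_0\in(1,t^\ast)$ and $\sigma(t_0)\in\de\Omega$, while $|\sigma(t_0)-V_r|<r$ yields $\sigma(t_0)\in\de\Omega\cap B_r(V_r)$. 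Then $y=\sigma(1)$ lies on the segment joining $V_r$ with $\sigma(t_0)$, which by (V3') does not meet $\Omega$, contradicting $y\in\Omega$. (One also uses here that $V_r\notin\Omega$, which is immediate since $\Omega$ is locally the epigraph of $\omega$ with $\omega(0)=0$ and $V_r=(0,-v(r))$, $v(r)\ge0$.) Hence $\sigma([1,t^\ast))\subset\Omega$ and, in particular, $z_y=\sigma(t^\ast)\in\overline\Omega$.

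It then remains to upgrade $z_y\in\overline\Omega$ to $z_y\in\Omega$, i.e. to exclude $z_y\in\de\Omega$. Here I would pass to the graphical description: write $z_y=(s_0\nu,\omega_\nu(s_0))$ with $\nu\in\de B'_1$ and $\omega_\nu(s):=\omega(s\nu)$, and note that $\sigma$ has $x'$-component pointing along $\nu$. In this language the inclusion $\sigma([1,t^\ast))\subset\Omega$ becomes the strict inequality $s^{-1}(\omega_\nu(s)+v(r))<s_0^{-1}(\omega_\nu(s_0)+v(r))$ for $s$ slightly below $s_0$, whereas reasoning exactly as in Proposition \ref{prop:equivisib} (now for the off-centric balls, which is legitimate thanks to Lemma \ref{lem:visibalt}) shows that (V3') forces the slope function $s\mapsto s^{-1}(\omega_\nu(s)+v(r))$ to be non-increasing on the set of parameters for which $(s\nu,\omega_\nu(s))\in B_r(V_r)$. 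Transferring this to parameters just below $s_0$ — after checking that the corresponding graph points still lie inside $B_r(V_r)$, which rests on the way the radius $\sigma$ meets $\de B_r(V_r)$ at $z_y$ (namely $\sigma$ enters $\Omega$ at $z_y$, i.e. comes from above the graph of $\omega$) — contradicts the previous strict inequality. Hence $z_y\in\Omega$, so $y\in\C_r$, and therefore $\Omega\cap B_r(V_r)\subset\C_r$.

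The main obstacle is precisely this last step: the visibility assumption (V3') is stated in terms of open balls, so it does not directly cover the endpoint $z_y$, which sits on $\de B_r(V_r)$; one must push the slope monotonicity to nearby boundary points lying strictly inside $B_r(V_r)$, and this requires a careful discussion of the position of $z_y$ on the sphere $\de B_r(V_r)$. The first step, by contrast, is routine once one observes $V_r\notin\Omega$ and runs the ``first exit point'' argument against (V3').
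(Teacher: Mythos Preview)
Your first step is correct and is exactly the paper's argument: assuming $x\in\Omega\cap B_r(V_r)\setminus\C_r$, the radial projection $z_y$ onto $\de B_r(V_r)$ satisfies $z_y\notin\Omega$, and if the ray first exits $\Omega$ at some parameter strictly before reaching the sphere you get a boundary point in the \emph{open} ball, so (V3') applies and gives the contradiction. The paper stops here, writing ``$x_s\in B_r(V_r)\cap\de\Omega$'' without separating out the borderline case $x_s\in\de B_r(V_r)$; you are right that this case deserves a word.

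However, your proposed treatment of that borderline case has a genuine gap. To run the slope-monotonicity contradiction you need the graph points $(s\nu,\omega_\nu(s))$ to lie in the open ball $B_r(V_r)$ for $s$ just below $s_0$, and your justification (``$\sigma$ comes from above the graph'') does not give this. Knowing that the radius is above the graph only says the graph point has smaller $x_n$-coordinate than a point inside the ball; when $z_y$ sits on the lower hemisphere of $\de B_r(V_r)$ (i.e.\ $\omega_\nu(s_0)+v(r)\le 0$, which is not excluded), decreasing the $x_n$-coordinate can push you \emph{out} of the ball. Concretely, $|(s\nu,\omega_\nu(s))-V_r|^2=s^2(1+m_r(s)^2)$, and from $m_r(s)<m_r(s_0)$ alone (with $m_r(s_0)\le 0$) you cannot conclude this is $<r^2=s_0^2(1+m_r(s_0)^2)$. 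So Proposition~\ref{prop:equivisib} is not available at those parameters, and the argument stalls precisely where you flag it as ``the main obstacle''.

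There is a much cheaper fix that avoids the graphical analysis entirely. Since $0\le v'\le\tfrac12$, for any $r<r'<R$ one has $\overline{B_r(V_r)}\subset B_{r'}(V_{r'})$ (triangle inequality plus $v(r')-v(r)\le\tfrac12(r'-r)$). Hence if $z_y\in\de\Omega\cap\de B_r(V_r)$, then $z_y\in\de\Omega\cap B_{r'}(V_{r'})$, and (V3') at radius $r'$ forces the segment $[V_{r'},z_y]$ to miss $\Omega$. But $y$ lies on the open segment $(V_r,z_y)$, and as $r'\downarrow r$ the segments $[V_{r'},z_y]$ converge uniformly to $[V_r,z_y]$; since $\Omega$ is open, for $r'$ close to $r$ the point on $[V_{r'},z_y]$ at the same parameter as $y$ is still in $\Omega$, contradicting (V3'). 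In other words, (V3') extends by continuity to boundary points on the \emph{closed} ball, which is exactly what the paper uses (implicitly) and what makes its one-line proof go through.
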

	\begin{proof}
		By contradiction, let $x \in (\Omega \cap B_{r}(V_r)) \setminus \C_{r}$. For all $0 < t \leq r$, let
		\begin{equation*}
			x_{t} = V_r + t \, \dfrac{x - V_r}{|x - V_r|} \, ,
		\end{equation*}
		and note that $x = x_{t_{0}}$ for a suitable $0 < t_{0} < r$. It's clear that $x_{r} \notin \Omega$, otherwise $x_{t_{0}}$ would belong to $\C_{r}$, for all $t > 0$, which contradicts our assumption. We can then select a value $s \in (t_{0}, r]$ such that $x_{s} \in \de \Omega$. This leads to a contradiction with the visibility because the segment joining $V_r$ and $x_{s} \in B_{r}(V_r) \cap \de \Omega$ contains $x_{t_{0}} = x \in \Omega$.
	\end{proof}
	
\subsection{Foliation by off-centric spheres}
Let us consider the family of off-centric balls $B_r(V_r)$, with $V_r = - v(r) e_n$, for $0<r<R$. 
By the Implicit Function Theorem we can easily show the existence of a smooth function $\phi$, such that $\de B_{r}(V_r)$ is the $r$-level set of $\phi$. This means that the punctured ball $B_R(V_R) \setminus \{ 0 \}$ is foliated by the spheres $\de B_{s}(V_s) = \phi^{-1}(s)$ for $0 < s < R$.
\begin{lemma} \label{lem:coarea}
There exists a function $\phi \in C^{1}(B_{R}(V_R) \setminus \{ 0 \})$ such that $0 \leq \phi < R$ and
\begin{equation*}
\de B_{r}(V_r) = \phi^{-1}(r) \, , \qquad \text{for any $0 < r < R$.} 
\end{equation*}
In particular, for any $x \in B_{R}(V_R) \setminus \{ 0 \}$ we have
\begin{equation} \label{eq:gradphi}
\dfrac{\nabla \phi(x)}{|\nabla \phi(x)|} = \dfrac{x - V_{\phi(x)}}{|x - V_{\phi(x)}|}
\end{equation}
and 
\begin{align}
\left| \nabla \phi (x) - \dfrac{x}{|x|} \right| \leq  4 \, \sqrt{\dfrac{v(\phi(x))}{\phi(x)} + v'(\phi(x))} \, . \label{eq:Dphi-1}
\end{align}
\end{lemma}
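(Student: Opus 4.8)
The plan is to realize $\phi$ as the implicit solution, with respect to $r$, of the equation describing $\de B_r(V_r)$, and then read off the gradient formula from the implicit function theorem. First I would set
\[
g(x,r) := |x - V_r|^{2} - r^{2} = |x'|^{2} + (x_n + v(r))^{2} - r^{2}\,,
\]
which is of class $C^{1}$ on $\big(B_R(V_R)\setminus\{0\}\big)\times(0,R)$ since $v\in C^{1}(0,R)$, and whose zero set in $x$ (for fixed $r$) is exactly $\de B_r(V_r)$. Because $0\le v'\le 2^{-1}$, one checks the nestedness $B_r(V_r)\subset B_s(V_s)$ for $r<s$, so that $B_R(V_R)=\bigcup_{0<r<R}B_r(V_r)$ and $\de B_r(V_r)\subset B_R(V_R)\setminus\{0\}$ for every $0<r<R$. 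The crucial elementary fact is that
\[
\de_r g(x,r) = 2\big(x_n+v(r)\big)v'(r) - 2r\,,
\]
and whenever $g(x,r)=0$ we have $|x_n+v(r)| = |(x-V_r)_n|\le |x-V_r| = r$, hence, using $v'\le 2^{-1}$,
\[
\de_r g(x,r)\le 2\cdot\tfrac12\,r - 2r = -r < 0\,.
\]

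Next I would show that for each $x\in B_R(V_R)\setminus\{0\}$ there is a \emph{unique} $r=\phi(x)\in(0,R)$ with $g(x,\phi(x))=0$. Existence follows from the intermediate value theorem: $g(x,r)\to|x|^{2}>0$ as $r\to0^{+}$, while $g(x,r^{*})<0$ for some $r^{*}<R$ because $x\in B_{r^{*}}(V_{r^{*}})$ by the nestedness above. Uniqueness follows from the sign of $\de_r g$ on the zero set: if $r_1<r_2$ were two zeros, with $r_1$ minimal, then $g(x,\cdot)>0$ on $(0,r_1)$ and, since $\de_r g(x,r_1)<0$, $g(x,\cdot)<0$ just to the right of $r_1$; at the next zero $r_2$ the function $g(x,\cdot)$ would approach $0$ from below, forcing $\de_r g(x,r_2)\ge0$, contradicting $\de_r g(x,r_2)\le-r_2<0$. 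This defines $\phi$ with $0<\phi<R$ and, by construction, $\phi^{-1}(r)=\de B_r(V_r)$. Since $\de_r g\neq0$ on the zero set, the implicit function theorem yields $\phi\in C^{1}(B_R(V_R)\setminus\{0\})$ and
\[
\nabla\phi(x) = -\frac{\nabla_x g(x,\phi(x))}{\de_r g(x,\phi(x))} = \frac{x - V_{\phi(x)}}{\phi(x) - \big(x_n+v(\phi(x))\big)v'(\phi(x))}\,,
\]
where the scalar denominator equals $-\tfrac12\de_r g(x,\phi(x))\ge \phi(x)/2>0$; since it is positive, normalizing gives \eqref{eq:gradphi}.

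For the quantitative estimate \eqref{eq:Dphi-1}, write $r=\phi(x)$ and $\omega := (x-V_r)/|x-V_r| = (x-V_r)/r$, and split
\[
\Big|\nabla\phi(x) - \tfrac{x}{|x|}\Big|\le \big|\nabla\phi(x)-\omega\big| + \Big|\omega - \tfrac{x}{|x|}\Big|\,.
\]
Setting $a := \big(x_n+v(r)\big)v'(r)/r$, the gradient formula reads $\nabla\phi(x) = \omega/(1-a)$ with $|a|\le v'(r)\le\tfrac12$, whence $\big|\nabla\phi(x)-\omega\big| = |a|/(1-a)\le 2|a|\le 2v'(r)$; and the elementary bound $\big|\tfrac{p}{|p|}-\tfrac{q}{|q|}\big|\le 2|p-q|/\max\{|p|,|q|\}$ applied with $p=x$, $q=x-V_r$ (so $|p-q| = v(r)$ and $\max\{|p|,|q|\}\ge|x-V_r| = r$) gives $\big|\omega - x/|x|\big|\le 2v(r)/r$. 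Hence $\big|\nabla\phi(x)-x/|x|\big|\le 2\big(v'(r)+v(r)/r\big)$, and since $v'(r)\le\tfrac12$ and $v(r)/r\le\tfrac12$, the quantity $\tau := v'(r)+v(r)/r$ lies in $[0,1]$, so $2\tau\le2\sqrt\tau\le 4\sqrt{v'(\phi(x))+v(\phi(x))/\phi(x)}$, which is \eqref{eq:Dphi-1}. The only mildly delicate step is the uniqueness of the implicit root, which rests entirely on the sign of $\de_r g$ computed above; everything else is a routine application of the implicit function theorem together with elementary inequalities for unit vectors.
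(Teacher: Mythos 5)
Your proof is correct, and its skeleton coincides with the paper's: you implicitize the same function $F(x,r)=|x-V_r|^{2}-r^{2}$, get existence of the root by the intermediate value theorem, and obtain $C^{1}$ regularity and the explicit formula $\nabla\phi(x)=\big(x-V_{\phi(x)}\big)/\big(\phi(x)-(x_n+v(\phi(x)))v'(\phi(x))\big)$ from the implicit function theorem, using exactly the bound $\de_rF\le -r$ on the zero set; \eqref{eq:gradphi} then follows since the scalar factor is positive. You deviate from the paper in two places, both legitimately. For uniqueness of the root, the paper argues directly via $|r-r'|=\big||x-V_r|-|x-V_{r'}|\big|\le|v(r)-v(r')|\le\tfrac12|r-r'|$, which is shorter than your crossing argument based on the sign of $\de_rF$ at zeros, though your argument is sound (the minimal-zero/next-zero reasoning and the one-sided derivative inequality at $r_2$ are fine). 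For the estimate \eqref{eq:Dphi-1}, the paper expands $|\nabla\phi(x)-x/|x||^{2}$ exactly and bounds it by $16\,(v'(r)+v(r)/r)$, whereas you insert the intermediate unit vector $\omega=(x-V_r)/r$, bound $|\nabla\phi-\omega|\le 2v'(r)$ and $|\omega-x/|x||\le 2v(r)/r$ (your two-point unit-vector inequality with the maximum of the norms is correct), and then use $2\tau\le 2\sqrt{\tau}$ for $\tau=v'(r)+v(r)/r\in[0,1]$, which is guaranteed by (V1) since $v(r)\le r/2$. This route is cleaner, avoids the algebraic expansion, and actually yields the constant $2$ in place of $4$, so it proves a slightly stronger inequality than stated; no gap.
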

\begin{proof}
If $v(r)$ is identically $0$, then there is nothing to prove because $\phi(x) = |x|$ in this case. We then suppose $v \neq 0$. Let us start by proving the existence of the function $\phi$. We observe that, for any $x \in B_{R}(V_R)$, there exists a unique $r = r_{x} \in [0,R)$ such that $x \in \de B_{r}(V_r)$. Indeed, if $x = 0$, we can take $r = 0$. Otherwise, let $F: (B_{R}(V_R) \setminus \{ 0 \}) \times (0,R) \rightarrow \R$ be the function defined by
\begin{equation} \label{eq:defF}
F(x,r) = |x - V_r|^{2} - r^{2} \, .
\end{equation}
It is immediate to observe that $F$ is continuous. Moreover,
\begin{equation*}
F(x,0) = |x|^{2} > 0 \quad\text{and}\quad F(x,R) < 0 \, , \qquad \text{for all } x \in B_{R}(V_R) \setminus \{ 0 \} \, .
\end{equation*}
Hence we can find $r \in (0,R)$ such that $F(x,r) = 0$, i.e. such that $x \in \de B_{r}(V_r)$. Let us show the uniqueness. Indeed, if $r$, $r' \in (0,R)$ have the property that
\begin{equation*}
x \in \de B_{r}(V_r) \cap \de B_{r'}(V_{r'}) \, ,
\end{equation*}
then we get
\begin{equation*}
|r - r'| = ||x - V_r| - |x - V_{r'}|| \leq |V_r - V_{r'}| = |v(r) - v(r')| \leq \dfrac{1}{2} \, |r - r'| \, ,
\end{equation*}
thus we must have $r = r'$. Now we can define $\phi(x) = r_{x}$. Let us show that $\phi \in \cC^{1}(B_{R}(R) \setminus \{ 0 \})$. To do so, we note that $\phi$ is implicitly defined by
\begin{equation*} 
F(x, \phi(x)) = 0 \, ,
\end{equation*}
where $F$ is the function defined in \eqref{eq:defF}. Easy computations give
\begin{equation*}
\de_{r} F(x,r) = -2r + 2 v'(r)(x_{n} + v(r)) \, .
\end{equation*}
Therefore, if we assume $F(x,r) = 0$ (that is, $x \in B_{r}(V_r)$) we obtain
\begin{equation*}
\de_{r} F (x,r) = -2r + 2 v'(r)(x_{n} + v(r)) \leq 2r + |x_{n} + v(r)| \leq -r \, ,
\end{equation*}
where the first inequality follows from the assumption $0 \leq v' \leq 2^{-1}$, while the second inequality from
\begin{equation*}
|x_{n} + v(r)| = |\langle x - V_r , e_{n} \rangle| \leq |x - V_r| = r \, .
\end{equation*}
By the Implicit Function Theorem we deduce that $\phi \in \cC^{1}(B_{R}(V_R) \setminus \{ 0 \})$. The identity \eqref{eq:gradphi} is a consequence of the fact that, if $\phi(x) = r$, then the vector $\nabla \phi(x)$ is orthogonal to the level set $\de B_{r}(V_r) = \phi^{-1}(r)$ at $x$. 

Let us now prove \eqref{eq:Dphi-1}. We first observe that, if $\phi(x) = r > 0$, then
\begin{equation} \label{eq:nablaphi}
\nabla \phi (x) = - \dfrac{\de_{x} F (x,r)}{\de_{r} F (x,r)} = \dfrac{x + v(r) e_{n}}{r - v'(r)(x_{n} + v(r))}
\end{equation}
Then \eqref{eq:nablaphi} yields
\begin{align} \label{eq:est1}
\left| \nabla \phi(x) - \dfrac{x}{|x|} \right|^{2} & = \dfrac{|x + v e_{n}|^{2}}{(r - v'(x_{n} + v))^{2}} + 1 - 2 \, \left \langle \dfrac{x + v e_{n}}{r - v'(x_{n} + v)} , \dfrac{x}{|x|} \right \rangle \nonumber \\
& = \dfrac{|x + v e_{n}|^{2}}{(r - v'(x_{n} + v))^{2}} + 1 - 2 \, \dfrac{|x + v e_{n}|^{2} - v(x_{n} + v)}{(r - v'(x_{n} + v))|x|} \nonumber \\
& = \dfrac{|x| \, r^{2} + |x|(r - v'(x_{n} + v))^{2} - 2(r - v'(x_{n} + v))[r^{2} - v(x_{n} + v)]}{|x|(r - v'(x_{n} + v))^{2}} \nonumber\\
& = \dfrac{\frac{|x|}{r} + \frac{|x|}{r} \left( 1 - \frac{v'(x_{n} + v)}{r} \right)^{2} - 2 \, \left( 1 - \frac{v'(x_{n} + v)}{r} \right) \left( 1 - \frac{v}{r} \, \frac{x_{n} + v}{r} \right)}{\frac{|x|}{r} \left( 1 - \frac{v'(x_{n} + v)}{r} \right)^{2} } \nonumber \\
& = \dfrac{1 +  \left( 1 - \frac{v'(x_{n} + v)}{r} \right)^{2} - 2 \, \dfrac{r}{|x|} \left( 1 - \frac{v'(x_{n} + v)}{r} \right) \left( 1 - \frac{v}{r} \, \frac{x_{n} + v}{r} \right)}{\left( 1 - \frac{v'(x_{n} + v)}{r} \right)^{2} } \, , 
\end{align}
where the short forms $v=v(r)$ and $v'=v'(r)$ have been used.
Next we observe that $\big||x| - r\big| = \big|x - |x - V_r|\big|$, hence
\begin{equation}\label{eq:est2}
r-v(r)\le |x| \le r+ v(r)\,.
\end{equation}
Exploiting \eqref{eq:est2} in \eqref{eq:est1} and recalling that $v'(r) \le 1/2$ and $|x_{n}+v(r)|\le r$, we get
\begin{align*}
\left| \nabla \phi(x) - \dfrac{x}{|x|} \right|^{2} &\leq \dfrac{ 2 - 2 \, \left( 1 - \frac{v(r)}{r+v(r)} \right) ( 1 - v'(r)) \left( 1 - \frac{v(r)}{r}\right)}{(1 - v'(r))^{2} }\\
&\le 8\left(1-\left(1-\frac{v(r)}{r}\right)^{2}(1-v'(r))\right)\\
&\le 16\left(v'(r) + \frac{v(r)}{r}\right)\,,
\end{align*}
and this concludes the proof.
\end{proof}

\subsection{Some examples} \label{sec:examples}
We note that there are examples of sets with Lipschitz boundary that do not satisfy the visibility property at $0$, like for instance the epigraph of the function 
\[
\omega(x) = \begin{cases}
x^{2}\sin(|x|^{-1}) & \text{if }x\neq 0\\
0 & \text{otherwise.}
\end{cases}
\]
Indeed, one can easily check that at $x_{k} = \frac{1}{(2k+1)\pi}$ for $k\in \N$ we have $\omega(x_{k}) = 0$ and $\omega'(x_{k})=1$, hence any visibility function $u$ for which (V3) holds must satisfy $u(x_{k})\ge x_{k}$, which contradicts both  (V1) and (V2) (see Remark \ref{rem:V2strong}).

In the following, we exhibit some examples of domains for which the visibility holds. We recall that $\omega_{\nu}(s) = \omega(s \nu)$ for $s \geq 0$.

\begin{example}[Lipschitz cones and outer star-shaped sets] \label{ex:Lipcones}
Let $\Omega$ be either a cone with respect to $0$, or such that its complement $\R^{n}\setminus \Omega$ is locally star-shaped with respect to $0$. It is immediate to check that $\Omega$ satisfies the visibility property with visibility function $u(t) \equiv 0$.
\end{example}
	
\begin{example}[$C^{1,\beta}$-sets]
Let $\Omega$ have $C^{1,\beta}$ boundary and assume $0\in \de\Omega$. We show that $\Omega$ satisfies the visibility property. Up to a rotation we can assume that $\Omega$ admits a representation as in \eqref{eq:omegagraph} with $\omega \in C^{1,\beta}(B'_{\rho})$. By Corollary \ref{cor:gradomega}, it is enough to show that $\omega$ satisfies \eqref{eq:omegavis}. Since $\nabla \omega$ is $\beta$-H\"older, we have
\begin{equation} \label{eq:Holder1}
\langle x' , \nabla \omega(x') \rangle \leq \langle x' , \nabla \omega (0) \rangle + u(|x'|) \, .
\end{equation}
where $u(t) = C \, t^{1 + \beta}$ for some $C > 0$. Set $\overline{\omega}(x') := \omega(x') - \langle \nabla \omega (0) , x' \rangle$ and note that $\overline{\omega}$ is $C^{1,\beta}$, $\overline{\omega}(0) = 0$, $\nabla \overline{\omega} (0) = 0$, and
		\begin{align} \label{eq:Holder2}
			|\omega(x') - \langle x' , \nabla \omega (0) \rangle| & \leq \max_{|y'| \leq |x'|} |\nabla \overline{\omega} (y')||x'| \nonumber \\
			& \leq (|\nabla \overline{\omega} (0)| + C |x|^{\beta})|x'| \\
			& = C \, |x'|^{1 + \beta} \, . \nonumber
		\end{align}
		Putting together \eqref{eq:Holder1} and \eqref{eq:Holder2}, we finally get
		\begin{equation*}
			\langle x' , \nabla \omega (x') \rangle \leq \omega(x') + u(|x'|), \, \qquad \text{ for a.e. } x' \in B'_{\rho} \, ,
		\end{equation*}
		which is precisely \eqref{eq:omegavis}. Since trivially $u$ is non-decreasing and satisfies (V2), by Corollary \ref{cor:gradomega} we infer that $\Omega$ satisfies the visibility property.
\end{example}

\begin{example}[Convex sets satisfying (V2)] \label{ex:convex}
Let $\Omega$ be a convex set with $0 \in \de \Omega$. For $s > 0$, let $\Omega_{s} := s^{-1} \Omega$ and $\Omega_{0} := \bigcup_{s > 0} \Omega_{s}$. The set $\Omega_{0}$ is the tangent cone to $\Omega$ at $0$. Let
\begin{equation} \label{eq:Hdistcontrol}
u(r) := \Hdist(\Omega \cap B_{r}, \Omega_{0} \cap B_{r}) \, ,
\end{equation}
and assume that $u(r)$ satisfies (V2). We observe that $u$ is non-decreasing in $r$. Let us prove that $\Omega$ satisfies the visibility property. As before, we assume that $\Omega$ admits a graphical representation as in \eqref{eq:omegagraph} with the further property that $\omega: B'_{\rho} \rightarrow \R$ is convex. Using the notations introduced in the proof of Proposition \ref{prop:tangcone}, by the convexity of $\omega$, we can define 
		\begin{equation*}
			\omega_{0}(x') = 
			\begin{cases}
				|x'| D_{\frac{x'}{|x'|}}^{+} \omega (0) & \text{if } x' \neq 0,\\ 
				0 &  \text{if } x' = 0 \, ,
			\end{cases}
		\end{equation*}
		and deduce that
		\begin{equation*}
			\Omega_{0} = \{ x \in \R^{n} \, : \, x_{n} > \omega_{0}(x') \} \, .
		\end{equation*}
		By the definition of $u$ given in \eqref{eq:Hdistcontrol}, we have
		\begin{equation} \label{eq:Hdistcontrolomega}
			||\omega - \omega_0||_{L^{\infty}(B'_r)} \leq C u(r) \, , \qquad \text{for some $C > 0 \, .$} 
		\end{equation}
		Owing to Corollary \ref{cor:gradomega}, the visibility property can be proved by showing that \eqref{eq:omegavis} holds. Thanks to the convexity of $\omega$, for all $\nu \in \de B'_{1}$, we have
		\begin{equation} \label{eq:estdir1}
			D^{+}_{\nu} \omega (0) \leq \omega'_\nu \left( \frac{s}{2}^+ \right) := \lim_{\sigma \rightarrow 0^+} \dfrac{\omega_\nu(s/2 + \sigma) - \omega_\nu(s/2)}{\sigma} \, , \qquad \text{for all $0 \leq s < \rho \, .$}
		\end{equation}
		Moreover, for all $0 < \sigma < s/2$, by \eqref{eq:Hdistcontrolomega} and the convexity of $\omega_\nu$, we have
		\begin{align} \label{eq:estdir2}
			\dfrac{\omega_\nu(s/2 + \sigma) - \omega_\nu(s/2)}{\sigma} & \leq \dfrac{\omega_\nu (s) - \omega_\nu(s/2)}{s/2} \nonumber \\
			& = 2 \, \dfrac{\omega_\nu(s)}{s} - \dfrac{\omega_\nu(s/2)}{s/2} \nonumber\\
			& = 2 \, \dfrac{\omega_\nu(s)}{s} - 2 D^{+}_{\nu} \omega (0) + D^{+}_{\nu} \omega (0) - \dfrac{\omega_\nu(s/2)}{s/2} + D^{+}_{\nu} \omega (0) \\
			& = \dfrac{2}{s} \, \left(\omega_{\nu}(s) - \omega_0(s \nu) + \omega_0(s \nu / 2) - \omega_{\nu}(s/2) \right) + D^{+}_{\nu} \omega (0) \nonumber \\
			& \leq D^{+}_{\nu} \omega (0) + \tilde{C} \, \dfrac{u(s)}{s} \, , \qquad \text{for some $\tilde{C} > 0 \, .$} \nonumber
		\end{align}
		Putting together \eqref{eq:estdir1} and \eqref{eq:estdir2}, we obtain
		\begin{equation*}
			\left| D^{+}_{\nu} \omega (0) - \omega'_\nu \left( \frac{s}{2}^+ \right) \right| \leq \tilde{C} \, \dfrac{u(s)}{s} \, . 
		\end{equation*}
		This suffices for us to conclude. In fact, if $x' \in B'_{\rho} \setminus \{ 0 \}$ is such that $\omega$ is differentiable at $x'$, setting $s := |x'|$, $\nu := x'/|x'|$, for some $C > 0$, we achieve
		\begin{align*}
			\langle x' , \nabla \omega (x') \rangle & = \langle s \nu , \nabla \omega(s \nu) \rangle = s \, \omega'_\nu (s^+) \\
			& \leq s \, D^{+}_{\nu} \omega (0) + C \, u(s) \\
			& \leq \omega(s \nu) + C \, u(s) = \omega(x') + C \, u(|x'|) \, .
		\end{align*}
		Since $\omega$ is convex, it is differentiable almost everywhere in $B'_{\rho}$, and so \eqref{eq:omegavis} is verified. Moreover, $u$ is non-decreasing and satisfies (V2) by our assumption, thus we conclude.
\end{example}

\begin{example}
	For $i \in \N$, let $y_i = 2^{-i}$ and $\tilde{y}_i = 2^{-i} + 4^{-i}$.
    We observe that $y_{i+1} < \tilde{y}_{i+1} < y_i$. We define $P_i = (y_i, \tilde{y}_i)$, $Q_i = (\tilde{y}_i, \tilde{y}_i)$, and consider the polygonal curve formed by the segments $\overline{P_i Q_{i+1}}$, $\overline{Q_{i+1} P_{i+1}}$, for all $i \geq 0$. It is immediate to observe that this curve coincides with the graph of the function $\omega: [0,1] \ra \R$ defined by $\omega(0)=0$ and 
    \begin{equation} \label{eq:defomega}
    	\omega(y) =
    	\begin{cases}
    		2^{-(i+1)} + 4^{-(i + 1)}  & \text{if } y \in (y_{i+1}, \tilde{y}_{i+1}],\\ 
    		a_i \, y - b_i &  \text{if } y \in (\tilde{y}_{i+1}, y_i] \, ,
    	\end{cases}
    \end{equation}
where
    \begin{equation*}
    	a_i = \dfrac{1 + 3 \cdot 2^{-(i+1)}}{1 - 2^{-(i + 1)}} \, , \qquad b_i = \dfrac{4^{-i} + 2^{-(3i + 1)}}{1 - 2^{-(i+1)}} \, .
    \end{equation*}
    \begin{figure}[ht!]
    \centering
    \includegraphics{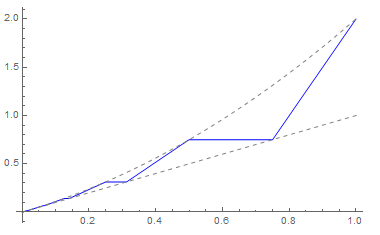}
    \, \, \, \, \, \, \caption{The graph of $\omega(y)$ "bounces" between the graphs of $y$ and $y + y^2$.}\label{tab:graphomega}
    \end{figure}
    Let $\Omega \subset \R^2$ be an open set such that
    \begin{equation*}
    	\Omega \cap ((-1,1) \times \R) = \{ x = (y,z) \in \R^2 \, : \, z > \omega(|y|) \} \, .
    \end{equation*}
    Let us show that $\Omega$ satisfies the visibility condition at $0$. Owing to Corollary \ref{cor:gradomega}, it suffices to show the existence of a non-decreasing function $u : (0,1) \ra \R$ satisfying (V1), (V2) and such that
    \begin{equation} \label{eq:omegavisex}
    	y \, \omega'(y) \leq \omega(y) + u(y) \, , \qquad \text{ for a.e. } y \in (0,1) \, .
    \end{equation}
By \eqref{eq:defomega}, for every $i \in \N$, we have
\begin{equation*}
	y \, \omega'(y) =
	\begin{cases}
		0 & \text{if } y \in (y_{i+1}, \tilde{y}_{i+1}],\\ 
		a_i \, y &  \text{if } y \in (\tilde{y}_{i+1}, y_i] \, .
	\end{cases}
\end{equation*}
Thus, in order to verify \eqref{eq:omegavisex}, it suffices to choose a function $u = u(y)$ greater or equal than the function
\begin{equation*}
	\overline{u}(y) =
	\begin{cases}
		0 & \text{if } y \in (y_{i+1}, \tilde{y}_{i+1}],\\ 
	    b_i &  \text{if } y \in (\tilde{y}_{i+1}, y_i] \, .
	\end{cases}
\end{equation*}
We look for $\alpha > 0$ such that
\begin{equation} \label{eq:ualphageq}
	u_\alpha(y) = \alpha y^2 \geq \overline{u}(y) \, .
\end{equation}
To obtain \eqref{eq:ualphageq}, it suffices to impose that
\begin{equation*}
	u_\alpha(\tilde{y}_{i+1}) = \alpha \, \tilde{y}_{i+1}^2 \geq b_i \, .
\end{equation*}
It is immediate to observe that
\begin{equation*}
	\alpha \, \tilde{y}_{n+1}^2 \geq \dfrac{\alpha}{4} \, 2^{-2i} \, , \qquad b_i \leq 4 \cdot 2^{-2i} \, ,
\end{equation*}
hence, if for instance we take $\alpha = 16$, \eqref{eq:ualphageq} holds. Since the function $u(y) = 16 \, y^2$ fulfills (V1) and (V2), we conclude that $\Omega$ satisfies the visibility condition at $0$.
\end{example}

\section{Free-boundary monotonicity}
\label{sec:BMF}
The present section is devoted to showing a free-boundary monotonicity property for local almost-minimizers of $\pr$ at a point $x_0\in\de\Omega$ satisfying the visibility property up to an isometry (hence, from now on, we will directly assume $x_{0}=0$). 
Following the notation of the previous section, given $0 < r_{1} < r_{2} < R$, we recall that $V_{r}=-v(r)e_{n}$ and define
\begin{equation*}
\cA_{r_{1},r_{2}} := B_{r_{2}}(V_{r_{2}}) \setminus \overline{B_{r_{1}}(V_{r_{1}})} = \phi^{-1}(r_{1},r_{2})\,,
\end{equation*}
where $\phi(x)$ is the function defined in Lemma \ref{lem:coarea}. We also conveniently introduce some further notation. Given $f \in BV_{\loc}(\Omega)$ and $0<r_{1}<r_{2}$, we set
\[
\mu_{f}(r) = \dfrac{|Df|(\Omega \cap B_{r}(V_{r}))}{r^{n-1}}
\]
and
\begin{align} \label{eq:errtermfBV}
G(f;r_1, r_2) &= \int_{r_1}^{r_2} \dfrac{n-1}{\rho^{n}} \, \int_{\Omega \cap B_{\rho}(V_{\rho})} (|\nabla \phi| - 1) \, d |Df| \, d \rho \\ 
& \qquad + \dfrac{1}{r_{2}^{n-1}} \, \int_{\Omega \cap B_{r_{2}}(V_{r_2})} (|\nabla \phi| - 1) \, d|Df| - \dfrac{1}{r_{1}^{n-1}} \, \int_{\Omega \cap B_{r_1}(V_{r_1})} (|\nabla \phi| - 1) \, d|Df|  \, . \nonumber
\end{align}
In the next proposition, we combine the visibility property with an upper bound on $\mu_{f}(r)$ and obtain the finiteness of $\lim_{\rho\to 0} G(f;\rho,r)$.
\begin{proposition}\label{prop:Gbound1}
Let $\Omega \subset \R^{n}$ satisfy the visibility property as in Definition \ref{def:visib}, and let $f \in BV_{\loc}(\Omega)$. Assume that $\mu_{f}(r)\le C$ for some constant $C>0$ and for all $r\in (0,R)$. Then for $r\in (0,R)$ the limit
\begin{align*}
G(f;r) &:= \lim_{\rho\to 0} G(f;\rho,r)\\\nonumber
&= \int_{0}^{r} \dfrac{n-1}{\rho^{n}} \, \int_{\Omega \cap B_{\rho}(V_{\rho})} (|\nabla \phi| - 1) \, d |Df| \, d \rho  + \dfrac{1}{r^{n-1}} \, \int_{\Omega \cap B_{r}(V_{r})} (|\nabla \phi| - 1) \, d|Df|  
\end{align*}
exists and is finite.
\end{proposition}
\begin{proof}
By \eqref{eq:Dphi-1}, for all $x\in B_{\rho}(V_{\rho})$ we have
\begin{align*}
\left||\nabla \phi(x)| - 1\right| &\le \left|\nabla \phi(x) - \frac{x}{|x|}\right|\le 4\sqrt{\frac{v(\phi(x))}{\phi(x)} + v'(\phi(x))} \le 4\sup_{0<r\le \rho} \sqrt{\frac{v(r)}{r} + v'(r)} = 4\rho \gamma_{v}(\rho)\,,
\end{align*}
where $\gamma_{v}$ is the function defined in the visibility property (V2). Then, using the upper bound on $\mu_{f}$, for $0<\rho<r<R$ we obtain
\begin{align*}
\left|\rho^{1-n} \int_{\Omega\cap B_{\rho}(V_{\rho})} (|\nabla \phi|-1)\, d|Df|\right| \le 4C\rho \gamma_{v}(\rho)\,.
\end{align*}
Thanks to the summability of $\gamma_{v}$ (see property (V2) of the visibility property) and to \eqref{eq:tgammazero}, from the last inequality we easily get the proof of the proposition. 
\end{proof}
Our monotonicity formula will then follow from the general inequality proved in the next theorem.
\begin{theorem}[Monotonicity inequality] \label{thm:MonotonicityInequality}
Let $\Omega \subset \R^{n}$ satisfy the visibility property, and let $f \in BV_{\loc}(\Omega)$. Then, for $R > 0$ small enough, for almost every $0 < r_{1} < r_{2} < R$, we have
\begin{align} \label{eq:finalmonotfBV}
& \left( \int_{\Omega \cap \cA_{r_{1},r_{2}}} \phi^{1-n} \left|\left\langle \nu_f, \nabla \phi \right\rangle \right| \, d |Df| \right)^{2} \leq 2\left( \int_{\Omega \cap \cA_{r_{1},r_{2}}} \frac{|\nabla \phi(x)|}{\phi(x)^{n-1}}\, d |Df| \right)\nonumber \\
& \qquad \qquad \qquad \cdot \bigg[\mu_{f}(r_{2}) - \mu_{f}(r_{1}) + \int_{r_{1}}^{r_{2}} \dfrac{n-1}{\rho^{n}} \, \Psi_{\Omega}(f;B_{\rho}(V_{\rho})) d \rho + G(f;r_1,r_{2}) \bigg] \, ,
\end{align}
where $\nu_f$ is such that $Df = \nu_{f}|Df|$.
\end{theorem}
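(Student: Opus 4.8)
\emph{Proof strategy.} The plan is to establish \eqref{eq:finalmonotfBV} first for $f\in C^{\infty}(\Omega)\cap BV_{\loc}(\Omega)$, where $d|Df|=|\nabla f|\,dx$ and $\nu_f=\nabla f/|\nabla f|$ on $\{\nabla f\neq 0\}$, and then to recover the general case by the strict approximation of Theorem~\ref{thm:AnzGiaq}. First I would fix $R$ small enough that, by \eqref{eq:Dphi-1} together with $v(t)/t+v'(t)\to 0$ as $t\to 0$ (a consequence of (V1) and $v\in C^{1}$), one has $\tfrac12\le|\nabla\phi|\le\tfrac32$ on $B_{R}(V_{R})\setminus\{0\}$, so that $|\nabla\phi|$ is a bounded, non-degenerate weight on the annular regions we work in.

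For smooth $f$, set $m(r):=|Df|(\Omega\cap B_{r}(V_{r}))$ and $h(r):=\int_{\Omega\cap\{\phi<r\}}|\nabla f|\,|\nabla\phi|\,dx$. The coarea formula (Theorem~\ref{thm:coareaLip}) applied to $\phi$ gives $h(r)=\int_{0}^{r}\big(\int_{\Omega\cap\de B_{\rho}(V_{\rho})}|\nabla f|\,d\Hau^{n-1}\big)\,d\rho$, and the same formula applied to the (signed, but bounded) factor $|\nabla\phi|-1$ shows that $\widetilde G(r):=h(r)-m(r)=\int_{\Omega\cap B_{r}(V_{r})}(|\nabla\phi|-1)\,d|Df|$ is also absolutely continuous on compact subintervals of $(0,R)$; hence so is $m=h-\widetilde G$, with
\[
m'(r)=\int_{\Omega\cap\de B_{r}(V_{r})}|\nabla f|\,d\Hau^{n-1}-\widetilde G'(r)\qquad\text{for a.e.\ }r.
\]
On $\de B_{r}(V_{r})$ I decompose $\nabla f$ into its component along $\nabla\phi/|\nabla\phi|$ (normal to the level set) and its tangential part $\nabla^{T}f$; the elementary inequality $\sqrt{a^{2}+b^{2}}\ge b+a^{2}/(2\sqrt{a^{2}+b^{2}})$ then yields the pointwise bound $|\nabla f|\ge|\nabla^{T}f|+\frac{|\nabla f|}{2|\nabla\phi|^{2}}\langle\nu_f,\nabla\phi\rangle^{2}$. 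The second ingredient is a conical competitor: for a.e.\ $r$ the inner and outer traces of $f$ on $\de B_{r}(V_{r})$ coincide with $f$ (Lemma~\ref{lem:faetrace}) and $|Df|(\de B_{r}(V_{r}))=0$, and by Lemma~\ref{lem:Crcontains} the cone $\C_{r}$ over $\Omega\cap\de B_{r}(V_{r})$ with vertex $V_{r}$ contains $\Omega\cap B_{r}(V_{r})$; thus one may define $g_{r}$ on $\Omega\cap B_{r}(V_{r})$ to be the $0$-homogeneous extension (with respect to $V_{r}$) of $f|_{\de B_{r}(V_{r})}$, glue it with $f$ outside $B_{\sigma}(V_{\sigma})$ for $\sigma\uparrow r$ to obtain an admissible competitor, and compute in polar coordinates centred at $V_{r}$ that $|Dg_{r}|(\Omega\cap B_{r}(V_{r}))\le\frac{r}{n-1}\int_{\Omega\cap\de B_{r}(V_{r})}|\nabla^{T}f|\,d\Hau^{n-1}$. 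Since $|Dg_{r}|(\Omega\cap B_{r}(V_{r}))\ge m(r)-\Psi_{\Omega}(f;B_{r}(V_{r}))$ by definition of the minimality gap, this gives $\int_{\Omega\cap\de B_{r}(V_{r})}|\nabla^{T}f|\,d\Hau^{n-1}\ge\frac{n-1}{r}\big(m(r)-\Psi_{\Omega}(f;B_{r}(V_{r}))\big)$.

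Putting the three facts together, for a.e.\ $r\in(0,R)$,
\[
\mu_f'(r)=\frac{m'(r)}{r^{n-1}}-\frac{(n-1)m(r)}{r^{n}}\ge-\frac{(n-1)\Psi_{\Omega}(f;B_{r}(V_{r}))}{r^{n}}-\frac{\widetilde G'(r)}{r^{n-1}}+\frac{1}{r^{n-1}}\int_{\Omega\cap\de B_{r}(V_{r})}\frac{|\nabla f|\,\langle\nu_f,\nabla\phi\rangle^{2}}{2|\nabla\phi|^{2}}\,d\Hau^{n-1}.
\]
Integrating over $(r_{1},r_{2})$, an integration by parts identifies $\int_{r_{1}}^{r_{2}}\widetilde G'(\rho)\rho^{1-n}\,d\rho$ with $G(f;r_{1},r_{2})$ as defined in \eqref{eq:errtermfBV}, and a final application of the coarea formula (using $\phi\equiv\rho$ on $\de B_{\rho}(V_{\rho})$, together with $d|Df|=|\nabla f|\,dx$) turns the accumulated last term into $\int_{\Omega\cap\cA_{r_{1},r_{2}}}\phi^{1-n}\frac{\langle\nu_f,\nabla\phi\rangle^{2}}{2|\nabla\phi|}\,d|Df|$. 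What remains is the inequality $\int_{\Omega\cap\cA_{r_{1},r_{2}}}\phi^{1-n}\frac{\langle\nu_f,\nabla\phi\rangle^{2}}{2|\nabla\phi|}\,d|Df|\le\mathcal R$, where $\mathcal R$ is precisely the bracket on the right of \eqref{eq:finalmonotfBV}; writing $\phi^{1-n}|\langle\nu_f,\nabla\phi\rangle|=\big(\phi^{1-n}\tfrac{\langle\nu_f,\nabla\phi\rangle^{2}}{2|\nabla\phi|}\big)^{1/2}\big(2\phi^{1-n}|\nabla\phi|\big)^{1/2}$ and applying the Cauchy--Schwarz inequality with respect to $|Df|$ on $\Omega\cap\cA_{r_{1},r_{2}}$ then delivers \eqref{eq:finalmonotfBV} for smooth $f$.

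For a general $f\in BV_{\loc}(\Omega)$ I would fix $0<r_{1}<r_{2}<R$ with $\Hau^{n-1}(\de B_{\rho}(V_{\rho})\cap\de\Omega)=0$ for a.e.\ $\rho\in(r_{1},r_{2})$, take the approximating sequence $f_{j}\to f$ of Theorem~\ref{thm:AnzGiaq} on a bounded neighbourhood of $\overline{B_{R}(V_{R})}$ in $\Omega$ (whose boundary traces on $\de\Omega$ agree with those of $f$, by Remark~\ref{rem:contrace}), and use Proposition~\ref{prop:convBVboost} (with $A_{\rho}=\{\phi<\rho\}$) to get, for a.e.\ $\rho$, $\mu_{f_{j}}(\rho)\to\mu_{f}(\rho)$, $|Df_{j}|(\Omega\cap B_{\rho}(V_{\rho}))\to|Df|(\Omega\cap B_{\rho}(V_{\rho}))$, and strict convergence on $\Omega\cap A_{\rho}$; Lemma~\ref{lem:upbmingap} then yields $\Psi_{\Omega}(f_{j};B_{\rho}(V_{\rho}))\to\Psi_{\Omega}(f;B_{\rho}(V_{\rho}))$. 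Consequently the right-hand side of \eqref{eq:finalmonotfBV} for $f_{j}$ converges to the one for $f$ (dominated convergence in $\rho$, with $\Psi_{\Omega}(f_{j};B_{\rho}(V_{\rho}))\le|Df_{j}|(\Omega\cap B_{r_{2}}(V_{r_{2}}))$ bounded, plus $|Df_{j}|\restrict(\Omega\cap\cA_{r_{1},r_{2}})\weakto^{\ast}|Df|\restrict(\Omega\cap\cA_{r_{1},r_{2}})$ with mass convergence against the continuous weight $|\nabla\phi|\,\phi^{1-n}$), while the left-hand side is lower semicontinuous, being the total variation over the open set $\Omega\cap\cA_{r_{1},r_{2}}$ of the scalar measures $\phi^{1-n}(\nabla\phi\cdot Df_{j})$, which converge weakly-$\ast$ to $\phi^{1-n}(\nabla\phi\cdot Df)$ (recall \eqref{eq:totvarumu}). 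I expect the hard part to be the conical competitor step: making $g_{r}$ genuinely admissible in the definition of $\Psi_{\Omega}(f;B_{r}(V_{r}))$ --- in particular ruling out a spurious jump of $|Dg_{r}|$ across $\de B_{r}(V_{r})$, which is exactly where the a.e.\ coincidence of inner and outer traces is needed --- and computing $|Dg_{r}|$ correctly in the off-centric polar coordinates around the moving vertex $V_{r}$, which is the precise point at which the visibility property, through Lemma~\ref{lem:Crcontains}, enters.
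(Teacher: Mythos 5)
Your proposal is correct and follows essentially the same route as the paper: the off-centric conical competitor $f\big(V_r + r\,(x-V_r)/|x-V_r|\big)$ made admissible thanks to Lemma \ref{lem:Crcontains} and the trace identification of Lemma \ref{lem:faetrace}, the minimality-gap comparison, the elementary pointwise splitting (the paper's $\sqrt{1-t}\le 1-t/2$), the coarea formula for $\phi$, Cauchy--Schwarz, and then the identical strict-approximation scheme for general $BV_{\loc}$ functions via Theorem \ref{thm:AnzGiaq}, Proposition \ref{prop:convBVboost}, Lemma \ref{lem:upbmingap} and weak-$\ast$ lower semicontinuity of $\phi^{1-n}\nabla\phi\cdot Df_j$. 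The only differences are cosmetic bookkeeping: you state the key estimate as a differential inequality for $\mu_f$ and recover $G(f;r_1,r_2)$ by an integration by parts of $\int \widetilde G'(\rho)\rho^{1-n}\,d\rho$, whereas the paper recognizes the same quantity directly as a total derivative in \eqref{eq:Monotcoarea} before integrating.
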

\begin{proof}
We start by assuming $f \in BV(\Omega) \cap C^{1}(\Omega)$. For all $0 < r < R$ and $x \in \C_{r} \cap B_{r}(V_{r})$, where $\C_{r}$ is defined in \eqref{eq:Cr}, we let
\begin{equation*}
Y_{r}(x) = V_r + r \, \dfrac{x - V_r}{|x - V_r|} \, .
\end{equation*}
Standard computations yield
\begin{align} \label{eq:DYr}
DY_{r}(x) & = r \, \left[ \dfrac{1}{|x - V_r|} D(x - V_r) + (x - V_r) \otimes \nabla |x - V_r|^{-1} \right] \\
& = \dfrac{r}{|x - V_r|} \, \left[ \Id - \dfrac{x - V_r}{|x - V_r|} \otimes \dfrac{x - V_r}{|x - V_r|} \right] \, . \nonumber
\end{align}
We define
\begin{equation*}
g_{r}(x) = f(Y_{r}(x))\qquad \text{for all $x \in \C_{r} \, ,$}
\end{equation*}
then the ``off-centric conical competitor'' is
\begin{equation*} 
f_{r}(x) = 
\begin{cases}
g_{r}(x) & \text{if $x \in \C_{r} \cap B_{r}(V_r)$} \\
f(x) & \text{if $x \in \Omega \setminus B_{r}(V_r) \, .$}
\end{cases}
\end{equation*}
		By definition, $f_{r}$ coincides with $f$ in $\Omega \setminus B_{r}(V_r)$, hence we infer
		\begin{equation} \label{eq:mingap}
			\Psi_{\Omega}(f;B_{r}(V_r)) \geq |Df|(\Omega \cap B_{r}(V_r)) - |Df_{r}|(\Omega \cap B_{r}(V_r)) \, .
		\end{equation}
		Then, by \eqref{eq:mingap}, we deduce that
		\begin{align} \label{eq:estcomp}
			|Df|(\Omega \cap B_{r}(V_r)) - \Psi_{\Omega}(f;B_{r}(V_r)) & \leq |Df_{r}|(\Omega \cap B_{r}(V_r)) \nonumber \\
			& \leq |Df_{r}|(\C_{r} \cap B_{r}(V_r)) \\
			& = \int_{\C_{r} \cap B_{r}(V_r)} |\nabla g_{r} (x)| dx \, . \nonumber
		\end{align}
		Let us now compute the gradient of $g_{r}$. By \eqref{eq:DYr}, setting
		\begin{equation*}
			\nu_{r}(x) = \dfrac{x - V_r}{|x - V_r|} \, , \qquad Y_{r} = Y_{r}(x) \, , 
		\end{equation*}
		we obtain
		\begin{align*}
			\nabla g_{r} (x) = D Y_{r} \cdot \nabla f (Y_r)
			= \dfrac{r}{|x - V_r|} \, \nabla f(Y_{r})^{\nu_{r}(x)^\perp} \, ,
		\end{align*}
		where $\nabla f(Y_r)^{\nu_{r}(x)^\perp}$ denotes the projection of $\nabla f(Y_r)$ onto the hyperplane
		\begin{equation*}
			\nu_r(x)^{\perp} := \{ y \in \R^n \, : \, \langle \nu_r(x) , y \rangle = 0 \} \, .
		\end{equation*}
Going on with the computations, we obtain
\begin{align*}
|\nabla g_{r} (x)| & = \dfrac{r}{|x - V_r|} \, \sqrt[]{|\nabla f (Y_r)|^{2} - \langle \nabla f(Y_r) , \nu_r(x) \rangle^{2} } \\
& = \dfrac{r}{|x - V_r|} \, \left| \nabla f (Y_r) \right| \, \sqrt[]{1 - \dfrac{\left \langle \nabla f (Y_r), \nu_r(x) \right \rangle^{2}}{|\nabla f (Y_r)|^{2}}}  \, . 
\end{align*}
Consequently, we get
\begin{align} \label{eq:estintnormnablafr}
\int_{\C_r \cap B_{r}(V_r)} |\nabla g_r| dx & = \int_{0}^{r} \int_{\C_{r} \cap \de B_{\rho}(V_r)} \dfrac{r}{\rho} \, \left| \nabla f (Y_r) \right| \, \sqrt{1 - \dfrac{\left \langle \nabla f (Y_r), \nu_r \right \rangle^{2}}{|\nabla f (Y_r)|^{2}}} \, d \Hau^{n-1} \, d \rho \nonumber 
\\
			& = \int_{0}^{r} \left( \dfrac{\rho}{r} \right)^{n-2} d \rho \int_{\C_{r} \cap \de B_{r}(V_r)} \left| \nabla f  \right| \, \sqrt{1 - \dfrac{\left \langle \nabla f , \nu_r \right \rangle^{2}}{|\nabla f |^{2}}} \, d \Hau^{n-1} \nonumber \\
			& \leq \dfrac{r}{n-1} \, \Bigg\{ \int_{\Omega \cap \de B_{r}(V_r)} |\nabla f | d \Hau^{n-1} - \dfrac{1}{2} \, \int_{\Omega \cap \de B_{r}(V_r)} \dfrac{\left \langle \nabla f , \nu_r \right \rangle^{2}}{|\nabla f|} \, d \Hau^{n-1} \Bigg\} \, . 
\end{align}
		
		Combining \eqref{eq:estcomp} and \eqref{eq:estintnormnablafr}, we get
		\begin{align} \label{eq:MonotNOcoarea}
			& \dfrac{r}{2(n-1)} \, \int_{\Omega \cap \de B_{r}(V_r)} \dfrac{\left \langle \nabla f , \nu_r \right \rangle^{2}}{|\nabla f|} d \Hau^{n-1}\\
			& \qquad \leq \dfrac{r}{n-1} \, \int_{\Omega \cap \de B_{r}(V_r)} |\nabla f| d \Hau^{n-1} - \int_{\Omega \cap B_{r}(V_r)} |\nabla f| dx + \Psi_{\Omega}(f;B_{r}(V_r)) \, . \nonumber
		\end{align}
		Multiplying both sides of \eqref{eq:MonotNOcoarea} by $(n-1) r^{-n}$ and observing that $r = \phi(y)$ for any $y \in \de B_{r}(V_r)$, we get
\begin{align} \label{eq:Monotcoarea}
\dfrac{1}{2} \, \int_{\Omega \cap \de B_{r}(V_r)}& \left \langle \nabla f , \dfrac{\nabla \phi}{|\nabla \phi|} \right \rangle^{2} \dfrac{1}{|\nabla f | \, \phi^{n-1}} \, d \Hau^{n-1} \nonumber \\
			& \leq \dfrac{1}{r^{n-1}} \, \int_{\Omega \cap \de B_{r}(V_r)} |\nabla f | \, d \Hau^{n-1} + \dfrac{n-1}{r^{n}} \, \int_{\Omega \cap B_{r}(V_r)} |\nabla f| dx + \dfrac{n-1}{r^{n}} \, \Psi_{\Omega}(f;B_{r}(V_r)) \nonumber \\
			&= \dfrac{d}{dr} \left( \dfrac{1}{r^{n-1}} \, \int_{\Omega \cap B_{r}(V_r)} |\nabla f||\nabla \phi| dx \right) \\
			& \qquad\qquad + \dfrac{n-1}{r^{n}} \, \int_{\Omega \cap B_{r}(V_r)} |\nabla f |(|\nabla \phi| - 1) dx + \dfrac{n-1}{r^{n}} \, \Psi_{\Omega}(f;B_{r}(V_r)) \, . \nonumber
		\end{align}
		Let us now integrate \eqref{eq:Monotcoarea} between $0 < r_{1} < r_{2} < R$. We then achieve
		\begin{align} \label{eq:intMonotcoarea}
			& \dfrac{1}{2} \, \int_{\Omega \cap \cA_{r_{1},r_{2}}} \left \langle \nabla f(x) , \dfrac{\nabla \phi(x)}{|\nabla \phi(x)|} \right \rangle^{2} \dfrac{|\nabla \phi(x)|}{|\nabla f (x)| \, \phi(x)^{n-1}} \, dx \nonumber \\
			& \qquad \leq\ \dfrac{1}{r_{2}^{n-1}} \, \int_{\Omega \cap B_{r_{2}}(V_{r_2})} |\nabla f (x)||\nabla \phi(x)| dx - \dfrac{1}{r_{1}^{n-1}} \, \int_{\Omega \cap B_{r_{1}}(V_{r_1})} |\nabla f (x)||\nabla \phi(x)| dx \\
			&\qquad \qquad\qquad +\int_{r_{1}}^{r_{2}} \dfrac{n-1}{r^{n}} \, \int_{\Omega \cap B_{r}(V_r)} |\nabla f (x)|(|\nabla \phi(x)| - 1) dx \, dr + \int_{r_{1}}^{r_{2}} \dfrac{n-1}{r^{n}} \, \Psi_{\Omega}(f;B_{r}(V_r)) dr \, . \nonumber
		\end{align}
		By H\"older's Inequality, we get
		\begin{align} \label{eq:MonHold}
			&\left( \int_{\Omega \cap \cA_{r_{1},r_{2}}} \left| \left\langle \nabla f (x) , \nabla \phi (x) \right\rangle \right| \, \dfrac{dx}{\phi(x)^{n-1}} \right)^{2} \\ 
			& \qquad \leq \left( \int_{\Omega \cap \cA_{r_{1},r_{2}}} \frac{|\nabla \phi(x)|}{\phi(x)^{n-1}} |\nabla f (x)| dx \right) \cdot \left( \int_{\Omega \cap \cA_{r_{1},r_{2}}} \left\langle \nabla f (x) , \dfrac{\nabla \phi (x)}{|\nabla \phi (x)|} \right\rangle^{2} \, \dfrac{|\nabla \phi(x)|\,dx}{|\nabla f (x)| \phi(x)^{n-1}} \right) \, . \nonumber
		\end{align}
		Putting together \eqref{eq:intMonotcoarea} and \eqref{eq:MonHold}, we obtain 
		\begin{align} \label{eq:finalmonotf}
			& \left( \int_{\Omega \cap \cA_{r_{1},r_{2}}} \left| \left\langle \nabla f (x) , \nabla \phi (x) \right\rangle \right| \, \dfrac{dx}{\phi(x)^{n-1}} \right)^{2} \leq 2\left( \int_{\Omega \cap \cA_{r_{1},r_{2}}} \frac{|\nabla \phi(x)|}{\phi(x)^{n-1}} |\nabla f (x)| dx \right) \nonumber \\
			& \qquad \cdot \Bigg{(} \dfrac{1}{r_{2}^{n-1}} \, \int_{\Omega \cap B_{r_{2}}(V_{r_2})} |\nabla f (x)| dx - \dfrac{1}{r_{1}^{n-1}} \, \int_{\Omega \cap B_{r_{1}}(V_{r_1})} |\nabla f (x)| dx \\
			& \qquad \qquad+ \int_{r_{1}}^{r_{2}} \dfrac{n-1}{\rho^{n}} \, \Psi_{\Omega}(f;B_{\rho}(V_{\rho})) d\rho + G(f;r_1,r_{2}) \Bigg{)} \nonumber \, ,
		\end{align}
		where $G(f;r_1,r_2)$ is as in \eqref{eq:errtermfBV}. This proves \eqref{eq:finalmonotfBV} for all $f \in BV(\Omega) \cap C^1(\Omega)$. 
		
		Let now $f \in BV(\Omega)$. We can select a sequence $f_j \in BV(\Omega) \cap C^1(\Omega)$ such that
		\begin{equation} \label{eq:concl1}
			||f_j - f||_{L^1(\Omega)} \to 0 \, , \qquad |Df_j|(\Omega) \to |Df|(\Omega) \, , \qquad Df_j \mathop\weakto^\ast Df \, \, \, \, \, \text{in $\Omega \, .$}
		\end{equation}
		In particular, by the continuity of the trace with respect to the strict convergence, we have
		\begin{equation} \label{eq:concl2}
			||\Tr^{+}(f_j,\de \Omega) - \Tr^{+}(f,\de \Omega)||_{L^1(\de \Omega, \Hau^{n-1})} \to 0 \, .
		\end{equation}
		Let us consider the extensions
		\begin{equation*}
			f_{0,j}(x) = 
			\begin{cases}
				f_j(x) & \text{if $x \in \Omega$} \\
				0 & \text{if $x \in \R^n \setminus \Omega \, ,$}
			\end{cases}
			\qquad
			f_{0}(x) = 
			\begin{cases}
				f(x) & \text{if $x \in \Omega$} \\
				0 & \text{if $x \in \R^n \setminus \Omega \, .$}
			\end{cases}
		\end{equation*}
		We observe that, by \eqref{eq:concl1}, \eqref{eq:concl2},
		\begin{equation*}
			||f_{0,j} - f_{0}||_{L^1(\R^n)} \to 0 \, , \qquad |D f_{0,j}|(\R^n) \to |Df_{0}|(\R^n) \, .
		\end{equation*}
		By Proposition \ref{prop:convBVboost}, for almost all $0 < r < R$,
		\begin{equation} \label{eq:concl3}
			|D f_{0,j}|(B_r(V_r)) \to |D f_{0}|(B_r(V_r)) \, , \qquad ||\Tr^{+}(f_{0,j},\de B_r(V_r)) - \Tr^{+}(f_{0},\de B_r(V_r))||_{L^1(\de B_r(V_r))}\to 0 \, ,
		\end{equation}
		and in particular, owing to \eqref{eq:concl2},
		\begin{equation} \label{eq:concl3bis}
			|D f_j|(\Omega \cap B_r(V_r)) = |D f_{0,j}|(\Omega \cap B_r(V_r)) \to |Df_{0}|(\Omega \cap B_r(V_r)) = |Df|(\Omega \cap B_r(V_r)) \, .
		\end{equation}
		Now \eqref{eq:concl3}, \eqref{eq:concl3bis} allow us to apply Lemma \ref{lem:upbmingap}, and we deduce that
		\begin{equation*}
			|\Psi_\Omega(f_j;B_r(V_r)) - \Psi_\Omega(f;B_r(V_r))| \to 0 \, , \qquad \text{as $j \ra \infty \, ,$}
		\end{equation*}
		for almost all $0<r<R$. This implies that
		\begin{equation*}
			\int_{r_{1}}^{r_{2}} \dfrac{n-1}{\rho^{n}} \, \Psi_{\Omega}(f_j;B_{\rho}(V_{\rho})) d\rho \to \int_{r_{1}}^{r_{2}} \dfrac{n-1}{\rho^{n}} \, \Psi_{\Omega}(f;B_{\rho}(V_{\rho})) d\rho \, .
		\end{equation*}
		Finally, to conclude that the RHS of \eqref{eq:finalmonotf} for $f = f_j$, passes to the limit as $j \ra \infty$, giving precisely the RHS of \eqref{eq:finalmonotfBV}, it suffices to show that the terms 
		\begin{equation*}
			\int_{\Omega \cap \cA_{r_1,r_2}} \phi^{1-n} \, d |Df_j|\,, \quad  \int_{\Omega \cap B_{r_1}(V_{r_1})} (|\nabla \phi|-1) \, d |Df_j|\,, \quad \int_{\Omega \cap B_{r_2}(V_{r_2})} (|\nabla \phi|-1) \, d |Df_j| \,,
		\end{equation*}
converge as $j \ra \infty$ respectively to
		\begin{equation*}
			\int_{\Omega \cap \cA_{r_1,r_2}} \phi^{1-n} \, d |Df| \,,\quad \int_{\Omega \cap B_{r_1}(V_{r_1})} (|\nabla \phi|-1) \, d |Df|\,, \quad \int_{\Omega \cap B_{r_2}(V_{r_2})} (|\nabla \phi|-1) \, d |Df| \, .
		\end{equation*}
To see this, it suffices to construct a suitable partition of each domain, for instance using portions of circular annuli whose boundaries are negligible for $|Df|$ and $|Df_j|$ for all $j \geq 1$, to uniformly approximate each integrand by simple functions (up to removing a small neighborhood of $0$ in the case of the last two integrals). About the LHS of \eqref{eq:finalmonotf}, we observe that \eqref{eq:concl1} implies
		\begin{equation*}
			Df_j \mathop\weakto^\ast Df \qquad \text{in $\Omega \cap \cA_{r_1,r_2} \, .$}
		\end{equation*}
		Now, for $f$ smooth, the LHS of \eqref{eq:finalmonotfBV} and the LHS of \eqref{eq:finalmonotf} coincide. Moreover, we have
		\begin{equation*}
			\int_{\Omega \cap \cA_{r_{1},r_{2}}} \phi(x)^{1-n} \left|\left\langle \nu_f(x), \nabla \phi(x) \right\rangle \right| \, d |Df|(x) = \left| \phi^{1-n} \nabla \phi  \cdot Df_j \right|(\Omega \cap \cA_{r_{1},r_{2}}) \, .
		\end{equation*}
		In particular, \eqref{eq:concl1} implies that
		\begin{equation*}
			\phi^{1-n} \nabla \phi \cdot Df_j\ \mathop\weakto^\ast\ \phi^{1-n} \nabla \phi \cdot Df \, ,
		\end{equation*}
		and well-known properties of the weak-star convergence of Radon measures (see \cite{maggi2012sets}) ensure that
		\begin{equation*}
			\left| \phi^{1-n} \nabla \phi \cdot Df \right|(\Omega \cap \cA_{r_{1},r_{2}}) \leq \liminf_{j \ra \infty} \left| \phi^{1-n} \nabla \phi \cdot Df_j \right|(\Omega \cap \cA_{r_{1},r_{2}}) \, .
\end{equation*}
This implies \eqref{eq:finalmonotfBV} and concludes the proof of the theorem.
\end{proof}

The next corollary, a first important consequence of Theorem \ref{thm:MonotonicityInequality}, states the monotonicity of a suitable function of the radius $r$, which is defined by three terms: the renormalized perimeter $\mu_{E}(r)$, the integral of a renormalized minimality gap $\Psi_{\Omega}(E;B_{r}(V_{r}))$, and the visibility error $G(E,r)$. In particular, when $E$ is an almost-minimizer, the infinitesimality of the second and third terms implies that $\mu_{E}(r)$ is ``almost-increasing'', hence that it admits a finite limit as $r\to 0$. This limit represents the perimeter density of $E$ at $0$, see Remark \ref{rem:perdens-offcentric} below. 
\begin{corollary}[Boundary monotonicity for almost-minimizers]\label{cor:BM}
Let $\Omega \subset \R^n$ satisfy the visibility property. Let $E\subset \Omega$ be a local almost-minimizer, such that $\pr(E,B_{r})>0$ for all $r>0$ and 
\[
\int_{0}^{R}\rho^{-n}\Psi_{\Omega}(E;B_{\rho}(V_{\rho}))\, d\rho < +\infty\,.
\]
Then, using the same notation introduced at the beginning of the section, we can find $R'>0$ such that the function
\[
r\mapsto \mu_{E}(r) + (n-1)\int_{0}^{r}\rho^{-n}\Psi_{\Omega}(E;B_{\rho}(V_{\rho}))\, d\rho + G(E;r)
\]
is non-decreasing on $(0,R')$. Moreover, the two terms
$\int_{0}^{r}\rho^{-n}\Psi_{\Omega}(E;B_{\rho}(V_{\rho}))\, d\rho$ and $G(E;r)$ are infinitesimal as $r\to 0$, hence in particular $\mu_{E}(r)$ is ``almost-monotone'' and the limit 
\[
\theta_{E}(0) := \lim_{r\to 0^{+}}\mu_{E}(r)
\] 
exists and is finite. 
\end{corollary}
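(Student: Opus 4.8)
The plan is to read everything off from the monotonicity inequality of Theorem~\ref{thm:MonotonicityInequality} applied with $f=\ch_E$. Set
\[
h(r):=\mu_E(r)+(n-1)\int_0^r\rho^{-n}\,\Psi_\Omega(E;B_\rho(V_\rho))\,d\rho+G(E;r)\,,
\]
the quantity we must show is non-decreasing on $(0,R')$. The first step is the bookkeeping identity $G(E;r_1,r_2)=G(E;r_2)-G(E;r_1)$, immediate from \eqref{eq:errtermfBV}, \eqref{eq:estG} and $(n-1)\int_{r_1}^{r_2}\rho^{-n}\,d\rho=r_1^{1-n}-r_2^{1-n}$ (the terms $G(E;r_i)$ being finite by Proposition~\ref{prop:Gbound1}, since $\mu_E$ is bounded — see below); together with the definition of $\mu_E$ this makes the bracket on the right-hand side of \eqref{eq:finalmonotfBV} equal to $h(r_2)-h(r_1)$. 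Since the left-hand side of \eqref{eq:finalmonotfBV} is $\geq 0$ and the prefactor $2\int_{\Omega\cap\cA_{r_1,r_2}}\phi^{1-n}|\nabla\phi|\,d|Df|$ is $\geq 0$, one gets at once $h(r_2)\geq h(r_1)$ for almost every $0<r_1<r_2<R$ for which this prefactor is strictly positive, i.e.\ (since $\phi>0$ and $|\nabla\phi|>0$ on $\cA_{r_1,r_2}$ by Lemma~\ref{lem:coarea}) for which $|D\ch_E|(\Omega\cap\cA_{r_1,r_2})>0$.

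The main obstacle is the degenerate case $|D\ch_E|(\Omega\cap\cA_{r_1,r_2})=0$, where \eqref{eq:finalmonotfBV} gives no information; here I would argue directly. The balls $B_r(V_r)$ are sublevel sets of $\phi$, hence nested, with $B_{r_1}(V_{r_1})\Subset B_{r_2}(V_{r_2})$; and for $r$ small the open set $\Omega\cap\cA_{r_1,r_2}$ is connected, so $\ch_E$, having zero gradient measure there, is Lebesgue-a.e.\ constant on it; thus either $E\supset\Omega\cap\cA_{r_1,r_2}$ or $E\cap\Omega\cap\cA_{r_1,r_2}$ is Lebesgue-null. In either case, for $\rho\in(r_1,r_2)$ and a.e.\ $r_1$, one checks: $|D\ch_E|(\Omega\cap B_\rho(V_\rho))=|D\ch_E|(\Omega\cap B_{r_1}(V_{r_1}))=:p_0$; the $|\nabla\phi|-1$ integral in \eqref{eq:estG} does not depend on $\rho\in[r_1,r_2]$, whence $G(E;r_1,r_2)=0$; and the competitor $g=\ch_{E\cup(B_{r'_1}(V_{r'_1})\cap\Omega)}$ (respectively $g=\ch_{E\setminus B_{r'_1}(V_{r'_1})}$) with $r_1<r'_1<\rho$ is constant on $\Omega\cap B_\rho(V_\rho)$ and admissible in the definition of $\Psi_\Omega(E;B_\rho(V_\rho))$, so $\Psi_\Omega(E;B_\rho(V_\rho))=p_0$. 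Inserting these in the bracket gives $h(r_2)-h(r_1)=p_0(r_2^{1-n}-r_1^{1-n})+p_0(r_1^{1-n}-r_2^{1-n})=0$. Hence $h(r_2)\geq h(r_1)$ for a.e.\ pair $r_1<r_2$. To pass from this to monotonicity on all of $(0,R')$, I would observe that $h(r)=r^{1-n}\int_{\Omega\cap\{\phi<r\}}|\nabla\phi|\,d|Df|+(\text{a function absolutely continuous in }r)$, so $h$ is left-continuous with at most countably many jumps, all upward; combined with the a.e.\ inequality this forces $h$ non-decreasing on $(0,R')$.

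Next I would show the two remainder terms in $h$ are infinitesimal. Since $\pr(E;B_r)>0$ for all $r>0$, Lemma~\ref{lem:density} applies at $0$ and, comparing $B_r(V_r)$ with $B_{r\pm v(r)}$ (using $v(r)=o(r)$), gives $c_0\leq\mu_E(r)\leq c_1$ for small $r$, with $c_0,c_1>0$. The upper bound makes Proposition~\ref{prop:Gbound1} applicable, so $G(E;r)$ is finite; and the estimate from its proof, $\bigl|\int_{\Omega\cap B_\rho(V_\rho)}(|\nabla\phi|-1)\,d|Df|\bigr|\leq 4\rho\,\gamma_v(\rho)\,|D\ch_E|(\Omega\cap B_\rho(V_\rho))$, combined with the summability of $\gamma_v$ and $\rho\gamma_v(\rho)\to 0$ (Remark~\ref{rem:V2strong}), yields $|G(E;r)|\lesssim\int_0^r\gamma_v(\rho)\,d\rho+r\gamma_v(r)\to 0$ as $r\to 0^+$. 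The term $\int_0^r\rho^{-n}\Psi_\Omega(E;B_\rho(V_\rho))\,d\rho$ tends to $0$ as the tail of the convergent integral assumed in the hypothesis.

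Finally, since $h$ is non-decreasing on $(0,R')$, the limit $\lim_{r\to 0^+}h(r)$ exists in $[-\infty,+\infty)$, and it is finite because $h(r)\geq\mu_E(r)-|G(E;r)|-(n-1)\int_0^r\rho^{-n}\Psi_\Omega(E;B_\rho(V_\rho))\,d\rho\geq c_0-o(1)>0$ for $r$ small. From $\mu_E(r)=h(r)-(n-1)\int_0^r\rho^{-n}\Psi_\Omega(E;B_\rho(V_\rho))\,d\rho-G(E;r)$ and the vanishing of the last two summands we conclude that $\theta_E(0):=\lim_{r\to 0^+}\mu_E(r)$ exists and is finite, and that $\mu_E$ is ``almost-monotone'' in the stated sense. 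As indicated, the delicate points are the degenerate case and the upgrade from the a.e.\ inequality to monotonicity on the whole interval; the rest is routine bookkeeping with the visibility estimates.
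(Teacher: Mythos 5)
Your overall route is the same as the paper's: rewrite the bracket on the right of \eqref{eq:finalmonotfBV} as $h(r_2)-h(r_1)$ (the identity $G(E;r_1,r_2)=G(E;r_2)-G(E;r_1)$ is correct once Proposition~\ref{prop:Gbound1} gives finiteness), get $h(r_2)\ge h(r_1)$ from the sign of the two factors, and kill the two correction terms using Lemma~\ref{lem:density}, the estimate $|G(E;r)|\lesssim\int_0^r\gamma_v+r\gamma_v(r)$ and the summability hypothesis. In fact you are more careful than the paper on two points it passes over in silence: the upgrade from the a.e.\ inequality of Theorem~\ref{thm:MonotonicityInequality} to monotonicity on all of $(0,R')$ (your decomposition $h(r)=r^{1-n}\int_{\Omega\cap\{\phi<r\}}|\nabla\phi|\,d|D\ch_E|+\text{(absolutely continuous)}$ and the left-continuity argument are correct), and the degenerate case in which $|D\ch_E|(\Omega\cap\cA_{r_1,r_2})=0$, where \eqref{eq:finalmonotfBV} reads $0\le0$ and gives no sign information on the bracket, even though $\mu_E$ genuinely decreases there; your computation that in this case $G(E;r_1,r_2)=0$ and that $\Psi_\Omega(E;B_\rho(V_\rho))\ge p_0$ makes the bracket vanish is exactly the right compensation to look for.

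The gap is in how you justify that compensation: you assert that ``for $r$ small the open set $\Omega\cap\cA_{r_1,r_2}$ is connected'', and the visibility property does not give this. Take $n=2$, $u\equiv0$ and $\omega(s)=|s|\,q(|s|)$ with $q\ge0$ non-increasing: then $\R^2\setminus\Omega$ is star-shaped with respect to $0$, so visibility holds (Example~\ref{ex:Lipcones}), yet $d(s)=s\sqrt{1+q(s)^2}$ need not be monotone, and a short steep decrease of $q$ near a scale $a$ makes the graph exit and re-enter the circles of radii close to $a\sqrt{1+q(a)^2}$; for such radii $\Omega\cap\cA_{r_1,r_2}$ (and even $\Omega\cap B_r$) has extra ``pocket'' components. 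Inserting such dips at a sequence of scales $a_k\to0$ (each costing only a small, summable decrement of $q$, so $\omega$ stays Lipschitz) produces disconnected annuli at arbitrarily small scales, so no $R'$ as in your claim exists, and then $\ch_E$ need not be a.e.\ constant on $\Omega\cap\cA_{r_1,r_2}$ and your fill/empty competitor is no longer admissible with zero relative perimeter. The degenerate case can still be closed, but by a different competitor: use the off-centric conical competitor of the proof of Theorem~\ref{thm:MonotonicityInequality} at a radius $\rho'$ slightly below $\rho$. For a.e.\ such $\rho'$ the trace of $E$ on $\Omega\cap\de B_{\rho'}(V_{\rho'})$ has no jump set inside that spherical patch (no perimeter in the annulus), and the cone over the remaining edge $\de\Omega\cap\de B_{\rho'}(V_{\rho'})$ does not meet $\Omega$ precisely by (V3'); hence the competitor has zero relative perimeter in $\Omega\cap B_\rho(V_\rho)$ and $\Psi_\Omega(E;B_\rho(V_\rho))\ge p_0$ follows without any connectivity assumption. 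With that replacement (the rest of your argument unchanged) the proof is complete; as written, the connectivity step is a genuine unproved claim that is false for general visible Lipschitz domains.
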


\begin{proof}[Proof of Corollary \ref{cor:BM}]
By Lemma \ref{lem:density} and the fact that $B_{r}(V_{r}) \subset B_{r+v(r)}$, we can find constants $C,R>0$ such that
\begin{equation*}
\dfrac{|Df|(\Omega \cap B_{r}(V_{r}))}{r^{n-1}} \leq \dfrac{|Df|(\Omega \cap B_{r + v(r)})}{(r + v(r))^{n-1}} \left( 1 + \dfrac{v(r)}{r} \right)^{n-1} \leq C\,,\qquad \text{for all $0<r<R$.}
\end{equation*}
By combining Proposition \ref{prop:Gbound1} with \eqref{eq:Dphi-1} and the previous bound, up to redefining the constants $C,R>0$, we obtain 
\begin{equation}\label{eq:stimaG1}
|G(E;r)| \le C\left(\int_{0}^{r}\gamma_{v}(\rho)\, d\rho + r\gamma_{v}(r)\right)\,,\qquad \text{for all $0<r<R$.}
\end{equation}
Finally, the proof of the corollary follows directly from Theorem \ref{thm:MonotonicityInequality} and from the observation that the RHS of \eqref{eq:stimaG1} is infinitesimal as $r\to 0^{+}$. 
\end{proof}

\begin{remark}\label{rem:perdens-offcentric}
It is easy to check that, under the assumptions of Corollary \ref{cor:BM}, one has
\begin{equation*}
\exists\, \lim_{r\to 0^{+}}\frac{\pr(E;B_{r})}{r^{n-1}} = \theta_{E}(0)\,.
\end{equation*}
Indeed, this is an immediate consequence of the inclusions
\[
B_{r-v(r)}(V_{r}) \subset B_{r} \subset B_{r+v(r)}(V_{r})
\]
combined with $v(r) = o(r)$ as $r\to 0$.
\end{remark}

\section{Blow-up limits of almost-minimizers are cones}
We now apply Theorem \ref{thm:MonotonicityInequality} and prove that any blow-up limit of a local almost-minimizer $E$ of $\pr$ is a perimeter-minimizing cone. 
\begin{theorem} \label{thm:conblowup}
Let $\Omega \subset \R^{n}$ satisfy the visibility property. Let $E \subset \Omega$ be a local almost-minimizer in $\Omega$ such that
\begin{equation*} 
\int_{0}^{R} \dfrac{\Psi_{\Omega}(E;B_r)}{r^n} dr < \infty \, .
\end{equation*}
Fix a decreasing sequence $t_{j} \to 0$ and set $E_{t_{j}} = t_{j}^{-1}E$. Then, up to subsequences, $E_{t_{j}}$ converges to $E_{0} \subset \Omega_{0}$ in $L^{1}_{\loc}(\R^{n})$. Moreover, $E_{0}$ is a nontrivial cone minimizing the relative perimeter in $\Omega_{0}$.
\end{theorem}
\begin{proof}
Set $E_{j}=E_{t_{j}}$ and $\Omega_{j} = \Omega_{t_{j}}$ for more simplicity. Then by the upper density estimate on the relative perimeter of $E$ (Lemma \ref{lem:density}) coupled with analogous estimates satisfied by $\Omega$ Lipschitz, we can find a constant $C>0$ such that, for every fixed $R>0$, 
\begin{align*}
\p(E_{j}; B_{R}) &\le \p(\Omega_{j}; B_{R}) + \p_{\Omega_{j}}(E_{j};B_{R})\\
&= t_{j}^{1-n}\big(\p(\Omega;B_{Rt_{j}}) + \pr(E;B_{Rt_{j}})\big)\\
&\le CR^{n-1}\,.
\end{align*}
By the compactness property of sequences of sets with uniformly bounded relative perimeter the ball $B_{R}$, we conclude that there exists a not relabeled subsequence $E_{j}$ and a set $E_{0}$ of finite perimeter in $B_{R}$, such that $E_{j}\to E_{0}$ in $L^{1}(B_{R})$ as $j\to \infty$. The fact that $E_{0} \subset \Omega_{0}$ up to null sets is immediate, since $E_{j} \subset \Omega_{j}$, for all $j$, and the sequence $\Omega_{j}$ converges to the tangent cone $\Omega_{0}$ locally in Hausdorff distance (hence, in $L^{1}_{\loc}(\R^{n})$) thanks to Proposition \ref{prop:tangcone}. Up to a standard diagonal argument we can assume that the subsequence $E_{j}$ converges to $E_{0}$ in $L^{1}_{\loc}(\R^{n})$. Moreover by the lower-density estimates on the volume of $E$ we also deduce that $E_{0}$ can be neither the empty set, nor the whole $\Omega_{0}$ up to null sets (that is, $E_{0}$ is nontrivial).

By the scaling properties of the perimeter, for any fixed $R>0$ we have
\begin{align*}
\Psi_{\Omega_{t_j}}(E_{t_{j}};B_{R}) = \dfrac{1}{t_{j}^{n-1}} \Psi_{\Omega}(E;B_{t_{j} R}) \leq
\omega_{n}^{1 - \frac{1}{n}} \, R^{n-1} \psi_{\Omega}(E; 0, t_{j} R) \longrightarrow 0 \, , 
\end{align*}
therefore we can apply Lemma \ref{lem:gaplsc} and deduce that 
\[
\Psi_{\Omega_{0}}(E_{0};B_{R}) \le \liminf_{j}\Psi_{\Omega_{t_{j}}}(E_{t_{j}};B_{R}) = 0
\] 
for all $R>0$ and, also owing to Corollary \ref{cor:BM},
\begin{align*}
\p_{\Omega_{0}}(E_{0};B_{R}) &= \lim_{j}\p_{\Omega_{t_{j}}}(E_{t_{j}};B_{R}) = \lim_{j}t_{j}^{1-n}\pr(E;B_{t_{j}R}\cap \Omega) = R^{n-1}\theta_{E}(0)\,.
\end{align*}
Thus, $E_{0}$ is a minimizer for the relative perimeter in the cone $\Omega_{0}$, such that 
\[
\frac{\p(E_{0};B_{R}\cap \Omega_{0})}{R^{n-1}} = \theta_{E}(0)\qquad \text{for all $R>0$.}
\]
Now, the monotonicity inequality \eqref{eq:finalmonotfBV} written for $f = \ch_{E_{0}}$ and $\Omega=\Omega_{0}$ takes the form
\begin{align*}
\Bigg( \int_{\Omega_{0} \cap (B_{r_{2}} \setminus B_{r_{1}})}& \dfrac{| \left\langle \nu_{E_{0}}(x) , x \right\rangle|}{|x|^{n}} \, d |D\ch_{E}|(x) \Bigg)^{2}\\
& \leq \left( \int_{\Omega_{0} \cap (B_{r_{2}} \setminus B_{r_{1}})} |x|^{1-n} \, d |D\ch_{E_{0}}(x)| \right) \cdot \left( \dfrac{\p_{\Omega_{0}}(E_{0}; B_{r_{2}})}{r_{2}^{n-1}}  - \dfrac{\p_{\Omega_{0}}(E_{0}; B_{r_{1}})}{r_{1}^{n-1}} \right) = 0 \, ,
\end{align*}
for almost all $0 < r_{1} < r_{2}$. The only possibility is then that $\langle \nu_{E_{0}}(x), x \rangle = 0$ at $\Hau^{n-1}$-almost every $x \in \de^{\ast} E_{0}$. By \cite[Proposition 28.8]{maggi2012sets} we infer that $E_{0}$ is a cone with vertex at the origin, up to negligible sets, and the proof is concluded.
\end{proof}


\begin{thebibliography}{10}

\bibitem{allard1972first}
William~K Allard.
\newblock On the first variation of a varifold.
\newblock {\em Annals of mathematics}, 95(3):417--491, 1972.

\bibitem{allard1975boundary}
William~K Allard.
\newblock On the first variation of a varifold: boundary behavior.
\newblock {\em Annals of Mathematics}, 101(3):418--446, 1975.

\bibitem{almgren1968existence}
Frederick~J Almgren.
\newblock Existence and regularity almost everywhere of solutions to elliptic
  variational problems among surfaces of varying topological type and
  singularity structure.
\newblock {\em Annals of Mathematics}, 87(2):321--391, 1968.

\bibitem{almgren2000BigRegPaper}
Frederick~J Almgren, Vladimir Scheffer, and Jean~E Taylor.
\newblock {\em Almgren's big regularity paper: Q-valued functions minimizing
  Dirichlet's integral and the regularity of area-minimizing rectifiable
  currents up to codimension 2}, volume~1.
\newblock World scientific, 2000.

\bibitem{Ambrosio_Introduttivo_00}
Luigi Ambrosio.
\newblock {\em Corso introduttivo alla teoria geometrica della misura ed alle
  superfici minime}.
\newblock Appunti dei Corsi Tenuti da Docenti della Scuola. [Notes of Courses
  Given by Teachers at the School]. Scuola Normale Superiore, Pisa, 1997.

\bibitem{AmbrosioFuscoPallara2000}
Luigi Ambrosio, Nicola Fusco, and Diego Pallara.
\newblock {\em Functions of bounded variation and free discontinuity problems}.
\newblock Oxford Mathematical Monographs. The Clarendon Press, Oxford
  University Press, New York, 2000.

\bibitem{BombieriDeGiorgiGiusti1969}
E.~Bombieri, E.~De~Giorgi, and E.~Giusti.
\newblock Minimal cones and the {B}ernstein problem.
\newblock {\em Invent. Math.}, 7:243--268, 1969.

\bibitem{Bombieri1982}
Enrico Bombieri.
\newblock Regularity theory for almost minimal currents.
\newblock {\em Arch. Rational Mech. Anal.}, 78(2):99--130, 1982.

\bibitem{Bourni2016}
Theodora Bourni.
\newblock Allard-type boundary regularity for {$C^{1,\alpha}$} boundaries.
\newblock {\em Adv. Calc. Var.}, 9(2):143--161, 2016.

\bibitem{DeGiorgi1961}
Ennio De~Giorgi.
\newblock {\em Frontiere orientate di misura minima}.
\newblock Editrice Tecnico Scientifica, Pisa, 1961.
\newblock Seminario di Matematica della Scuola Normale Superiore di Pisa,
  1960-61.

\bibitem{DeLellisSpadaro2014regularity-I}
Camillo De~Lellis and Emanuele Spadaro.
\newblock Regularity of area minimizing currents i: gradient l p estimates.
\newblock {\em Geometric and Functional Analysis}, 24(6):1831--1884, 2014.

\bibitem{DeLellisSpadaro2016regularity-II}
Camillo De~Lellis and Emanuele Spadaro.
\newblock Regularity of area minimizing currents ii: center manifold.
\newblock {\em annals of Mathematics}, pages 499--575, 2016.

\bibitem{DeLellisSpadaro2016regularity-III}
Camillo De~Lellis and Emanuele Spadaro.
\newblock Regularity of area minimizing currents iii: blow-up.
\newblock {\em annals of Mathematics}, pages 577--617, 2016.

\bibitem{DuzaarSteffen2002}
Frank Duzaar and Klaus Steffen.
\newblock Optimal interior and boundary regularity for almost minimizers to
  elliptic variational integrals.
\newblock {\em J. Reine Angew. Math.}, 546:73--138, 2002.

\bibitem{EdelenLi2022}
Nicholas Edelen and Chao Li.
\newblock Regularity of free boundary minimal surfaces in locally polyhedral
  domains.
\newblock {\em Comm. Pure Appl. Math.}, 75(5):970--1031, 2022.

\bibitem{EvansGariepy2015}
Lawrence~C. Evans and Ronald~F. Gariepy.
\newblock {\em Measure theory and fine properties of functions}.
\newblock Textbooks in Mathematics. CRC Press, Boca Raton, FL, revised edition,
  2015.

\bibitem{federer1960normal}
Herbert Federer and Wendell~H Fleming.
\newblock Normal and integral currents.
\newblock {\em Annals of Mathematics}, 72(3):458--520, 1960.

\bibitem{Giu84book}
Enrico Giusti.
\newblock {\em Minimal surfaces and functions of bounded variation}, volume~80
  of {\em Monographs in Mathematics}.
\newblock Birkh\"{a}user Verlag, Basel, 1984.

\bibitem{GonzalezMassariTamanini1983}
E.~Gonzalez, U.~Massari, and I.~Tamanini.
\newblock On the regularity of boundaries of sets minimizing perimeter with a
  volume constraint.
\newblock {\em Indiana Univ. Math. J.}, 32(1):25--37, 1983.

\bibitem{Gruter1987regularity}
Michael Gr{\"u}ter.
\newblock Regularity results for minimizing currents with a free boundary.
\newblock {\em J. Reine Angew. Math.}, (375--376):307--325, 1987.

\bibitem{GruterJost1986allard}
Michael Gr\"{u}ter and J\"{u}rgen Jost.
\newblock Allard type regularity results for varifolds with free boundaries.
\newblock {\em Ann. Scuola Norm. Sup. Pisa Cl. Sci. (4)}, 13(1):129--169, 1986.

\bibitem{HardtSimon1979}
Robert Hardt and Leon Simon.
\newblock Boundary regularity and embedded solutions for the oriented {P}lateau
  problem.
\newblock {\em Ann. of Math. (2)}, 110(3):439--486, 1979.

\bibitem{Jost1986}
J\"{u}rgen Jost.
\newblock On the regularity of minimal surfaces with free boundaries in
  {R}iemannian manifolds.
\newblock {\em Manuscripta Math.}, 56(3):279--291, 1986.

\bibitem{maggi2012sets}
Francesco Maggi.
\newblock {\em Sets of finite perimeter and geometric variational problems},
  volume 135 of {\em Cambridge Studies in Advanced Mathematics}.
\newblock Cambridge University Press, Cambridge, 2012.
\newblock An introduction to geometric measure theory.

\bibitem{Massari1974}
Umberto Massari.
\newblock Esistenza e regolarit\`a{} delle ipersuperfice di curvatura media
  assegnata in {$R\sp{n}$}.
\newblock {\em Arch. Rational Mech. Anal.}, 55:357--382, 1974.

\bibitem{reifenberg1964analyticity}
Ernst~Robert Reifenberg.
\newblock On the analyticity of minimal surfaces.
\newblock {\em Annals of Mathematics}, 80(1):15--21, 1964.

\bibitem{Rockafellar1998}
R.~Tyrrell Rockafellar and Roger J.-B. Wets.
\newblock {\em Variational analysis}, volume 317 of {\em Grundlehren der
  mathematischen Wissenschaften [Fundamental Principles of Mathematical
  Sciences]}.
\newblock Springer-Verlag, Berlin, 1998.

\bibitem{SchoenSimon1981}
Richard Schoen and Leon Simon.
\newblock Regularity of stable minimal hypersurfaces.
\newblock {\em Comm. Pure Appl. Math.}, 34(6):741--797, 1981.

\bibitem{simon1984lectures}
Leon Simon et~al.
\newblock Lectures on geometric measure theory.
\newblock Centre for Mathematical Analysis, Australian National University
  Canberra, 1984.

\bibitem{tamanini1982boundaries}
Italo Tamanini.
\newblock Boundaries of {C}accioppoli sets with {H}\"{o}lder-continuous normal
  vector.
\newblock {\em J. Reine Angew. Math.}, 334:27--39, 1982.

\bibitem{Tamanini_quadLecce}
Italo Tamanini.
\newblock Regularity results for almost minimal oriented hypersurfaces in
  {$R^n$}.
\newblock {\em Quaderni del Dipartimento di Matematica dell'Universit\`a di
  Lecce}, 1984.

\end{thebibliography}

\end{document}